\providecommand{\tikzsetnextfilename}[1]{}
    \definecolor{umons-red}{cmyk}{0., 1., 0.6, 0.2}
    \definecolor{umons-gray}{cmyk}{0, 0, 0, 0.5}
    \definecolor{umons-turquoise}{cmyk}{0.9, 0., 0.2, 0.}
    \definecolor{n2}{RGB}{220,76,76}
    \definecolor{n3}{RGB}{220,157,76}
    \definecolor{n4}{RGB}{94,172,59}
    \definecolor{n5}{RGB}{76,186,220}
    \definecolor{n6}{RGB}{64,85,255}
    \definecolor{n7}{RGB}{116,76,220}
    \definecolor{b2+}{named}{umons-red}%
    \definecolor{b2-}{named}{b2+}
    \definecolor{b3+}{RGB}{50, 90, 235}%
    \definecolor{b3-}{named}{b3+}
    \definecolor{b4+}{RGB}{10, 170, 15}%
    \definecolor{b4-}{named}{b4+}
    \definecolor{b5+}{RGB}{215, 155, 70}%
    \definecolor{b5-}{named}{b5+}
    \definecolor{b6+}{RGB}{195, 30, 230}%
    \definecolor{b6-}{named}{b6+}
    \definecolor{b7+}{RGB}{98, 137, 156}%
    \definecolor{b7-}{named}{b7+}
    \definecolor{b8+}{RGB}{189, 212, 147}%
    \definecolor{b8-}{named}{b8+}
    \definecolor{b9+}{RGB}{170, 140, 130}%
    \definecolor{b9-}{named}{b9+}
\providecommand{\texorpdfstring}[2]{#1}
  \newcommand{\graphpath}{graphs/}
  \newcommand{\graphpath}{}}
\theoremstyle{plain}
\newtheorem{Prop}{Proposition}[section]
\newtheorem{Thm}[Prop]{Theorem}
\newtheorem{Lem}[Prop]{Lemma}
\newtheorem{Cor}[Prop]{Corollary}
\newtheorem{Conj}[Prop]{Conjecture}
\theoremstyle{definition}
\newtheorem{Def}[Prop]{Definition}
\theoremstyle{remark}
\newtheorem{Rem}[Prop]{Remark}
\numberwithin{equation}{section}
\newcommand{\IN}{\mathbb{N}}
\newcommand{\IR}{\mathbb{R}}
\newcommand{\IS}{\mathbb{S}}
\newcommand{\C}{\mathcal{C}}
\newcommand{\N}{{{\mathcal N}}}
\newcommand{\E}{{{\mathcal E}}}
\newcommand{\abs}[1]{\mathopen\vert#1\mathclose\vert}
\newcommand{\bigabs}[1]{\bigl| #1\bigr|}
\newcommand{\norm}[1]{\mathopen\Vert#1\mathclose\Vert}
\newcommand{\intervalcc}[1]{\mathopen[#1\mathclose]}
\newcommand{\intervaloc}[1]{\mathopen]#1\mathclose]}
\newcommand{\intervalco}[1]{\mathopen[#1\mathclose[}
\newcommand{\intervaloo}[1]{\mathopen]#1\mathclose[}
\newcommand{\bigintervaloo}[1]{\bigl]#1\bigr[}
\newcommand{\intd}{\,{\mathrm d}}
\renewcommand{\phi}{\varphi}
\let\epsilon=\varepsilon
  \renewcommand{\le}{\leqslant}%
  \renewcommand{\leq}{\leqslant}}
  \renewcommand{\ge}{\geqslant}%
  \renewcommand{\geq}{\geqslant}}
\renewcommand{\subset}{\subseteq}
\newcommand{\spanned}[1]{\mathop{\operator@font span}\{#1\}}
\newcommand{\closure}[1]{\overline{#1}}
\newcommand{\limplies}{\Rightarrow}
\newcommand{\liff}{\Leftrightarrow}
\DeclareMathOperator{\id}{\mathbbm{1}}
\DeclareMathOperator{\Ran}{Im}
\DeclareMathOperator{\codim}{codim}
\DeclareMathOperator{\dist}{dist}
\DeclareMathOperator{\e}{e}
\DeclareMathOperator{\sign}{sign}
\DeclareMathOperator{\arcsinh}{arcsinh}
\DeclareMathOperator{\order}{o}
\newcommand{\MIrad}{\operatorname{MI_{rad}}}
\newcommand{\Underline}[1]{\underline{\smash{#1}}}
\newcommand{\lambdarad}{\lambda^{\text{\upshape rad}}}
\renewcommand{\paragraph}{%
  \@startsection{paragraph}{4}{\z@}{1ex}{-\fontdimen 2\font }%
  {\normalfont \itshape \bfseries}}
\begin{document}

\title[Semilinear elliptic problems with Neumann boundary conditions]{%
Multiple radial positive solutions of semilinear elliptic problems
with Neumann boundary conditions}
\author[D.~Bonheure, C.~Grumiau, C.~Troestler]{Denis Bonheure \and
  Christopher Grumiau \and Christophe Troestler}

\address{
  D\'epartement de Math{\'e}matique\\
  Universit\'e libre de Bruxelles, CP 214\\
  Boulevard du Triomphe, B-1050 Bruxelles, Belgium}
\email{denis.bonheure@ulb.ac.be}

\address{
  Institut Complexys\\
  D\'epartement  de Math\'ematique\\
  Universit\'e de Mons\\
  20, Place du Parc, B-7000 Mons, Belgium}
\email{Christopher.Grumiau@umons.ac.be}
\email{Christophe.Troestler@umons.ac.be}

\begin{abstract}
Assuming $B_{R}$ is a ball in $\mathbb R^{N}$, 
we analyze the positive solutions of the problem
\begin{equation*}
\left\{
    \begin{aligned}
      -\Delta u+u&= \abs{ u}^{p-2}u,&&\text{ in } B_{R}, \\
      \partial_{\nu}u&=0,&&\text{ on } \partial B_{R},
    \end{aligned}
  \right.
\end{equation*}
that branch out from the constant solution $u=1$ as $p$ grows from $2$
to $+\infty$.
The non-zero constant positive solution is the unique positive
solution for $p$ close to $2$.  We show that there exist arbitrarily
many positive solutions as $p\to\infty$ (in particular, for
supercritical exponents) or as $R \to \infty$ for any fixed value of
$p>2$, answering partially a conjecture in \cite{BonheureNorisWeth}. We give the explicit lower bounds for $p$ and $R$ so that
a given number of solutions exist.
The geometrical properties of those solutions
are studied and illustrated numerically.  Our simulations
motivate additional conjectures.  The structure of the least energy
solutions (among all or only among radial solutions) and other related
problems are also discussed.
\end{abstract}

\keywords{Neumann boundary conditions, bifurcation, subcritical and
supercritical exponent, Lane Emden problem, boundary  value problems, Nehari
manifold, symmetries, clustered layer solutions}

\thanks{Work performed within the programs
  ``Qualitative study of solutions of variational elliptic partial
differerential equations. Symmetries, bifurcations, singularities,
  multiplicity and numerics'', project 2.4.550.10.F,
  and ``Existence and asymptotic behavior of solutions to systems of semilinear
  elliptic partial differential equations'',
  project T.1110.14,
  of the FNRS,
  Fonds de la Recherche Fondamentale Collective.
  The first author is supported by INRIA~-- Team MEPHYSTO,
MIS F.4508.14 (FNRS)  \& ARC AUWB-2012-12/17-ULB1-IAPAS.
The last two authors are also partially supported by a grant from the national
bank of Belgium.
}

\maketitle
\tableofcontents

\section{Introduction} \label{Section-Intro}

In this paper, we consider the
semilinear elliptic problem 
\begin{equation*}
\tag{\protect{$\mathcal{P}_{\lambda,p}$}} \label{pblP} \left\{
\begin{aligned}
-\Delta u+\lambda u&= \abs{ u}^{p-2}u,&&\text{ in } \Omega, \\
u&>0, &&\text{ in } \Omega, \\
\partial_{\nu}u&=0,&&\text{ on } \partial \Omega,
\end{aligned}
\right.
\end{equation*}
where $\Omega$ is a smooth bounded domain in $\IR^N$, $N\geq 3$, $\lambda>0$,
$p>2$ and $\partial_\nu$ denotes the outward normal
derivative. 
This problem, sometimes referred to as the Lane-Emden equation with
Neumann boundary conditions, arises for instance in mathematical
models which aim to study pattern formation, and more specifically in
those governed by diffusion and cross-diffusion
systems~\cite{ni98}. The problem is also related to the stationary
Keller-Segel system in
chemotaxis~\cite{keller-segel,Gierer-Meinhardt,keller-segel2,Lin-Ni-Takagi88}.

As \eqref{pblP} admits a constant solution, the
solvability of \eqref{pblP} differs from
the case of positive solutions of the Lane-Emden equation with
Dirichlet boundary conditions
\begin{equation}\label{Diri}
  \left\{
    \begin{aligned}
      -\Delta u &= \abs{u}^{p-2}u,&&\text{ in } \Omega, \\
      u&>0, &&\text{ in } \Omega, \\
      u&=0,&&\text{ on } \partial \Omega,
    \end{aligned}
  \right.
\end{equation}
for which it is well known that, if $\Omega$ is starshaped and $N\ge
3$, existence is restricted to the subcritical range
\begin{equation}
  \label{eq:6}
  p<2^{*}:=\frac{2N}{N-2}
\end{equation}
as a consequence of Pohozaev's identity
(see \cite{Pohozaev}). 
In the sequel of the paper we set $2^{*}=+\infty$ if $N=2$.

The subcriticality assumption (\ref{eq:6}) allows to tackle the
problem \eqref{pblP} with variational methods, i.e.,
the equation arises as the Euler-Lagrange equation of the energy functional
\begin{equation*}
  \mathcal{E}_{\lambda,p}:H^1(\Omega)\to\IR:u\mapsto\frac{1}{2}\int_{\Omega}
  \abs{\nabla u}^2+\lambda u^2-\frac{1}{p}\int_{\Omega}\abs{u}^p.
\end{equation*}
Moreover, due to the compact embedding
$H^1(\Omega) \hookrightarrow L^{p}(\Omega)$, the existence of a
solution to~\eqref{pblP}
follows by standard arguments. 
Indeed, it is enough to minimize $\E_{\lambda,p}$ on the Nehari manifold
\begin{equation*}
  \mathcal{N}_{\lambda,p}
  := \bigl\{u\in H^1\setminus\{0\} :  \E_{\lambda,p}'(u)[u]=0 \bigr\}
\end{equation*}
and to observe that the minimizer is nonnegative whereas the strong
maximum principle implies its positivity. The minimizers are called
least energy or ground state solutions. Looking at the quadratic form
$ \E_{\lambda,p}''(u_{0})[u,u]$, it is easily seen that any minimizer
$u_{0}$ is non constant if\footnote{In this paper $\lambda_{i}(\Omega)$
  ($i \ge 1$) stands for the $i$\thinspace th eigenvalue of $-\Delta$
  with Neumann
  boundary conditions on~$\partial\Omega$.}
$\lambda(p-2) >\lambda_{2}(\Omega)$.  On the other hand, if $\lambda$
is small, the only minimizer is the constant solution as Lin, Ni and
Takagi~\cite{Lin-Ni-Takagi88} proved that uniqueness holds for
\eqref{pblP} for $\lambda$ small.

In contrast with the nonexistence result for \eqref{Diri},
the energy functional for the critical exponent, $\E_{\lambda,2^{*}}$,
achieves its minimum on $\N_{\lambda,2^{*}}$. Moreover, Wang
\cite{wang91} proved that when $\lambda$ is sufficiently large, the
constant solution cannot be a minimizer.

For $\lambda$ small and $p=2^{*}$, Lin and Ni \cite{Lin-Ni}
conjectured that the constant
solution must be the unique solution. The conjecture was studied by
Adimurthi and Yadava \cite{Adimurthi-Yadava91, Adimurthi-Yadava97}
and Budd, Knapp and Peletier~\cite{Budd-Knapp-Peletier}
in the case of radial solutions when $\Omega$ is a ball. It happens
that in this case, the conjecture is true in dimension $N=3$ or
$N\ge 7$, while it is false in dimension $N=4, 5, 6$.
The conjecture was further extended to convex domains in dimension
$N=3$ and has lead to many developments in the recent years.
We refer to \cite{wangweiyan} and to the references therein for
further details.




In the supercritical range, namely when $p>2^{*}$, most of the
previous works on the existence of solutions of \eqref{pblP} are
devoted to perturbative cases where either $\lambda\to +\infty$ or a
slightly supercritical exponent $2^{*}+\varepsilon$ is considered, see
e.g. \cite{delpinomussopistoia,reywei1,reywei2}. By scaling, it is
easily seen that the case $\lambda\to +\infty$ amounts to consider a
small diffusion coefficient $\varepsilon$
(in front of $-\Delta$), see below. In this setting, it is physically
relevant to study the existence of solutions which concentrates
around a single or multiple points or even around some curve or a higher
dimensional manifold as $\varepsilon\to 0$, see for
example~\cite[Chapter~9 and~10]{Ambrosetti-Malchiodi}, \cite{nitakagi1,nitakagi2,MR2056434,Malchiodi-Ni-Wei05,MR3262455,MR2296306,MR2221246,MR2072213} and the references therein.

In this paper, we deal with \eqref{pblP} in a ``non-perturbative way'' and
therefore our contribution is more closely related to the recent works
\cite{Barutello-Secchi-Serra,BonheureSerra,GrossiNoris,SerraTilli,BonheureNorisWeth}. It
was observed in \cite{SerraTilli} that when $\Omega=B_{R}$ is a ball
of radius $R>0$, compactness can be recovered in the supercritical
case by considering the subspace of radially
\emph{increasing} functions of $H^1_{\text{rad}}(B_{R})$,
where $H^1_{\text{rad}}(B_{R})$ is the space of
functions of $H^1(B_{R})$ invariant under the action of the group
$O(N)$. This fact was used in \cite{BonheureNorisWeth} to prove the
existence of a non-constant radially increasing solution of
\eqref{pblP} in the supercritical regime, i.e., without assuming
\eqref{eq:6}, under the assumption that\footnote{In this paper,
  $\lambdarad_{i}(B_{R})$
  stands for the $i$\thinspace th eigenvalue of the operator $-\Delta$
  restricted to radial functions on $B_{R}$, with Neumann
  boundary conditions on~$\partial B_{R}$.}
$$\lambda(p-2)>\lambdarad_{2}(B_{R}).$$
In the critical case, the existence of such a radially
increasing solution has been proved using a shooting argument and the
Emden-Fowler transformation in~\cite{Adimurthi-Yadava91} under the
same assumption
\begin{equation*}
  \lambda(2^{*}-2) > \lambdarad_{2}(B_{R}).
\end{equation*}
This condition is satisfied if $R$ is large enough.
%
%


In our study of \eqref{pblP}, one of our main motivation is to understand to
what extent the precise value of $p$ plays a role in the existence and
qualitative properties of solutions.
Our main results are multiplicity of solutions with
respect to the value of the power $p$, without assuming
subcriticality.
It has been shown in \cite{BonheureNorisWeth,GrossiNoris,SerraTilli}
that for the Neumann problem \eqref{pblP} in a ball, no growth
restriction is needed to prove the existence of at least one non
constant solution. 
Since we deal with a simpler model than in the quoted references, we
are able to perform
here a refined analysis. Namely, we obtain non trivial solutions that
branch out from the constant solution, see Section~\ref{bifu}.
Combined with a priori estimates, this leads to the following
multiplicity result.
\begin{Thm}
  \label{intro1}
  Assume $\Omega=B_{R}$ , $N \ge 2$,
  $n\in \IN_0$, $p \in \intervaloo{2, +\infty}$ and
  $\lambda > 0$.
  \begin{enumerate}[(i)]
  \item   If $\lambda(p-2) >
    \lambdarad_{n+1}(B_R)$, then Problem~\eqref{pblP}
    has at least $n$ distinct non constant radial solutions.
\item  If $\lambda(p-2) >
    \lambdarad_{n+1}(B_R)$ and $p< 2^*$, then~\eqref{pblP} possesses at least
    $2n$ distinct non constant radial solutions.
  \item\label{mult:before:crit} If $\lambda(2^*-2) >
    \lambdarad_{n+1}(B_R)$ and $N \ge 3$,
    there exists $\varepsilon_{n,R}>0$ such that
    if
    $$ \lambdarad_{n+1}(B_R) - \varepsilon_{n,R} <  \lambda(p-2) < \lambdarad_{n+1}(B_R),$$
    then~\eqref{pblP} has at least
    $2n$ distinct non constant radial solutions.
  \end{enumerate}
\end{Thm}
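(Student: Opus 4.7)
The plan is to bifurcate non-constant radial positive solutions from the constant one $u \equiv c$ with $c := \lambda^{1/(p-2)}$, treating $p$ as the bifurcation parameter. Writing $u = c + v$, the linearization at $v = 0$ is $-\Delta - (p-2)\lambda$ on $\Hunrad(B_R)$ with Neumann conditions, whose spectrum is $\{\lambdarad_k(B_R) - (p-2)\lambda\}_{k \ge 1}$. Since the radial Neumann eigenvalues on a ball are simple, each value $p_k > 2$ satisfying $(p_k - 2)\lambda = \lambdarad_k(B_R)$ ($k \ge 2$) is a Crandall--Rabinowitz bifurcation point, giving rise to a smooth local curve $C_k$ of non-constant radial solutions emanating from $(p_k, c)$. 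Rabinowitz's global theorem extends $C_k$ to an unbounded continuum, and the variational structure of $\E_{\lambda,p}$ on $\Hunrad$ provides a Morse-index invariant that distinguishes the $C_k$'s from one another.

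For \emph{(ii)}, subcriticality $p < 2^*$ gives uniform $L^\infty$ a priori bounds on positive radial solutions, via a Gidas--Spruck blow-up argument. The Rabinowitz alternatives then prevent finite blow-up of each $C_k$, forcing it to extend for all $p$ with $\lambda(p-2) > \lambdarad_k$. A variational bifurcation analysis (in the spirit of Böhme--Marino / Rabinowitz, applied at a simple odd-crossing of the radial Morse index) produces \emph{two} distinct branches past each bifurcation, so that summing over $k = 2, \ldots, n+1$ yields $2n$ distinct non-constant radial solutions.

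For \emph{(i)}, no such global bounds are available in the supercritical regime; only the branch lying in the cone of radially monotone functions with the monotonicity pattern of the leading eigenfunction $\phi_k$ can be extended globally. Such an extension, valid for all $p > p_k$, is provided by Nehari-type minimization on the monotone cone, using the compactness recovery of Serra--Tilli and Bonheure--Noris--Weth. This yields at least one non-constant solution per $k$, hence $n$ in total.

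For \emph{(iii)}, the hypothesis $\lambda(2^*-2) > \lambdarad_{n+1}$ places $p_{n+1}$ strictly in the subcritical range. By part~(ii), any $p$ with $\lambdarad_n < \lambda(p-2) < \lambdarad_{n+1}$ already carries $2(n-1)$ radial solutions coming from the crossed bifurcations at $p_2, \ldots, p_n$. On the subcritical side of $p_{n+1}$, the two local branches of $C_{n+1}$ persist in a neighborhood of size $\varepsilon_{n,R}$ furnished by the Lyapunov--Schmidt reduction, contributing two further solutions and totalling $2n$. The main technical obstacle is establishing the factor of two --- i.e., that each simple-eigenvalue crossing produces two globally distinct branches of positive radial solutions in the subcritical regime --- which relies on a careful analysis of the reduced bifurcation equation together with the preservation of a Morse-index invariant along each continuum.
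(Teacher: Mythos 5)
Your overall skeleton --- Crandall--Rabinowitz at the simple radial eigenvalues $p_k = 2 + \lambdarad_k(B_R)/\lambda$, Rabinowitz's global alternative, and nodal-type invariants to separate the branches --- matches the paper's, and your treatment of~(ii) is essentially correct. Two of the three parts, however, rest on mechanisms that the paper does not use and that would not close the argument.

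For part~(i), you propose to extend the branches by Nehari minimization on a cone of radially monotone functions, citing Serra--Tilli and Bonheure--Noris--Weth. That method produces only the single increasing solution attached to $\phi_2$; for $k \ge 3$ the eigenfunction $\phi_k$ is not monotone, the type-$k_-$ solutions have $k$ critical points, and they do not lie in any monotone cone, so the construction does not yield $n$ distinct solutions. The paper's actual key lemma for~(i) is the Hamiltonian a priori bound of Theorem~\ref{thm21}: for radial solutions the quantity
\begin{equation*}
h(r) := \tfrac{1}{2}|u'(r)|^2 + \tfrac{1}{p}|u(r)|^p - \tfrac{1}{2}u(r)^2
\end{equation*}
satisfies $h'(r) = -\tfrac{N-1}{r}|u'(r)|^2 \le 0$, and $u(0)<1$ forces $h(0)\le 0$, so the whole phase-plane trajectory stays in the bounded region $\{h\le 0\}$, giving $\|u\|_\infty \le e^{1/2}$ and $\|u'\|_\infty \le 1$ \emph{uniformly in $p$ and $R$}. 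This bound holds along all of $B_k^-$ (the half-branch where $u(0)<1$) and, together with elliptic bootstrap, forces the projection of $B_k^-$ onto the $p$-axis to be unbounded, giving the $n$ solutions of~(i) with no growth restriction on $p$.

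For part~(iii), you claim the two local branches of $C_{n+1}$ persist on the subcritical side of $p_{n+1}$ by Lyapunov--Schmidt. That is precisely what does not happen: the paper proves, via the Bessel-function inequality of Lemma~\ref{lemma:integ>0} (which is exactly where the hypothesis $N\ge 3$ enters), that each radial bifurcation is \emph{transcritical}. Hence the two local pieces $B_{n+1}^\pm$ leave $(p_{n+1},1)$ in opposite $p$-directions, only $B_{n+1}^+$ is present for $p<p_{n+1}$, and the Lyapunov--Schmidt neighborhood supplies just one extra solution. The second one comes from a turning point: $B_{n+1}^+$ bifurcates to the left, is a priori bounded for $p<2^*$, and is an unbounded continuum with $p\ge 2+\varepsilon$, so $p$ attains a minimum along it at a degenerate solution (Proposition~\ref{deg}); the branch folds back past this turning point and produces two distinct type-$(n+1)_+$ solutions on a left-neighborhood of $p_{n+1}$ (Corollary~\ref{multiloc}). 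Without the transcriticality and turning-point mechanism, the count of $2n$ in~(iii) does not follow from local bifurcation alone.
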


This theorem implies the existence of arbitrarily many solutions for either large $p$ or large~$R$. We anticipate that the $n$ solutions
$u_{i}$ are distinguished by the number of nodal regions of $u_{i}-1$
(and also by the number of critical points). Indeed, the bifurcation
analysis shows that, given a positive integer $n$, \eqref{pblP} has at
least one radial solution $u$ such that $u-1$ has  $n$ nodal regions
provided $p > 2 + \lambdarad_{n+1}(B_R)/\lambda$
(see Section~\ref{bifu} for more details). We also anticipate that
the validity of~\eqref{mult:before:crit}
relies on a key estimate of Bessel's function (see
Lemma~\ref{lemma:integ>0}).  Numerical evidence shows it to be valid
in dimension $N=2$ but this is not formally proved.

\begin{figure}[ht]
  \centering
  \tikzpicturedependsonfile{\graphpath bifurcation-0.dat}
  \tikzsetnextfilename{bifurcation-diag}
  \newcommand{\critII}{2.62425}
  \newcommand{\critIII}{3.67692}
  \begin{tikzpicture}[x=10ex, y=2pt]
    \draw[->] (2,0) -- (2,25) node[left]{$u(0)$};
    \draw[dashed] (2,1) node[left]{$1$} -- (5,1);
    \draw[dashed] (4, 0) -- (4, 27) node[right]{$p = 2^*$};
    \draw[fill] (\critII,1) circle(1.7pt);
    \draw[fill] (\critIII,1) circle(1.7pt);
    \begin{scope}
      \clip (1.8, -10) rectangle (5.6,26);
      \draw[thick, smooth] plot file{\graphpath bifurcation-1.dat};
      \draw[thick, smooth] plot file{\graphpath bifurcation-3.dat};
    \end{scope}
    \begin{scope}[y=6pt, yshift=-4pt]
      \draw (2,0) -- (2,1);
      \draw[->] (2,0) -- (5.5, 0) node[below]{$p$};
      \draw (\critII, 0) ++ (0,2pt) -- ++(0,-4pt)
      node[below]{\small $2 + \lambdarad_2/\lambda$};
      \draw (\critIII, 0) ++ (0,2pt) -- ++(0,-4pt)
      node[below]{\small $2 + \lambdarad_3/\lambda$};
      \draw[line width=1pt, smooth] plot file{\graphpath bifurcation-0.dat};
      \draw[line width=1pt, smooth] plot file{\graphpath bifurcation-2.dat};
    \end{scope}
  \end{tikzpicture}

  \vspace*{-2ex}
  \caption{Radial bifurcation branches from $2 + \lambdarad_i/\lambda$,
    $i=2,3$ when
    $2 + \lambdarad_1/\lambda < 2 + \lambdarad_2/\lambda < 2^{*}$.}
  \label{fig:bifuc-simple}
\end{figure}

\medbreak

Theorem \ref{intro1} contrasts with the classical uniqueness result \cite{gnn}
of either radial or non radial solutions of \eqref{Diri} in a
ball. Even in the case of an annulus where uniqueness may fail ---
coexistence of radial and non radial solutions was first observed
in~\cite{BrezisNirenberg} --- uniqueness in the class of radial solutions
was proved in \cite{NiNussbaum}. Multiplicity for \eqref{pblP} was
observed in~\cite{BonheureSerra} where the existence of at least three
non constant
solutions is proved but the results therein are perturbative, assuming
$p\to \infty$, and only concern the case of an annulus.

\smallbreak

Theorem \ref{intro1} is consistent with the analysis of~\cite{wang91} in
the critical case $p=2^{*}$. One of the added value of our result is
that it holds
for any $p>2$ and gives further and precise informations on the
multiplicity of solutions and not solely on the existence.

\smallbreak

The structure of the bifurcations (see Fig.~\ref{fig:bifuc-simple})
also allows to identify degenerate radial solutions along some of the
branches (see Theorem~\ref{deg}). This leads to another striking
difference between the problems \eqref{pblP} and
\eqref{Diri} as it is known that the positive solution to~\eqref{Diri}
is non-degenerate when $\Omega$ is
a ball~\cite{dgp99,Ramaswamy-Srikanth87,Korman98}
or a ``large'' annulus~\cite{Bartsch-Clapp-Grossi-Pacella12}.

\smallbreak

The bifurcation analysis can also be performed without assuming radial
symmetry of the domain $\Omega$, see also \cite{nt}. However, in this
case, it seems necessary to restrict ourselves to a nonlinearity with
subcritical growth. Also we do not have such a precise picture of the
bifurcations since non simple eigenvalues may arise and the study of
the behavior of solutions along a branch is much more delicate. In
particular, we cannot expect any a priori bounds in the supercritical
regime and we expect bifurcations from infinity.

\smallbreak

Even in the case of a radially symmetric domain, non-radial
bifurcations appear, for instance at the first bifurcation
point. Indeed, this first bifurcation occurs at a non-radial
eigenvalue of the elliptic operator (see
Section~\ref{bifu}). Nevertheless, the corresponding eigenfunctions
are axially symmetric.  As we can perform our bifurcation analysis in
the space of axially symmetric functions and since it provides axially
symmetric functions along the branches, it is natural to conjecture
that the first bifurcation is responsible of the symmetry breaking of
the least energy solution when $2+\lambda_2(B_{R})/\lambda< 2^*$ (as
it is also expected that $u = \lambda^{1/(p-2)}$
is the unique positive least energy solution for
$p \le 2+\lambda_2(B_{R})/\lambda$). Moreover, we conjecture that this
first bifurcation is unbounded in $p$ leading to the existence of a
non-radial solution for large $p$ on large balls or for large
$\lambda$ (see Section~\ref{sec:first-bifurc}).

\begin{Conj}
  \label{conj:nonradial-supercritical}
  Let $N \ge 3$, $\lambda>0$, $\Omega = B_R$ and
  $2 + \lambda_2(B_{R})/\lambda< 2^*$. For every $p \in
  \intervaloo{2+\lambda_2(B_{R})/\lambda,\, +\infty}$,
  there exists a positive non radial
  solution of \eqref{pblP} which is axially symmetric.
\end{Conj}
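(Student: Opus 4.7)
The plan is to combine a local Crandall--Rabinowitz analysis at $p_2 := 2 + \lambda_2(B_R)/\lambda$ with a global Rabinowitz-type continuation in the axially symmetric subspace of $H^1(B_R)$, and to exploit a priori estimates on this restricted class of solutions to force the resulting branch to extend all the way to $p = +\infty$.

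First I would fix an axis $e_N$ and set $X := \{u \in H^1(B_R) : u(gx) = u(x) \text{ for all } g \in O(N-1)\}$. The Neumann eigenspace associated with $\lambda_2(B_R)$ is $N$-dimensional (spanned by the first ``coordinate-type'' eigenfunctions); after imposing $O(N-1)$-invariance it becomes one-dimensional in $X$, generated by an axially symmetric $\phi_2$ that changes sign across $\{x_N = 0\}$. Since the linearisation of $-\Delta u + \lambda u - \abs{u}^{p-2} u$ at the constant $c_p := \lambda^{1/(p-2)}$ equals $-\Delta v - \lambda(p-2)v$, $p_2$ is a simple bifurcation value in $X$ and Crandall--Rabinowitz (essentially the same analysis as in Section~\ref{bifu}, restricted to the axial subspace) provides a local $C^1$ curve of axially symmetric, non-constant, positive solutions of \eqref{pblP} issuing from $(p_2, c_{p_2})$.

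Next I would apply Rabinowitz's global bifurcation theorem to the compact perturbation $F(u, p) := u - (-\Delta + \lambda)^{-1}(\abs{u}^{p-2}u)$ on $X \times \intervaloo{2, +\infty}$ (truncating the nonlinearity for large $\abs{u}$ to handle the supercritical case, then removing the truncation using the a priori bounds below) to obtain a continuum $\C$ of non-trivial axially symmetric solutions emanating from $(p_2, c_{p_2})$. The standard alternative asserts that either $\C$ is unbounded in $X \times \intervaloo{2, +\infty}$ or $\C$ returns to another bifurcation point on the trivial branch of constants. The second alternative should be ruled out by tracking the nodal structure of $u - c_p$: near $(p_2, c_{p_2})$ it inherits from $\phi_2$ exactly two nodal components separated by $\{x_N = 0\}$, and this count is preserved along $\C$ as long as the linearisation stays non-degenerate, whereas the axially symmetric eigenfunctions at higher eigenvalues possess strictly more nodal regions; positivity propagates similarly by continuity away from the trivial branch.

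Finally, to conclude that $\C$ is unbounded in the $p$-direction rather than in the $X$-direction at fixed $p$, one needs a priori bounds on axially symmetric positive solutions of \eqref{pblP} for $p$ in any compact subinterval of $\intervaloo{p_2, +\infty}$. In cylindrical coordinates $(r, z)$ the equation becomes a two-dimensional weighted problem on $\{(r,z) : r \ge 0, \ r^2 + z^2 \le R^2\}$, in which axial symmetry effectively raises the Sobolev exponent and classical compactness holds up to $p < 2(N-1)/(N-3)$. Beyond this weighted-critical threshold one must perform a blow-up analysis and rule out concentration on the axis, in the interior, and at $\partial B_R$, which would require a Liouville-type theorem for positive axially symmetric entire solutions of $-\Delta u = u^{p-1}$ as well as a Pohozaev identity adapted to the Neumann boundary condition and to axial symmetry. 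I expect this last step to be the main obstacle, and presumably the reason the statement remains only a conjecture.
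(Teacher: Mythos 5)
The statement you were asked to prove is Conjecture~\ref{conj:nonradial-supercritical}; the paper offers no proof and explicitly leaves it open.  Your outline coincides with the heuristics the authors advance in Section~\ref{sec:first-bifurc} and in the Introduction: pass to the $O(N-1)$-invariant subspace so that $\lambda_2(B_R)$ becomes a simple eigenvalue, apply Crandall--Rabinowitz at $p_2 = 2+\lambda_2(B_R)/\lambda$, globally continue the continuum, and separate it from the trivial branch by nodal data.  You also correctly single out the missing ingredient --- a priori bounds for axially symmetric positive solutions in the supercritical range --- as the step that blocks the argument.

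Your proposed route to those bounds, however, does not work.  Restricting to $O(N-1)$-invariant functions on a ball does \emph{not} raise the Sobolev exponent to $2(N-1)/(N-3)$: the orbits of the symmetry group degenerate on the axis $\{x'=0\}$, and in particular at the two poles where the axis meets $\partial B_R$, which is precisely where the ground states concentrate (cf.\ Figure~\ref{fig:ground-states}).  A concentrating bubble $\epsilon^{-(N-2)/2}\phi(x/\epsilon)$ centred on the axis is axially symmetric, stays bounded in $H^1$, and has $L^q$-norm blowing up for every $q>2^*$; so there is no ``weighted-critical'' threshold up to which compactness is automatic, and for $p>2^*$ one faces the full supercritical problem.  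This is more than a technical nuisance: the authors themselves remark that once radial symmetry of the solutions is dropped, one ``cannot expect any a priori bounds in the supercritical regime'' and bifurcations from infinity are anticipated.  If the continuum escapes to infinity at some finite $\bar p$, Rabinowitz's alternative yields unboundedness of the continuum but says nothing about its $p$-projection covering $\intervaloo{\bar p, +\infty}$, and the truncation you introduce at the outset then cannot be removed.  An actual proof would need either a blow-up/Liouville analysis strong enough to exclude axially symmetric boundary concentration (not currently available and plausibly false), or a mechanism that bypasses a priori bounds altogether --- for example a limit problem as $p\to\infty$ in the spirit of~\cite{BGN}, or a compactified degree-theoretic argument.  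As written, your proposal identifies the right obstruction but does not close it, and the specific compactness mechanism you rely on is incorrect.
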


\smallbreak

Concerning the qualitative properties of the least energy solutions to
Problem~\eqref{pblP}, Lopes showed that such a solution is even with
respect to a family of hyperplanes, see~\cite{Lopes}, and in fact it
is furthermore cap symmetric, see~\cite{Weth}. Moreover, Lopes showed
that either the least energy solution is constant or it is
non-radially symmetric. We provide in Section~\ref{symbre} an
alternative and shorter proof of this fact.  This in turn provides the
upper bound $2+\lambda_{2}(B_{R})/\lambda$ on the exponent $p$ at
which the radial symmetry of least energy solutions is lost. For $p$
close to $2$, as a consequence of \cite{bbg}, any least energy
solution of~\eqref{pblP} is invariant under the action of the symmetry
group of the domain~$\Omega$. For instance, in radial domains, these
are radial functions. According to the previous discussion, this
suggests, at least for the ball, that for $p$ close to $2$, any least
energy solution is in fact constant. This is actually true for every
domain, see Theorem~\ref{1=unique sol}. Numerical experiments, based
on the mountain pass algorithm, suggest that
$2+\lambda_{2}(B_{R})/\lambda$ is the exact threshold for the
existence of non constant least energy solutions, see
Section~\ref{expIl}.

\begin{Conj}
  Let $N \ge 2$, $\lambda>0$, $p \in \intervaloo{2, 2^*}$ and $\Omega
  = B_R$. The positive constant solution is the least energy solution
  to~\eqref{pblP} if and only if $p< 2+\lambda_2(B_{R})/\lambda$ and
  there is no other positive solution in this range of the
  parameters. For $p> 2+\lambda_2(B_{R})/\lambda$, the least energy
  solutions are not radially symmetric and belong to the branch
  bifurcating from $(p, u) = \bigl(2+\lambda_2(B_{R})/\lambda,
  \lambda^{1/(p-2)} \bigr)$.
\end{Conj}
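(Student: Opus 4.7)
The plan is to split the conjecture into three logically independent pieces: (a) that $u_c := \lambda^{1/(p-2)}$ ceases to be least energy once $p > 2 + \lambda_2(B_R)/\lambda$; (b) that below this threshold it is the unique positive solution, hence trivially the LES; (c) that above the threshold the LES sits on the specific bifurcating branch. The ingredients are a Morse-index computation at the constant, a global continuation/degree argument for uniqueness, the Lopes--Weth symmetry result recalled in Section~\ref{symbre}, and the bifurcation analysis of Section~\ref{bifu}.

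The easy step is (a). A direct computation gives
\begin{equation*}
  \E_{\lambda,p}''(u_c)[v,v] = \int_{B_R} \abs{\grad v}^2 - (p-2)\lambda \int_{B_R} v^2,
\end{equation*}
and the tangent space to $\N_{\lambda,p}$ at $u_c$ is $\{v \in H^1(B_R) : \int v = 0\}$. Plugging in a Neumann eigenfunction of $\lambda_2(B_R)$ (which is mean-free by orthogonality with the constant) makes this quadratic form strictly negative as soon as $p > 2 + \lambda_2(B_R)/\lambda$, so $u_c$ is not a local minimizer on $\N_{\lambda,p}$ and \emph{a fortiori} not the LES. Combined with the Lopes--Weth dichotomy (any LES is constant or non-radial), this immediately gives the non-radiality part of (c).

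For (b), I would extend Theorem~\ref{1=unique sol} from a neighborhood of $p = 2$ up to the full interval $\intervaloo{2,\, 2 + \lambda_2(B_R)/\lambda}$ by continuation in $p$. Starting from $p$ close to $2$, the implicit function theorem applied to the constant branch $p \mapsto u_c(p)$ keeps working as long as the linearization $-\Delta + \lambda - (p-1)\lambda$ stays invertible on $H^1$, which is precisely the condition $(p-2)\lambda \notin \sigma(-\Delta)$; the first degeneracy occurs at $p = 2 + \lambda_2(B_R)/\lambda$. To exclude positive solutions on branches disconnected from the constant, one needs uniform $L^\infty$ a priori bounds along $p$ (available in the subcritical regime through Gidas--Spruck blow-up analysis) together with a Leray--Schauder degree computation: the degree of the solution map on a large ball of $C(\closure{B_R})$ should be constant in $p$ and equal to the degree near $u_c$, forcing the constant to be the unique positive solution throughout the subinterval.

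The hard part is the precise identification of the LES with the $\lambda_2$-branch for $p > 2 + \lambda_2(B_R)/\lambda$. The plan is a Lyapunov--Schmidt reduction as in Section~\ref{bifu}, showing that the emerging branch has energy strictly below $\E_{\lambda,p}(u_c)$ in a neighborhood of the bifurcation point, and then propagating this energy dominance along the global branch. Weth's cap symmetry result forces any LES to share the axial symmetry of the bifurcating branch, so one hopes to match them either by comparing Morse indices on $\N_{\lambda,p}$ (both should equal one) or by a uniqueness theorem for axially symmetric positive solutions of index one. The main obstruction, and the reason the statement is left as a conjecture, is ruling out competing branches: a lower-energy branch bifurcating from a higher non-radial eigenvalue, a global bifurcation from infinity, or a secondary bifurcation along the principal branch. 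A satisfactory resolution would likely require sharper energy estimates than those currently available and justifies the numerical evidence discussed in Section~\ref{expIl}.
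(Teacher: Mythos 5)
First, a framing caveat: the statement you were asked to prove is labelled as a \emph{Conjecture} in the paper, and the authors provide no proof of it. The paper offers only partial supporting evidence: the Morse-index observation that the constant is not a local minimizer on the Nehari manifold once $\lambda(p-2) > \lambda_2(B_R)$, Lopes's dichotomy (LES is constant or non-radial), uniqueness near $p=2$ (Theorem~\ref{1=unique sol}), and numerics. So there is no ``paper proof'' to compare against; what you have produced is, like the paper itself, a plausible program with identified obstructions. Your decomposition into (a), (b), (c) is a sensible way to organize the problem, and your piece (a) is correct and essentially the argument the paper gives.

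That said, piece (b) as written has a genuine gap. The implicit function theorem along the constant branch gives only \emph{local} uniqueness near $u_c$; it says nothing about positive solutions that live far from the constant. You propose to rule those out via a Leray--Schauder degree computation, but degree constancy is a statement about a \emph{signed} count, so a degree equal to~$1$ on a large ball is perfectly consistent with, say, three positive solutions of index $+1,+1,-1$. Without additional input (a non-degeneracy result, or a sharper structural result à la Lin--Ni--Takagi extended to the whole range $p < 2 + \lambda_2/\lambda$), the degree argument does not force uniqueness. This is precisely the delicacy that makes Conjecture~\ref{conj:ground-states}(\ref{conj:1}) genuinely open: the paper's Theorem~\ref{1=unique sol} shows uniqueness only for $p$ in a small right-neighborhood of~$2$ and does not reach the conjectured threshold. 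Piece (c) you correctly flag as the hard part: ruling out competing branches (from higher eigenvalues, from infinity, or secondary bifurcations along the principal branch) and proving the branch emanating from $2+\lambda_2/\lambda$ is energy-minimizing globally are exactly the obstructions the authors cite, and your sketch offers no new mechanism to overcome them. So the proposal is a reasonable restatement of the problem, not a proof, and it should be presented as such.
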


Our numerical simulations also complements the
papers~\cite{nitakagi1,nitakagi2} where it was shown that, on a smooth
domain~$\Omega$, the least energy solutions $u_\varepsilon$ of
\begin{equation*}
  \tag{\protect{$\mathcal{P}_\varepsilon$}} \label{pblE}
  \left\{
    \begin{aligned}
      -{\varepsilon}\, \Delta u+u&= f(u),&&\text{ in } B_{R}, \\
      u&>0, &&\text{ in } B_{R}, \\
      \partial_{\nu}u&=0,&&\text{ on } \partial B_{R}
    \end{aligned}
  \right.
\end{equation*}
with $f(u) = \abs{u}^{p-2}u$ with $2<p < 2^*$ concentrate, as
$\varepsilon\to 0$, around a single point of the boundary
$\partial\Omega$. If $\Omega=B_{R}$, this obviously means that when
$R$ is large, the radial symmetry of least energy solutions breaks
down at any fixed subcritical exponent. Our analytical results and the
numerical simulations indicate that ``large'' likely means
$1+\lambda_2(B_{R}) < 2^*$.

In Section~\ref{sec:small diffusion}, we apply our radial bifurcation
analysis on the problem \eqref{pblE}. The parameter $\varepsilon >0$
aims here to model a small diffusion. By a simple scaling argument,
it is easily seen that Problem~\eqref{pblE}
with $f(u) = \abs{u}^{p-2}u$
is equivalent to \eqref{pblP} with $\lambda=1$ in the ball
$B_{R/\sqrt{\varepsilon}}$.
We require few assumptions on $f$. Namely,
$f$ is of class $\C^{1}$ and satisfies, for some $u_{0}>0$,
\begin{gather}
  \tag{\protect{$F_0$}} \label{F0}
  f(0)=f'(0)=0;
  \\[1\jot]
  \tag{\protect{$F_{1}$}} \label{F1}
  f(u_{0})=u_{0}
  \quad\text{and}\quad
  f'(u_{0})>1;
  \\[1\jot]
  \tag{\protect{$F_{2}$}} \label{F2}
  F(s) - \frac{s^2}{2} < \varliminf_{s\to +\infty} \left(F(s)
    -\frac{s^2}{2}\right) \text{ for } 0\le s\le u_{0},\
\end{gather}
where $F(s) := \int_{0}^s f(t)\intd t$.   Assumption~\eqref{F1} implies
in particular that $u_{0}$ is a solution. The third assumption
provides a priori bounds for a large family of solutions which
bifurcate from the constant solution $u_{0}$.
\begin{Thm}\label{intro2}
  Assume $f\in \C^{1}$ satisfies \eqref{F0}, \eqref{F1}, \eqref{F2}, and
  $N\ge 2$.  Then for any $n\in \IN_0$ and any
  $\varepsilon >0$ such that $\varepsilon
  < ({f'(u_{0})-1})/{\lambdarad_{n+1}(B_{R})}$, Problem \eqref{pblE}
  has at least $n$ distinct non-constant radial solutions.
\end{Thm}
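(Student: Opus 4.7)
My plan is to reduce Theorem~\ref{intro2} to a bifurcation analysis for an autonomous problem on a ball of varying radius, in the spirit of Theorem~\ref{intro1}. Given a solution $u$ of~\eqref{pblE} on $B_{R}$, set $v(y) := u(\sqrt{\varepsilon}\, y)$ for $y \in B_{\tilde R}$ with $\tilde R := R/\sqrt{\varepsilon}$. A direct computation shows that $v$ satisfies $-\Delta v + v = f(v)$ in $B_{\tilde R}$ with homogeneous Neumann boundary conditions, and under the same change of variables one has $\lambdarad_{i}(B_{\tilde R}) = \varepsilon\,\lambdarad_{i}(B_{R})$. The hypothesis $\varepsilon < (f'(u_{0}) - 1)/\lambdarad_{n+1}(B_{R})$ is therefore equivalent to
\begin{equation*}
  \lambdarad_{n+1}(B_{\tilde R}) < f'(u_{0}) - 1,
\end{equation*}
and it suffices to produce $n$ non-constant radial solutions of the rescaled problem on $B_{\tilde R}$ under this inequality.

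With $\tilde R$ taken as the parameter, $v \equiv u_{0}$ is a solution for every $\tilde R$. Linearising at $u_{0}$ yields the operator $-\Delta + 1 - f'(u_{0})$, whose kernel on the space of radial $H^{1}$-functions is nontrivial precisely when $\lambdarad_{i}(B_{\tilde R}) = f'(u_{0}) - 1$ for some $i$. Since $\tilde R \mapsto \lambdarad_{i}(B_{\tilde R})$ is continuous and strictly decreases from $+\infty$ to $0$ as $\tilde R$ grows from $0$ to $+\infty$ (for $i \ge 2$), and radial eigenvalues are simple, one obtains an increasing sequence of bifurcation radii $\tilde R_{2} < \tilde R_{3} < \dotsb$ at which the Crandall--Rabinowitz transversality condition is verified, with $\tilde R_{n+1} \le \tilde R$. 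The global bifurcation machinery used in Section~\ref{bifu} then produces, at each $\tilde R_{i}$ with $i = 2, \dots, n+1$, a global continuum $\Sigma_{i}$ of non-constant radial solutions emanating from $(\tilde R_{i}, u_{0})$.

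I would then argue, exactly as in the proof of Theorem~\ref{intro1}, that each $\Sigma_{i}$ persists at every $\tilde R \ge \tilde R_{i}$ and that the resulting $n$ solutions (one for each $i \in \{2, \ldots, n+1\}$) are mutually distinct. Distinctness comes from the nodal classification: solutions on $\Sigma_{i}$ near the bifurcation point inherit the nodal structure of the $i$-th radial eigenfunction, a feature preserved along the continuum since branches with distinct nodal classes cannot merge and cannot meet the trivial branch at a different $\tilde R_{j}$. The global alternative then forces $\Sigma_{i}$ to be unbounded in the parameter direction, so its projection on the $\tilde R$-axis covers $\intervalco{\tilde R_{i}, +\infty}$.

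The principal obstacle is the a priori $L^{\infty}$ bound along each continuum that rules out blow-up at a finite $\tilde R$. This is precisely the role of assumption~\eqref{F2}: together with~\eqref{F0} and~\eqref{F1}, it plays the part that the subcriticality (or the critical bound of Lemma~\ref{lemma:integ>0}) plays in Theorem~\ref{intro1}, furnishing a uniform bound on radial solutions through an energy/Pohoz\v{a}ev-type comparison for functions taking values in $\intervalcc{0, u_{0}}$ and above. Once this bound is in hand, the previous paragraph delivers $n$ distinct non-constant radial solutions at the given $\tilde R$, and undoing the scaling returns $n$ distinct non-constant radial solutions of~\eqref{pblE} on $B_{R}$.
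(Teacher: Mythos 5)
Your rescaling $v(y) := u(\sqrt{\varepsilon}\,y)$ to the autonomous problem on $B_{\tilde R}$ is correct, but it does not produce a genuinely different proof: Crandall--Rabinowitz and the Rabinowitz global alternative both require a fixed Banach space, so to ``vary $\tilde R$'' you must rescale once more to a ball of fixed radius, and that returns you exactly to the $\varepsilon$-formulation $(\mathcal{P}_\varepsilon)$ that the paper works with directly through the map $G(\varepsilon,u) = (-\varepsilon\Delta+\id)u - f(u)$. The bifurcation radii $\tilde R_i$ you identify are the $\varepsilon_i = (f'(u_0)-1)/\lambdarad_i(B_R)$ of~\eqref{eq:epsiloni} in disguise, and your nodal-separation argument is the one of Propositions~\ref{unbd} and~\ref{thm3E}. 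So this is the same route as the paper's, and the review below concerns the gaps in the argument rather than a methodological comparison.

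The key step --- that the projection of the branch on the parameter axis covers $\intervalco{\tilde R_i, +\infty}$ --- is asserted rather than proved, and two ingredients are missing. First, the a priori bound provided by~\eqref{F2} (Proposition~\ref{apriori:bound:small:diffus}) only controls solutions with $u(0) \le u_0$: it applies to the part $C_i^-$ of the continuum, not to $C_i^+$, which can blow up at a positive $\varepsilon$ when $f$ has fast growth. You must single out $C_i^-$; the phrase ``functions taking values in $\intervalcc{0,u_0}$ and above'' does not capture this restriction. Second, even restricted to $C_i^-$, unboundedness of the continuum together with the a priori bound does not by itself force $\tilde R \to +\infty$: the continuum could instead escape with $\tilde R \to 0^+$ (i.e.\ $\varepsilon \to +\infty$) and bounded $\C^{2,\alpha}$-norm. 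The paper rules this out with Lemma~\ref{Lem:uniqeps}, which shows that for $\varepsilon$ large every non-constant radial solution satisfies $u(0) > u_0$, so $\varepsilon$ stays bounded along $C_i^-$; only with this in hand does the unboundedness of the continuum and the bound of Proposition~\ref{apriori:bound:small:diffus} (uniform only for $\varepsilon$ bounded away from~$0$) give $\varepsilon \to 0$ along $C_i^-$, hence a solution of type $i_-$ for every $\varepsilon \in \intervaloo{0, \varepsilon_i}$. Finally, a smaller inaccuracy: Lemma~\ref{lemma:integ>0} has nothing to do with a priori bounds in Theorem~\ref{intro1} --- it governs the sign of the coefficient $b$ and hence the transcriticality of the radial bifurcations, while the bounds there come from Corollary~\ref{cor21} and Proposition~\ref{apriori-bound}.
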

If we assume further that $f$ has a subcritical growth, then we can
prove the existence of more solutions, at least $2n$ actually, as in
the case of a pure power. Theorem \ref{intro2} should be compared with
\cite{nt}. Since we deal with a radially symmetric domain, we are able
to go much deeper into the bifurcation analysis. Except from the
restrictive assumption on the
domain, our assumptions on the
nonlinearity $f$ are quite general. In particular, $f$ can have a 
fast growth at infinity.  Notice also that Theorem~\ref{intro2} is not
of perturbative nature since we precisely characterize the values of
$\varepsilon$ at which new solutions arise. The conclusion of
Theorem~\ref{intro2} can be made more precise when $f(s)=s^p$, see
\cite[Theorem B]{Miyamoto} which will be discussed in
Section~\ref{sec:small diffusion}.

We also emphasize that our solutions do not display interior
concentrations as $\varepsilon\to 0$ in opposition to
e.g.~\cite{Aprile-Pistoia10,grossipistoia}.
Actually, our families of solutions correspond to boundary clustered
layer solutions, that is solutions with many local maxima accumulating on
the boundary when $\varepsilon\to 0$.  In particular, the bifurcation
analysis provides an easy approach to find the boundary clustered
layer solutions of~\cite[Corollary~1.3]{MR2056434}
and~\cite[Theorem~1.1]{Malchiodi-Ni-Wei05}. In fact, the bifurcation analysis
gives the complete picture of radial clustering solutions completing
those obtained in \cite{MR2056434,Malchiodi-Ni-Wei05}.  For results in
that direction in a non symmetric setting, we refer to
\cite{MR3262455,MR2296306,MR2221246,MR2072213}.


\medbreak

The paper is organized as follows. Section~\ref{aprioriesti} deals
with a priori bounds, both with or without assuming radial symmetry,
which are crucial in the bifurcation analysis of \eqref{pblP}. In
Section~\ref{bifu}, we first give a general insight on the bifurcation
analysis and then a refined analysis of the radial bifurcations when
$\Omega$ is a ball leads to Theorem~\ref{intro1}. In
Section~\ref{sec:small diffusion}, we prove
Theorem~\ref{intro2}. Section~\ref{symbre} deals with the qualitative
properties of the least energy solutions in a ball. Finally,
Section~\ref{expIl} contains numerical simulations and further
conjectures.


\section{A priori estimates}
\label{aprioriesti}
In this Section, we derive a priori estimates on positive
solutions. These are helpful to control the norm of the solution along
the branches bifurcating from the constant solution. Of course, the
dependence on the bifurcation parameter is important and will be
emphasized. We start with a uniform $L^{1}$ bound.

\begin{Lem}\label{borneL1}
  Any nonnegative solution $u$ of \eqref{pblP} satisfies
  \begin{equation*}
    \int_{\Omega}u^{p-1} = \lambda \int_{\Omega}u
    \le \lambda^{(p-1)/(p-2)} \abs{\Omega}.
  \end{equation*}
\end{Lem}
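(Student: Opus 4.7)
The plan is to integrate the PDE over $\Omega$ to obtain the identity, and then use Hölder's inequality to produce the upper bound.

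First I would integrate the equation $-\Delta u + \lambda u = u^{p-1}$ (where we use $u \ge 0$ so that $\abs{u}^{p-2}u = u^{p-1}$) over $\Omega$. By the divergence theorem,
\begin{equation*}
  \int_{\Omega} (-\Delta u) = -\int_{\partial\Omega} \partial_{\nu} u \intd\sigma = 0
\end{equation*}
thanks to the Neumann boundary condition. Integrating both sides of the PDE therefore yields the equality $\lambda \int_{\Omega} u = \int_{\Omega} u^{p-1}$, which is the first part of the claim.

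For the upper bound I would apply Hölder's inequality to the pair of exponents $p-1$ and $(p-1)/(p-2)$ (which are conjugate since $p>2$):
\begin{equation*}
  \int_{\Omega} u
  \le \Bigl(\int_{\Omega} u^{p-1}\Bigr)^{1/(p-1)}
       \abs{\Omega}^{(p-2)/(p-1)}.
\end{equation*}
Combining this with the identity just obtained gives, setting $I := \int_{\Omega} u^{p-1}$,
\begin{equation*}
  I = \lambda \int_{\Omega} u
  \le \lambda\, I^{1/(p-1)} \abs{\Omega}^{(p-2)/(p-1)}.
\end{equation*}
If $I = 0$ there is nothing to prove; otherwise I divide by $I^{1/(p-1)}$ and raise to the power $(p-1)/(p-2)$ to conclude that $I \le \lambda^{(p-1)/(p-2)} \abs{\Omega}$, which completes the chain of inequalities.

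There is no real obstacle here: the proof is entirely routine once one notices that the Neumann condition kills the boundary term and that the natural integrability bootstrap is Hölder with the conjugate exponent $(p-1)/(p-2)$. The only point to mention is that writing $\abs{u}^{p-2}u = u^{p-1}$ is legitimate because $u$ is assumed nonnegative.
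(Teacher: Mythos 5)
Your proof is correct and follows exactly the paper's argument: integrate the equation (the Neumann condition eliminates the boundary term) to get the identity, then apply Hölder with the conjugate exponents $p-1$ and $(p-1)/(p-2)$ and solve for $\int_\Omega u^{p-1}$. You have merely spelled out the last algebraic step which the paper leaves implicit.
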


\begin{proof}
  Integrating the equation leads to
  $\lambda \int_{\Omega} u = \int_{\Omega}u^{p-1}$. H\"older inequality implies
  \begin{equation*}
    \int_{\Omega}u^{p-1} = \lambda \int_{\Omega}u
    \le \lambda \abs{\Omega}^{1-\frac{1}{p-1}} \norm{u}_{p-1},
  \end{equation*}
  so that the claim follows.
\end{proof}

This $L^{1}$ bound can
be improved through a bootstrap argument.

\begin{Prop}
  \label{apriori-bound}
  Assume $2<\bar p< 2^*$.
  There exists $C_{\bar p}>0$ such that any nonnegative solution
  to~\eqref{pblP} with $\lambda=1$ and $2 < p \le \bar p$ satisfies
  \begin{equation}
    \label{eq:bdd-pos}
    \max\bigl\{ \|u\|_{H^1},\|u\|_{L^\infty} \bigr\} \le C_{\bar p}.
  \end{equation}
\end{Prop}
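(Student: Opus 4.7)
The plan is to argue by contradiction through a blow-up rescaling in the spirit of Gidas--Spruck. First, I would reduce the statement to a uniform $L^\infty$ bound: multiplying the equation by $u$ and integrating yields
\[
  \|u\|_{H^1}^2 = \int_\Omega u^p \le \|u\|_{L^\infty}^{p-1}\|u\|_{L^1},
\]
and Lemma~\ref{borneL1} with $\lambda=1$ bounds $\|u\|_{L^1}$ by $\abs{\Omega}$, so the $L^\infty$ bound immediately yields the $H^1$ bound.

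Suppose toward a contradiction that there are sequences $p_n \in \intervaloc{2, \bar p}$ and solutions $u_n$ with $M_n := \|u_n\|_{L^\infty} \to \infty$. Picking $x_n \in \overline{\Omega}$ with $u_n(x_n) = M_n$, I would perform the natural scaling $\varepsilon_n := M_n^{-(p_n-2)/2} \in \intervaloc{0,1}$ and set
\[
  v_n(y) := \frac{1}{M_n} u_n(x_n + \varepsilon_n y),
  \quad y \in \Omega_n := \varepsilon_n^{-1}(\Omega - x_n).
\]
Then $v_n$ solves $-\Delta v_n + \varepsilon_n^2 v_n = v_n^{p_n-1}$ on $\Omega_n$ with Neumann condition, satisfies $0 \le v_n \le 1$ and $v_n(0) = 1$, and Schauder estimates (up to the boundary after a smooth flattening of $\partial\Omega$) give uniform $C^{2,\alpha}_{\textup{loc}}$ bounds. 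Passing to a subsequence so that $p_n \to p_\infty \in \intervalcc{2, \bar p}$, I would then split the analysis according to whether $\varepsilon_n$ stays bounded away from~$0$ or tends to~$0$.

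If $\varepsilon_n \ge c > 0$, the $L^1$-bound on $u_n$ rescales to $\int_K v_n \le M_n^{-1}\varepsilon_n^{-N}\abs{\Omega} \to 0$ on every compact set $K$ eventually contained in $\Omega_n$, whereas Ascoli--Arzel\`a extracts a $C^2_{\textup{loc}}$-limit $v$ with $v(0) = 1$, a contradiction. In the regime $\varepsilon_n \to 0$, the domains $\Omega_n$ exhaust either $\IR^N$ (when $\dist(x_n,\partial\Omega)/\varepsilon_n \to \infty$) or a half-space~$H$; in the latter case the Neumann condition permits an even reflection across $\partial H$, so after extraction $v_n \to v$ in $C^2_{\textup{loc}}$, where $v$ is an entire nonnegative bounded solution of $-\Delta v = v^{p_\infty-1}$ on $\IR^N$ with $v(0)=1$.

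The crux is then a Liouville step. When $p_\infty \in \intervaloc{2, \bar p}$, the classical Gidas--Spruck theorem applies, since $p_\infty - 1 < 2^*-1$, and forces $v \equiv 0$, contradicting $v(0)=1$. The delicate borderline case is $p_\infty = 2$, where the limit equation degenerates to the linear $-\Delta v = v$ on $\IR^N$; this is the main obstacle, since no subcritical nonlinear Liouville theorem is available. I would resolve it by passing to the spherical average $\bar v(r)$, which satisfies a Bessel-type ODE $\bar v'' + \tfrac{N-1}{r}\bar v' + \bar v = 0$ whose only bounded sign-keeping solution is identically zero, again contradicting $\bar v(0) = 1$. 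Combining all cases produces the uniform $L^\infty$ bound, and hence the claimed $H^1$ bound.
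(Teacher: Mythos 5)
Your proof is essentially correct, but it takes a genuinely different route from the paper in the delicate regime where $p$ is close to $2$.  The paper splits the argument into two disjoint regimes: for $2<p<(2N-2)/(N-2)$ it runs a Brezis--Strauss $L^1\to W^{1,q}$ bootstrap started from the uniform $L^1$ bound, and only for $p\in[\Underline{p},\bar p]$ with $\Underline{p}>2$ fixed does it invoke the Gidas--Spruck blow-up, so that the limit exponent automatically satisfies $p^*>2$ and the nonlinearity in the limit equation stays genuinely superlinear.  You instead run the blow-up uniformly over all $p\in\intervaloc{2,\bar p}$ and deal with the degeneracy $p_\infty=2$ directly, via two observations tailored to that regime: (i) if the scaling parameter $\varepsilon_n=M_n^{-(p_n-2)/2}$ stays bounded away from $0$, the rescaled domains have bounded size and the uniform $L^1$ bound on $u_n$ forces $\int v_n\to 0$, contradicting $v_n(0)=1$; and (ii) if $\varepsilon_n\to 0$ and $p_n\to 2$, the limit equation is the linear Helmholtz equation $-\Delta v=v$, and the spherical average of any bounded entire solution normalized at the origin is a constant multiple of $r^{-(N-2)/2}J_{(N-2)/2}(r)$, which oscillates in sign, contradicting $v\ge 0$.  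The paper's route yields, as a by-product, explicit $W^{2,r}$ regularity and avoids the borderline Liouville analysis at the cost of an extra bootstrap; your route is more unified (a single contradiction scheme) and the Bessel/sign-change observation is a pleasing substitute for the classical Gidas--Spruck theorem precisely where the latter does not apply.  Minor points worth tightening: the even reflection across $\partial H$ should be stated at the level of the weak formulation plus elliptic regularity to recover $C^2$ smoothness of the reflected limit; and in case (i) the domains $\Omega_n$ move with $n$, so one should extract $\varepsilon_n\to\varepsilon_\infty\in[c,1]$, $x_n\to x_\infty\in\closure{\Omega}$ before passing to the limit of the rescaled solutions on $\varepsilon_\infty^{-1}(\Omega-x_\infty)$.
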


\begin{proof}
  Assume first $2 < \bar{p} < (2N-2)/(N-2)$ and consider a family
  $(u_p)_{p \in \intervaloc{2, \bar{p}}}$ of positive solutions.  We
  argue as in Ni and Takagi~\cite{nt}. From the $L^{1}$ bound
  on $u_p^{p-1}$, by an elliptic regularity result of
  Brezis-Strauss~\cite{Brezis-Strauss}, we deduce a bound for
  $(u_p)$ in $W^{1,q}$ with $1 \le q < N/(N-1)$.  Sobolev embeddings
  give a bound in $L^{r_0}$ for $1 < r_0 < N/(N-2)$ and therefore, by
  the standard elliptic regularity theory, in $W^{2, r_0}$.
  We then bootstrap to increase the regularity.  If $(u_p)$ is bounded in
  $W^{2,r_n}$ with $1 < r_n < {N}/{2}$ then $(u_p)$ is bounded in $W^{2,
    r_{n+1}}$ with
  \begin{equation*}
    r_{n+1} = 
    \frac{1}{\bar{p} - 1} \frac{N}{N-2} r_n.
  \end{equation*}
  As $\bar{p} < \frac{2N-2}{N-2}$, one has $\frac{1}{\bar{p} - 1}
  \frac{N}{N-2} > 1$.  Taking $n$ large enough and choosing $r_0$
  adequately, one deduces that $(u_p)$ is bounded in $W^{2, r_n}$ with
  $r_n > N/2$ and therefore in the desired spaces.

  Let now $2 < \Underline{p} < \bar p < 2^*$.  It remains to prove
  that a family $(u_p)_{p \in \intervalcc{\Underline{p}, \bar p}}$ of
  positive solutions satisfies~\eqref{eq:bdd-pos}.  We follow the
  classical blow-up approach of Gidas-Spruck~\cite{Gidas-Spruck81}, so
  we will only sketch the argument.  Let us argue by contradiction and
  suppose on the contrary that there exists a sequence of exponents
  $(p_n) \subset \intervalcc{\Underline{p},\bar p}$ and a sequence of
  positive solutions $(u_{p_n})$ such that $\norm{u_{p_n}}_\infty \to
  +\infty$.  One can assume that $p_n \to p^* \ge \Underline p >
  2$.  Let $x_n$ be a point where $u_{p_n}$ achieves its maximum. Define
  \begin{equation*}
    v_{n}(y) := \mu_n u_{p_n}\bigl(\mu_n^{(p_n-2)/2} y + x_{n}\bigr),
    \quad
    \text{where}\quad \mu_n := 1 / \norm{u_{p_n}}_{L^\infty} \to 0.
  \end{equation*}
  Note that $v_n(0) = \norm{v_{n}}_{L^\infty} = 1$.
  The function $v_n$ satisfies
    \begin{equation*}
    -\Delta v_{n} + \mu_n^{p_n-2} v_{n} = v_{n}^{p_n - 1}
    \quad
    \text{on } \Omega_n := (\Omega - x_{n})/\mu_n^{(p_n-2)/2},
  \end{equation*}
  with Neumann boundary conditions.
  By elliptic regularity, $(v_n)$ is bounded in $W^{2,r}$ and
  $\C^{1,\alpha}$, $0 < \alpha < 1$ on any compact set.
  Thus, up to a subsequence and a rotation of the domain, one
  concludes that
  \begin{equation*}
    v_n \to v^* \quad \text{in } W^{2,r} \text{ and } \C^{1,\alpha}
    \text{ on compact sets of }
    \Omega^* = \IR^N \text{ or } \IR^{N-1}\times \IR_{> a^*},
  \end{equation*}
  where the choice between the two possibilities for $\Omega^*$
  depends on the limit of the ratio
  $\dist(x_n, \partial\Omega) / \mu_n^{(p_n-2)/2}$ .
  Clearly, one has $v^* \ge 0$, $v^*(0) = 1 = \norm{v}_{L^\infty}$ and
  $v^*$ satisfies
  \begin{equation*}
    -\Delta v^* = (v^*)^{p^*-1} \quad\text{in } \IR^N
    \qquad\text{ or }\qquad
    \begin{cases}
      -\Delta v^* = (v^*)^{p^*-1} &\text{in } \IR^{N-1}\times \IR_{>a^*},\\
      \partial_N v^* = 0& \text{when } x_N = a^*.
    \end{cases}
  \end{equation*}
  Liouville theorems~\cite{Gidas-Spruck81, YanYan-Li03} imply $v^* =
  0$ which contradicts $v^*(0) = 1$.
\end{proof}

This a priori estimate allows to conclude that for $p$ close to $2$,
the constant $u_{0}=1$ is the unique solution of \eqref{pblP} with
$\lambda=1$. In fact, it will be clear that even if $u$ is nonnegative
and solves the equation with Neumann boundary conditions, it has to be
the constant solution. The argument is again inspired from Ni and
Takagi~\cite{nt}.

\begin{Thm}
  \label{gsc2}\label{1=unique sol}
  Let $\Omega$ be a smooth bounded domain in $\IR^{N}$. There
  exists $\tilde{p} = \tilde p(\Omega) >2$ such that,
  if $2<p\le\tilde{p}$, the
  sole nonnegative solutions to Problem~\eqref{pblP} with $\lambda=1$
  are the constant functions $0$ and~$1$.
\end{Thm}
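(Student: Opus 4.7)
The plan is to argue by contradiction. If the conclusion fails, there exists a sequence $p_n \searrow 2$ and, for each $n$, a nonconstant nonnegative solution $u_n$ of~\eqref{pblP} with $\lambda = 1$ and $p = p_n$. By the strong maximum principle $u_n > 0$ on $\overline{\Omega}$, and evaluating $-\Delta u_n + u_n = u_n^{p_n-1}$ at a maximum point (where $-\Delta u_n \ge 0$) gives $\|u_n\|_\infty^{p_n-2} \ge 1$, hence $\|u_n\|_\infty \ge 1$. Combined with Proposition~\ref{apriori-bound} applied for any fixed $\bar p \in \intervaloo{2, 2^*}$, this yields $1 \le \|u_n\|_\infty \le C$ uniformly. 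Standard elliptic bootstrap then bounds $u_n$ in $\C^{1,\alpha}(\overline{\Omega})$ uniformly; along a subsequence $u_n \to u^*$ in $\C^{1,\alpha}$. Since $u^* \ge 1$ is bounded away from zero and $p_n \to 2$, $u_n^{p_n-1} \to u^*$ uniformly, so passing to the limit yields $-\Delta u^* = 0$ with Neumann boundary conditions, hence $u^* \equiv c$ for some constant $c \in \intervalcc{1, C}$.

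I would then linearize around this limit. Set $\bar u_n := |\Omega|^{-1}\int_\Omega u_n$ and $v_n := u_n - \bar u_n$, so $\int_\Omega v_n = 0$ and $\|v_n\|_\infty \to 0$. Since $u_n$ is nonconstant, $v_n \not\equiv 0$, so one may define $w_n := v_n / \|v_n\|_\infty$ with $\|w_n\|_\infty = 1$ and $\int_\Omega w_n = 0$. Applying the mean value theorem to $t \mapsto t^{p_n-1}$ produces $\xi_n(x)$ between $\bar u_n$ and $u_n(x)$ (hence $\xi_n \to c$ uniformly) such that
\begin{equation*}
  -\Delta w_n + w_n = (p_n - 1)\,\xi_n^{p_n-2}\, w_n + d_n,
  \qquad
  d_n := \frac{\bar u_n^{p_n-1} - \bar u_n}{\|v_n\|_\infty},
\end{equation*}
with Neumann boundary conditions. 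Integrating over~$\Omega$ and using $\int w_n = 0$ gives
$|\Omega|\, d_n = -(p_n-1)\int_\Omega \xi_n^{p_n-2}\, w_n$, whose right-hand side is uniformly bounded; therefore the right-hand side of the equation for $w_n$ is bounded in $L^\infty$.

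Elliptic regularity now provides a uniform $\C^{1,\alpha}$ bound on $w_n$; along a further subsequence, $w_n \to w^*$ in $\C^{1,\alpha}$ with $\|w^*\|_\infty = 1$ and $\int_\Omega w^* = 0$. Passing to the limit in the identity above yields $|\Omega|\lim d_n = -\int_\Omega w^* = 0$, so $d_n \to 0$. Since $(p_n-1)\xi_n^{p_n-2} \to 1$ uniformly, passing to the limit in the equation for $w_n$ gives $-\Delta w^* + w^* = w^*$ with Neumann boundary conditions, i.e.\ $\Delta w^* = 0$, so $w^*$ is constant; combined with $\int w^* = 0$, this forces $w^* \equiv 0$, contradicting $\|w^*\|_\infty = 1$. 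Finally, any nonnegative constant solution satisfies $c = c^{p-1}$, so $c \in \{0, 1\}$, which completes the proof.

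The main technical point is the two-scale blow-up: extracting first the trivial constant limit $u^*$ and then the nontrivial limit $w^*$ of the first-order fluctuation. The key identity is the one controlling $d_n$ through the zero-average constraint $\int w_n = 0$; without it, the linearised problem would carry a nonzero constant forcing at leading order and the contradiction would fail. The lower bound $\|u_n\|_\infty \ge 1$ is equally essential, as it rules out the degenerate limit $c = 0$, where $\xi_n^{p_n-2}$ could misbehave.
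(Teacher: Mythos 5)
Your proof is correct, but it follows a genuinely different route from the paper's. You argue by contradiction via a two-scale compactness/blow-up: assume nonconstant solutions exist for $p_n\searrow 2$, extract the trivial constant limit $u^*\equiv c\geq 1$, then renormalize the fluctuation $w_n=(u_n-\bar u_n)/\|u_n-\bar u_n\|_\infty$ and pass to the limit to reach a harmonic, zero-mean $w^*$ with $\|w^*\|_\infty=1$, a contradiction. The paper instead gives a direct, quantitative argument: writing $u=\bar u+\tilde u$ and testing the equation with $\tilde u$, the zero-mean constraint yields the spectral lower bound $(\lambda_2+1)\int\tilde u^2$ on the left, while Proposition~\ref{apriori-bound} makes the right-hand side weight $(p-1)(\bar u+s\tilde u)^{p-2}$ strictly below $\lambda_2+1$ once $p$ is close to~$2$; hence $\tilde u=0$ outright, with no subsequence extraction. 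Both arguments hinge on the same two ingredients (the a priori bound from Proposition~\ref{apriori-bound} and the spectral gap $\lambda_2>0$ encoded in the zero-mean constraint), but the paper's version is shorter, avoids passing to the limit, and exposes $\lambda_2$ explicitly, whereas yours is structurally closer to a Gidas--Spruck style blow-up and requires the extra observation $\|u_n\|_\infty\geq 1$ (via the maximum principle, which on the boundary needs the even-reflection argument you are implicitly using) to keep $\xi_n$ bounded away from zero so that $\xi_n^{p_n-2}\to 1$ uniformly. Both are valid proofs of the same result.
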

\begin{proof}
  Let $u \ne 0$ be a nonnegative solution to~\eqref{pblP}
  and write $u=\bar u
  +\tilde u$ where $\bar u$ denotes the average of $u$ on $\Omega$ so
  that $\tilde u$ has zero mean. Multiplying the equation by $\tilde
  u$ and integrating gives
  \begin{equation*}
    \int_{\Omega}|\nabla\tilde u|^2+|\tilde u|^2
    = \int_{\Omega} (\bar u + \tilde u)^{p-1}\tilde u
    = \int_{\Omega} \Bigl(\int_{0}^1(p-1)
    (\bar u + s\tilde u)^{p-2}\tilde u^2\intd s \Bigr).
  \end{equation*}
  As $\tilde u$ has zero mean, the left-hand side satisfies
  \begin{equation*}
    \int_{\Omega}|\nabla\tilde u|^2+|\tilde u|^2
    \ge (\lambda_{2} + 1) \int_{\Omega}|\tilde u|^2.
  \end{equation*}
  On the other hand, for any fixed $2< \tilde p <2^*$,
  it follows from Proposition~\ref{apriori-bound}
  that $\bar u + s \tilde u$ is uniformly bounded
  where the bound depends neither on $p\in \intervaloc{2, \tilde p}$
  nor on $s\in \intervalcc{0,1}$.
  Taking $\tilde p$ smaller if necessary, we may assume that
  for every $s\in \intervalcc{0,1}$ and any $2<p\le \tilde p$,
  \begin{equation*}
    (p-1)(\bar u + s \tilde u)^{p-2} < \lambda_{2} + 1.
  \end{equation*}
  We thus deduce that, for $p\le \tilde p$, $\tilde u=0$.
\end{proof}

Next we consider radial solutions of
\begin{equation*}
  \tag{\protect{$\mathcal{P}_{R,p}$}} \label{pblPr}
  \left\{
    \begin{aligned}
      -\Delta u+u&= \abs{ u}^{p-2}u,&&\text{ in } B_{R}, \\
      u&>0, &&\text{ in } B_{R}, \\
      \partial_{\nu}u&=0,&&\text{ on } \partial B_{R}.
    \end{aligned}
  \right.
\end{equation*}
It is observed in \cite{BonheureNorisWeth} that radially increasing
solutions are a priori bounded in $L^{\infty}$. We now show that an a
priori estimate holds true as soon as $u(0)<1$.

\begin{Thm}
  \label{thm21}
  If $u$ is a classical radial solution to Problem~\eqref{pblPr} such
  that $u(0)<1$, then
  \begin{equation*}
    \norm{u}_{\infty}\le \exp(1/2)
    \ \text{ and }\
    \norm{\partial_r u}_{\infty} \le 1.
  \end{equation*}
\end{Thm}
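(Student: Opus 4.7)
The plan is to exploit the radial Hamiltonian structure of the equation. I would first rewrite the ODE satisfied by $u$ as
\begin{equation*}
u''(r) + \tfrac{N-1}{r}\, u'(r) = u(r) - u(r)^{p-1}, \qquad r \in \intervaloo{0, R},
\end{equation*}
with $u'(0) = u'(R) = 0$. Setting
\begin{equation*}
G(s) := \frac{s^p}{p} - \frac{s^2}{2}
\quad\text{and}\quad
H(r) := \tfrac{1}{2}(u'(r))^2 + G(u(r)),
\end{equation*}
multiplying the equation by $u'(r)$ gives $H'(r) = -\tfrac{N-1}{r}(u'(r))^2 \le 0$. Since $u'(0) = 0$, integration on $\intervalcc{0, r}$ yields the core estimate
\begin{equation*}
\tfrac{1}{2}(u'(r))^2 + G(u(r)) \le G(u(0)) \qquad \text{for every } r \in \intervalcc{0, R}.
\end{equation*}

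Next I would analyze $G$ carefully. Since $G'(s) = s(s^{p-2}-1)$, $G$ is strictly decreasing on $\intervalcc{0,1}$ and strictly increasing on $\intervalco{1, +\infty}$, with global minimum $G(1) = 1/p - 1/2 < 0$ and a unique positive root $u_{*} := (p/2)^{1/(p-2)}$ lying in $\intervaloo{1, +\infty}$. The crucial scalar inequality is $u_{*} \le \e^{1/2}$, which I would derive from the elementary bound $\ln(1 + x) \le x$ applied with $x = (p-2)/2 \ge 0$: this gives $\ln u_{*} = \ln(p/2)/(p-2) \le 1/2$.

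Finally I would combine both ingredients. Since $u(0) \in \intervaloo{0, 1}$ and $G$ is decreasing on $\intervalcc{0, 1}$, one has $G(u(0)) \le G(0) = 0 = G(u_{*})$. For the sup bound: at any $r$, either $u(r) \le 1 \le \e^{1/2}$, or $u(r) > 1$ and the core estimate gives $G(u(r)) \le G(u(0)) < G(u_{*})$, whence monotonicity of $G$ on $\intervalco{1, +\infty}$ forces $u(r) < u_{*} \le \e^{1/2}$. For the derivative bound: the core estimate rewrites as $\tfrac{1}{2}(u'(r))^2 \le G(u(0)) - G(u(r)) \le 0 - G(1) = 1/2 - 1/p < 1/2$, giving $\abs{u'(r)} < 1$.

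The main obstacle is really just the scalar inequality $u_{*} \le \e^{1/2}$, which is the only place where the precise constant in the theorem is used; the rest of the argument is standard Hamiltonian bookkeeping, and the dimension $N \ge 2$ enters only to ensure $H' \le 0$ (for $N = 1$ one would even have conservation of $H$, and the same conclusion would follow).
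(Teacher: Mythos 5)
Your proposal is correct and follows essentially the same Hamiltonian route as the paper: multiplying the radial ODE by $u'$ to obtain the non-increasing quantity $h(r)=\tfrac12(u')^2+\tfrac1p u^p-\tfrac12 u^2$, then using $h(r)\le h(0)\le 0$ to bound both $u$ and $u'$. The only substantive difference is presentational: where the paper delegates the conclusion to a phase-portrait figure, you spell out the monotonicity of $G$ and verify the scalar inequality $(p/2)^{1/(p-2)}\le\e^{1/2}$ explicitly via $\ln(1+x)\le x$ with $x=(p-2)/2$, which is exactly the elementary estimate the paper silently uses.
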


\begin{proof}
  In radial coordinates,  where $'$ denotes $\partial_r$, the
  equation~\eqref{pblPr} writes
  \begin{equation}
    \label{eq:radial}
    -u'' -\frac{N-1}{r}u' + u = \abs{u}^{p-2} u
  \end{equation}
  with $u > 0$ and $u'(0)= u'(R)=0$.
  Multiplying by $u'$, we get that, for all $r > 0$,
  \begin{equation}
    \label{eq:h}
    \frac{\mathrm{d}}{\mathrm{d}r}h(r) = -\frac{N-1}{r}|u'(r)|^{2}\le 0,
    \end{equation}
  where
  \begin{equation}\label{eq:Hamiltonian}
    h(r):= \frac{|u'(r)|^{2}}{2} +\frac{\abs{u(r)}^{p}}{p}-\frac{u^{2}(r)}{2}.
  \end{equation}
  In particular, this means that $h(r)\le h(0)$ for any $r$.
  As we assume $u(0)< 1$ and given that $u'(0) = 0$, we have
  \begin{equation*}
    h(0)
    = \frac{\abs{u(0)}^{p}}{p}-\frac{u^{2}(0)}{2}
    = u^{2}(0) \biggl(\frac{\abs{u(0)}^{p-2}}{p}-\frac{1}{2}\biggr)
    \le 0.
  \end{equation*}
  As a consequence, we deduce (see Fig.~\ref{fig:h} where the thick curve
  corresponds to $h = 0$ and the dashed curves to $h < 0$) that
  \begin{equation*}
    u(r)\le \max_{r\in[0,R]} u(r)
    \le \Bigl(\frac{p}{2}\Bigr)^{1/({p-2})}
    \le \exp(1/2)
  \end{equation*}
  and
  \begin{equation*}
    |u'(r)|^{2}\le \frac{p-2}{p}
    \le 1.
    \qedhere
  \end{equation*}
\end{proof}
\begin{figure}[ht]
  \centering
  \begin{tikzpicture}[x=10ex, y=12ex]
    \draw[->] (-2,0) -- (2,0) node[below]{$u$};
    \draw[->] (0,-0.8) -- (0,0.8) node[left]{$\dot u$};
    \input{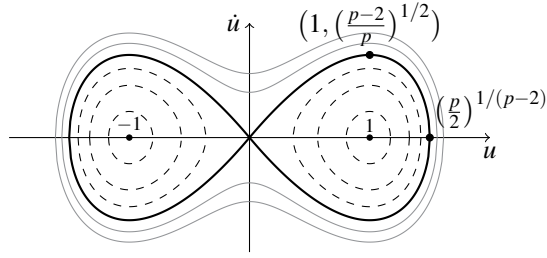}
    \draw[fill] (1,0) circle(1pt)
    node[above, yshift=-0.3ex]{$\scriptstyle 1$};
    \draw[fill] (-1,0) circle(1pt)
    node[above, yshift=-0.3ex]{$\scriptstyle -1$};
    \draw[fill] (1.5,0) circle(1.3pt)
    node[above right, xshift=-0.5ex]{$\bigl( \frac{p}{2} \bigr)^{1/(p-2)}$};
    \draw[fill] (1, 0.57735) circle(1.3pt)
    node[above] {$\bigl(1, \bigl(\frac{p-2}{p}\bigr)^{1/2} \bigr)$};
  \end{tikzpicture}
  \caption{Curves $\tfrac{1}{2}|\dot u|^2 + \frac{1}{p} \abs{u}^p
    - \frac{1}{2} u^2 = \text{const}$.}
  \label{fig:h}
\end{figure}

As soon as an $L^{\infty}$ estimate holds true, we essentially have a
bound in any topology by help of a standard bootstrap argument. The
main feature is that the bound explicitly depends on ${p}$ and does
only blow up as $p\to\infty$.
\begin{Cor}
  \label{cor21}
  For every $k\ge 0$ and $q\ge 1$, there exists $C>0$ such that if $u$
  is a classical radial solution of Problem~\eqref{pblPr} satisfying
  $u(0)<1$, then
  $$\norm{u}_{W^{k,q}}\le C^{p-1}.$$
\end{Cor}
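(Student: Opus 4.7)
The plan is to bootstrap from the $L^\infty$ bound of Theorem~\ref{thm21}, applying elliptic regularity to the equation $-\Delta u+u=u^{p-1}$ with Neumann boundary conditions on $B_R$ and tracking the $p$-dependence carefully at each step. The key observation is that a bound of the form $C^{p-1}$ is stable under multiplication by polynomial factors in $p$, by powers $u^{p-1-m}$ (dominated by $e^{(p-1)/2}$), and under finite products; hence each stage of the bootstrap preserves this form.

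From Theorem~\ref{thm21} we have $\norm{u}_{\infty}\le e^{1/2}$, so $\norm{u^{p-1}}_{\infty}\le e^{(p-1)/2}$, which is already of the required form. Standard $L^{q}$ elliptic regularity for the Neumann problem on the smooth domain $B_R$ yields, for every $q\in\intervaloo{1,+\infty}$,
\[
  \norm{u}_{W^{2,q}}\le C_q\bigl(\norm{u^{p-1}}_{L^q}+\norm{u}_{L^q}\bigr)\le \widetilde C_q\,e^{(p-1)/2},
\]
and picking $q>N$ followed by Morrey's embedding gives $\norm{u}_{C^{1,\alpha}}\le \widetilde C^{p-1}$ for some $\alpha\in\intervaloo{0,1}$. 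This handles the cases $k\le 2$.

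The induction step on $k$ is then routine: assuming $\norm{u}_{W^{j,q}}\le C_{j,q}^{p-1}$ for every $j\le k$ and every $q\in\intervalcc{1,+\infty}$, one estimates $\norm{u^{p-1}}_{W^{k-1,q}}$ by expanding derivatives via the Faà di Bruno formula. Each derivative of $u^{p-1}$ of order $j\le k-1$ is a finite sum of terms of the shape
\[
  \operatorname{poly}(p)\cdot u^{p-1-m}\cdot\prod_{i} D^{\beta_i}u,
\]
with $m\le k-1$ and $\sum_i\abs{\beta_i}=j$. The factor $u^{p-1-m}$ is controlled by $e^{(p-1)/2}$, and the derivative factors are controlled via the induction hypothesis using a Hölder splitting (one factor in $L^q$, the others in $L^\infty$, with the $L^\infty$ norms of lower-order derivatives bounded by higher Sobolev norms through Morrey). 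Each term is thus bounded in $L^q$ by $\operatorname{poly}(p)\cdot e^{(p-1)/2}\cdot\prod C^{p-1}$, giving $\norm{u^{p-1}}_{W^{k-1,q}}\le \widetilde C^{p-1}$, after which elliptic regularity delivers $\norm{u}_{W^{k+1,q}}\le C_{k+1,q}^{p-1}$.

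The only real subtlety, and the step that must be checked with some care, is that the combinatorial expressions produced by Faà di Bruno remain inside the class of bounds of the form $\{C^{p-1}:C>0\}$. This rests on the elementary fact that for any fixed $a,b\ge 0$, the expression $p^{a}e^{bp}$ is dominated by $C^{p-1}$ on $\intervaloo{2,+\infty}$ once $C$ is large enough: indeed $(p^{a}e^{bp})^{1/(p-1)}\to e^{b}$ as $p\to\infty$ and stays bounded on compact subintervals of $\intervaloo{2,+\infty}$. Finite products of such expressions inherit the same form, which is precisely what is required to close the induction.
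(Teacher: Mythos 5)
Your bootstrap from the $L^\infty$ bound of Theorem~\ref{thm21} via elliptic regularity, proceeding by induction on the order of differentiation, is exactly the approach the paper takes; where the paper simply asserts ``the proof follows by induction,'' you spell out the Fa\`a di Bruno step. One caution worth recording: when $p-1-m<0$, which can happen for $p$ close to $2$ once $m\ge 2$, the factor $u^{p-1-m}$ is \emph{not} controlled by the sup norm of $u$ alone and would require a positive lower bound on~$u$ --- a subtlety that the paper's terse argument also leaves implicit.
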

\begin{proof}
  Since $u(r)\le C$ for any $r\ge 0$, we infer that
  \begin{equation*}
    \Bigl(\int_{\Omega} \bigabs{u^{p-1}}^{q}\Bigr)^{1/q}
    \le \abs{\Omega}^{1/q} C^{p-1}.
  \end{equation*}
  It follows from elliptic regularity (see~\cite[Lemma 2.2]{nt}) that
  for every $q>1$, there
  exists $K>0$ such that $\norm{u}_{W^{2,q}}\le K^{p-1}$ and the proof
  follows by induction.
\end{proof}

To handle Problem~\eqref{pblE}, we need to extend the previous
bounds to this case.

\begin{Prop}
  \label{apriori:bound:small:diffus}
  Assume $f$ is of class $\C^{k}$, $k\ge 0$, and \eqref{F2} holds.  For
  any $q\ge 1$ and any $\varepsilon_0>0$, there exists $C>0$ such
  that if $u$ is a classical radial solution of Problem~\eqref{pblE}
  with $u(0)\le u_0$ and $\varepsilon\le\varepsilon_0$, then
  \begin{equation*}
    \norm{u}_{W^{k+2,q}}\le \varepsilon^{-1} C.
  \end{equation*}
\end{Prop}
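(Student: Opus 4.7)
The plan is to extend the Hamiltonian argument of Theorem~\ref{thm21} and the bootstrap of Corollary~\ref{cor21} to the small-diffusion setting, the new ingredient being~\eqref{F2} in place of the explicit pure-power structure used there. In radial coordinates, \eqref{pblE} reads $-\varepsilon\bigl(u''+\tfrac{N-1}{r}u'\bigr)+u=f(u)$ with $u'(0)=u'(R)=0$.

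I would introduce the modified Hamiltonian $H(r):=\tfrac{\varepsilon}{2}u'(r)^{2}+F(u(r))-\tfrac{1}{2}u(r)^{2}$ and check, by multiplying the equation by $u'$, that $H'(r)=-\varepsilon\tfrac{N-1}{r}u'(r)^{2}\le 0$. Hence $H(r)\le H(0)=F(u(0))-\tfrac{1}{2}u(0)^{2}\le A:=\max_{s\in\intervalcc{0,u_{0}}}(F(s)-s^{2}/2)$, and by~\eqref{F2} one has $A<L:=\varliminf_{s\to+\infty}(F(s)-s^{2}/2)$. Picking $A'\in\bigintervaloo{A,L}$ and $s_{0}$ (depending only on $f$ and $u_{0}$) such that $F(s)-s^{2}/2>A'$ for $s\ge s_{0}$, the identity $\tfrac{\varepsilon}{2}u'(r)^{2}=H(r)-F(u(r))+\tfrac{u(r)^{2}}{2}$ forces $u(r)<s_{0}$ everywhere; otherwise its right-hand side would be at most $A-A'<0$, contradicting positivity. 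This yields an $\varepsilon$-independent bound $\|u\|_{\infty}\le s_{0}$, and the same inequality controls $\|u'\|_{\infty}\le C\varepsilon^{-1/2}$.

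For the $W^{k+2,q}$ estimate, I would rewrite the equation as $-\Delta u=\varepsilon^{-1}(f(u)-u)$ and iterate elliptic regularity for the Neumann problem, as in the proof of Corollary~\ref{cor21} (via \cite[Lemma~2.2]{nt}). The uniform $L^{\infty}$ bound on $u$ gives $\|f(u)-u\|_{L^{q}}\le C$, hence $\|u\|_{W^{2,q}}\le C\varepsilon^{-1}$. For $k\ge 1$, one differentiates the equation to obtain relations such as $-\varepsilon\Delta(\partial_{e}u)+(1-f'(u))\partial_{e}u=0$ for first derivatives (and analogous ones at higher order, well-defined since $f\in\C^{k}$), then iterates elliptic regularity while controlling the chain-rule products through Sobolev embeddings. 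The main obstacle is the careful tracking of the $\varepsilon$-dependence: each differentiation produces products of derivatives of $u$ whose size is governed by the Hamiltonian bound $\|u'\|_{\infty}\le C\varepsilon^{-1/2}$, and closing the induction so that the final estimate still has the single factor $\varepsilon^{-1}C$ requires absorbing the accumulated $\varepsilon^{-1/2}$ factors into the constant through $\varepsilon\le\varepsilon_{0}$, together with a clean handling of the coordinate singularity at $r=0$ (where $u'(0)=0$ keeps $\tfrac{N-1}{r}u'$ well-defined in the limit).
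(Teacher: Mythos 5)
The $L^\infty$ bound in your proposal matches the paper's proof: the authors derive, exactly as you do, the decay of the Hamiltonian $\varepsilon\tfrac{|u'|^2}{2} + F(u) - \tfrac{u^2}{2}$ from its initial value $F(u(0)) - \tfrac{u(0)^2}{2} \le M := \max_{s\in[0,u_0]}\bigl(F(s) - \tfrac{s^2}{2}\bigr)$, and then invoke \eqref{F2} to produce a uniform $L$ with $u \le L$. Your elaboration (choosing $A' \in \intervaloo{M, L}$ and $s_0$, then arguing the positivity of $\tfrac{\varepsilon}{2}(u')^2$ is contradicted if $u$ ever reaches $s_0$) is a correct unfolding of the same step.

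On the elliptic regularity, a small discrepancy: you write the equation as $-\Delta u = \varepsilon^{-1}\bigl(f(u)-u\bigr)$, whereas the paper's proof rewrites it as $\varepsilon(-\Delta + \id)u = f(u) - (1-\varepsilon)u$, precisely so that the operator $-\Delta+\id$, which is invertible under Neumann conditions, can be fed to \cite[Lemma~2.2]{nt}. Since $-\Delta$ alone has the constants in its kernel under Neumann conditions, your formulation needs an extra appeal to the uniform $L^q$ bound on $u$ to close the Calder\'on--Zygmund estimate; this is available from the $L^\infty$ bound, so the difference is cosmetic but worth flagging.

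The genuine gap lies in your induction for $k\ge 1$. You correctly notice that differentiating $f(u)$ brings factors of $\nabla u$, whose size is only controlled by $\norm{u'}_\infty \le C\varepsilon^{-1/2}$, so a naive bootstrap accumulates an extra $\varepsilon^{-1/2}$ per derivative order and yields $\norm{u}_{W^{j+2,q}} = O(\varepsilon^{-1-j/2})$. But your proposed remedy --- ``absorbing the accumulated $\varepsilon^{-1/2}$ factors into the constant through $\varepsilon\le\varepsilon_0$'' --- goes in the wrong direction: for $\varepsilon\le\varepsilon_0$, a negative power $\varepsilon^{-1/2}$ is bounded \emph{below} by $\varepsilon_0^{-1/2}$ and blows up as $\varepsilon\to0$, so it cannot be absorbed into a constant independent of $\varepsilon$. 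As stated, this step does not close. (The paper's own proof is terse here --- after the $W^{2,q}$ estimate it merely says ``the proof follows by induction'' without tracking the $\varepsilon$-powers --- so you have put your finger on a real subtlety, but the mechanism you propose to resolve it is not valid.)
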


\begin{proof}
  The equation writes
  \begin{equation*}
    -\varepsilon \Bigl(u'' +\frac{N-1}{r}u' \Bigr) + u = f(u).
  \end{equation*}
  Arguing as in the proof of
  Theorem~\ref{thm21} and using assumption \eqref{F2}, one deduces that
  \begin{equation*}
    \varepsilon\frac{|u'(r)|^{2}}{2 } + F(u(r)) - \frac{u^2(r)}{2}
    \le F(u(0)) - \frac{u^2(0)}{2}
    \le M,
  \end{equation*}
  where $M:=\max_{s\in[0,u_0]}\bigl(F(s) -\frac{s^2}{2}\bigr)$.
  Since
  \begin{equation*}
    \varliminf_{s\to +\infty} \Bigl(F(s) -\frac{s^2}{2}\Bigr) > M,
  \end{equation*}
  it follows that
%
 there exists a constant $L>0$ such that
  \begin{equation*}
    u(r)\le \max_{r\in\mathopen[0,R]} u(r)
    \le L.
  \end{equation*}
  As $f$ is continuous, there exists $K>0$, depending on
  $\varepsilon_0$, such that
  \begin{equation*}
    \biggl(\int_{\Omega} \Bigl|\frac{f(u)-
    (1-\varepsilon)  u}{\varepsilon} \Bigr|^{q} \biggr)^{1/q}
    \le \varepsilon^{-1} |\Omega|^{1/q} K.
  \end{equation*}
 Since $\varepsilon(- \Delta u + u) = f(u) - (1-\varepsilon) u
 $, it follows from elliptic regularity (see~\cite[Lemma 2.2]{nt})
 that for every $q>1$, there exists
  $C>0$ independent of $\varepsilon$ such that
  \begin{equation*}
    \norm{u}_{W^{2,q}} \le \varepsilon^{-1} C
  \end{equation*}
  and the proof follows by induction.
\end{proof}

The next lemma is in the spirit of \cite[Lemma 3.5]{Lin-Ni}. It will
be useful in the bifurcation analysis.

\begin{Lem}
  \label{Lem:uniqeps}
  Assume $f$ is continuous, \eqref{F0}, and \eqref{F2} holds.  Then there exists
  $\overline{\varepsilon} > 0$ such that if $u$ is a
  non constant nonnegative classical
  radial solution of Problem~\eqref{pblE} with
  $\varepsilon \ge \overline{\varepsilon}$, then $u(0)>u_0$.
\end{Lem}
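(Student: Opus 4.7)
The plan is to argue directly: show that any non-constant nonnegative classical radial solution $u$ of~\eqref{pblE} with $u(0)\le u_0$ forces $\varepsilon$ to lie below an explicit threshold $\overline\varepsilon$, so the lemma follows by contrapositive. The first step is to reuse the Hamiltonian argument from the proof of Proposition~\ref{apriori:bound:small:diffus} to produce a uniform $L^\infty$ bound $\|u\|_\infty \le L$ depending only on $f$ and $u_0$: the quantity $r \mapsto \varepsilon |u'(r)|^2/2 + F(u(r)) - u(r)^2/2$ is non-increasing on $[0,R]$ and bounded at $r=0$ by $M := \max_{s\in[0,u_0]}(F(s) - s^2/2)$, and assumption \eqref{F2} prevents $u(r)$ from entering the set $\{s : F(s) - s^2/2 > M\}$.

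Next, I would set $\bar u := |B_R|^{-1}\int_{B_R} u$ and $w := u - \bar u$. Averaging the equation and using the Neumann boundary condition gives $\bar u = |B_R|^{-1}\int_{B_R} f(u)$, so subtraction yields
\[
-\varepsilon \Delta w + w = f(u) - |B_R|^{-1}\int_{B_R} f(u) \quad\text{in } B_R,
\]
still with a homogeneous Neumann boundary condition. Testing against $w$ and using $\int_{B_R} w = 0$ twice to eliminate constants on the right,
\[
\varepsilon \int_{B_R} |\nabla w|^2 + \int_{B_R} w^2 = \int_{B_R}\bigl(f(u) - f(\bar u)\bigr)\, w.
\]
In the $\C^1$ framework of Theorem~\ref{intro2}, setting $K := \max_{s\in[0,L]} |f'(s)|$, the pointwise bound $|f(u) - f(\bar u)| \le K|w|$ gives $\int_{B_R}(f(u) - f(\bar u))w \le K \int_{B_R} w^2$. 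Combining with the zero-mean radial Poincaré inequality $\int_{B_R} |\nabla w|^2 \ge \lambdarad_2(B_R) \int_{B_R} w^2$ yields
\[
\bigl(\varepsilon\, \lambdarad_2(B_R) + 1\bigr) \int_{B_R} w^2 \le K \int_{B_R} w^2.
\]
Non-constancy of $u$ gives $\int_{B_R} w^2 > 0$, forcing $\varepsilon \le (K-1)/\lambdarad_2(B_R)$; choosing $\overline\varepsilon := \max\bigl(0,\, (K-1)/\lambdarad_2(B_R)\bigr)$ completes the argument.

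The main subtlety lies in the pointwise estimate $|f(u) - f(\bar u)| \le K|w|$, which requires Lipschitz regularity on $[0,L]$ rather than the mere continuity stated in the hypotheses. Under the $\C^1$ assumption that governs the whole of Section~\ref{sec:small diffusion} this is automatic; with only continuity one would obtain an estimate involving the modulus of continuity of $f$, which is too weak to close the rigidity step and would require a fundamentally different argument.
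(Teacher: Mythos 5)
Your proof is correct and follows essentially the same route as the paper: an a priori $L^\infty$ bound via the Hamiltonian estimate, the mean-free decomposition $u=\bar u+\tilde u$, testing against $\tilde u$, and the Poincar\'e inequality to force $\tilde u=0$ for $\varepsilon$ large. You do make explicit a step the paper glosses over — the passage from $\int f(u)\tilde u$ to $K\int\tilde u^2$ by first subtracting $f(\bar u)$ and then using local Lipschitz continuity of $f$ on $[0,L]$ — and you correctly observe that this needs the $\C^1$ regularity already assumed throughout Section~\ref{sec:small diffusion}, not the bare continuity stated in the lemma.
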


\begin{proof}
  Assume by contradiction that $u(0)\le u_0$ and $u$ is not constant. Proposition \ref{apriori:bound:small:diffus} implies that $u$ is uniformly
  bounded for $\varepsilon\ge 1$. Writting $u=\bar u +\tilde
  u$ and multiplying the equation by $\tilde u$, we get
  \begin{equation*}
    \varepsilon\int_{B_R}|\nabla\tilde u|^2+\int_{B_R}|\tilde u|^2
    = \int_{B_R} f(u)\tilde u.
  \end{equation*}
  Since $u$ is a priori bounded, we infer that
  there exists $C>0$ such that
  $$ \varepsilon\int_{B_R}|\nabla\tilde u|^2 \le C\int_{B_R}|\tilde u|^2,$$
  which obviously implies that $\tilde u=0$ when $\varepsilon >
  \lambda_2 /C$, whence a contradiction.
\end{proof}

\section{Bifurcation analysis}
\label{bifu}

Since $u$ is a solution of~\eqref{pblP} on $\Omega$ iff
$x \mapsto \lambda^{-1/(p-2)} u(x/\sqrt{\lambda})$
solves~\eqref{pblP1} on $\Omega_{\lambda}:=\{\sqrt{\lambda} x \mid
x\in\Omega\}$,
we can fix $\lambda=1$ so that $u=1$ is a solution
of~\eqref{pblP}.
We consider the solvability of
\begin{equation*}
  \tag{\protect{$\mathcal{P}_{1,p}$}} \label{pblP1} \left\{
    \begin{aligned}
      -\Delta u+u&= \abs{ u}^{p-2}u,&&\text{ in } \Omega, \\
      \partial_{\nu}u&=0,&&\text{ on } \partial \Omega,
    \end{aligned}
  \right.
\end{equation*}
and we will check the positivity of the solutions a posteriori.
The solutions to Pro\-blem \eqref{pblP1} with $2<p<2^*$ can be seen
as the zeros of the Fr\'echet differential of the functional
\begin{equation*}
  \mathcal{E}_{p}:H^1(\Omega)\to\IR:u\mapsto\frac{1}{2}\int_{\Omega}
  \abs{\nabla u}^2+u^2-\frac{1}{p}\int_{\Omega}\abs{u}^p,
\end{equation*}
i.e., the zeros of the map
$$H^1(\Omega) \to (H^1(\Omega))' : u \mapsto \E_{p}'(u),$$
where the linear map
$\E_{p}'(u):H^1(\Omega)\to \IR : h \mapsto \E_{p}'(u)[h]$ is given by
\begin{equation*}
  \E_{p}'(u)[h]
  = \int_{\Omega}\nabla u\nabla h +  u h - \int_{\Omega}\abs{u}^{p-2} uh.
\end{equation*}
We consider $p$ as a unknown in the problem and we investigate the
bifurcation points along the trivial solution curve
$\{ (p,1) : p > 2 \} \subset \mathbb R^{+}\times H^1(\Omega)$
to~\eqref{pblP1}. We recall that a
point $(p^{*},1)$ is called a \emph{bifurcation point} if every
punctured neighborhood of
$(p^{*},1)$ contains a solution of~\eqref{pblP1}.
The Implicit Function Theorem implies that if $(p^{*},1)$ is a
bifurcation point, then the map
\begin{equation*}
  H^1(\Omega) \to (H^1(\Omega))' :
  \varphi \mapsto \E_{p^{*}}''(1)[\varphi, \_],
\end{equation*}
where
$\E_{p^{*}}''(1)[\varphi, \_] : H^1(\Omega)\to \IR : \psi \mapsto
\E_{p^{*}}''(1)[\varphi, \psi]$
is given by
\begin{equation*}
  \E_{p^{*}}''(1)[\varphi, \psi]
  = \int_{\Omega}
  \nabla \varphi\nabla \psi + (2- p^{*})\int_{\Omega}\varphi\psi,
\end{equation*}
is not an isomorphism.  This is the case if and only if
\begin{equation}
  \label{eq:p-bifurc}
  p^{*} = 2+\lambda_i(\Omega),\qquad \text{for some } i>1,
\end{equation}
where $0 = \lambda_1(\Omega) < \lambda_2(\Omega) < \cdots$
are the eigenvalues of
the operator $-\Delta$ with Neumann boundary conditions in $\Omega$.

Thanks to the fact that the problem has a variational structure, the
converse is also true~\cite{Bohme72, Krasnoselskii64, Marino73,
Rabinowitz77}, see also~\cite{brown,mw}. Namely, if $p^{*}$
satisfies~\eqref{eq:p-bifurc}, then $(p^{*},1)$ is a bifurcation
point. Moreover, standard arguments in degree theory imply that there
is actually a continuum of nontrivial solutions when the
dimension of the
eigenspace for $\lambda_i(\Omega)$ is odd.
A continuum $B$ of nontrivial
solutions which cannot be extended (i.e., a connected
component) is called a \emph{branch}.
If $\closure{B} \ni (2+\lambda_i, 1)$, we say that
$B$ bifurcates from $(2+\lambda_i, 1)$.
In this case, Rabinowitz's principle~\cite{rabibif} applies:
a branch $B$ bifurcating from
$(2+\lambda_i, 1)$ is unbounded in $\IR \times H^1$
or there exists an eigenvalue $\lambda_j \ne \lambda_i$
such that $(2 + \lambda_j, 1) \in \closure{B}$
(in which case we say that the branch is \emph{linked by pair}).

In order to be able to establish more properties of the bifurcating
branches, we now restrict ourselves to the case where $\Omega$ is a
ball $B_R$ of radius $R$.  Then, one has a precise knowledge of the
eigenspaces of $-\Delta$ which makes the analysis much simpler.

We already know that the first eigenvalue\footnote{Since we have now
  fixed the domain, we drop the dependance of the eigenvalues on the
  domain.} $\lambda_1$ equals $1$ and any associated eigenfunction is
constant.  Let $r = \abs{x}$ and $\theta = \frac{x}{\abs{x}} \in
\IS^{N-1}$.  By the
method of separation of variables, one concludes that all
eigenfunctions of $-\Delta$ with Neumann boundary conditions have the
form
\begin{gather}
  \label{eigg}
  u(x) = r^{-\frac{N-2}{2}} J_{\nu}(\sqrt{\lambda_i}r)
  P_k\Bigl(\frac{x}{\abs{x}}\Bigr),
  \qquad\text{where }
  \nu = k+\frac{N-2}{2},
\end{gather}
$J_\nu$ is the Bessel function of the first kind of order~$\nu$,
and $P_k:\IR^N\to\IR$ is an harmonic homogenous polynomial of degree
$k$ for some $k \in \IN$.  To satisfy the boundary conditions, the
corresponding eigenvalue $\lambda_{i} \ge 0$ of $-\Delta$ must
be such that $\sqrt{\lambda_{i}}R$ is a root of the map
\begin{equation*}
  z \mapsto (k - \nu) J_\nu(z) + z \partial J_\nu(z)
  = k J_\nu(z) - z J_{\nu+1}(z).
\end{equation*}
In other words,
each of the infinitely many real roots $z_{k,\ell}$, $\ell \ge 1$,
of this function gives rise to the
eigenvalue $z_{k,\ell}^2 / R^2$ of $-\Delta$.
Radial eigenfunctions correspond to $k=0$. For each eigenspace $E_i$,
the dimension of its intersection with radial functions is $0$
or~$1$. Moreover, for functions in $E_i$ with $k=0$, one notices that
the zeros of those functions are simple.

The remaining of this section is devoted to the study of radial
bifurcations. We say that $(p^{*},1)$ is a \emph{radial bifurcation
point}
if every punctured neighborhood of $(p^{*},1)$ contains radial solutions.
In the sequel, we denote by $0 = \lambdarad_1 < \lambdarad_2 < \lambdarad_3
< \cdots$ the eigenvalues of $-\Delta$ whose eigenspaces contain radial
eigenfunctions.

\subsection{Radial bifurcations in
  \texorpdfstring{$\C^{2,\alpha}$}{C²ᵅ}}
\label{Section:radialbif}

For $k\ge 0$, we denote by
$\C^{k,\alpha}_{\textup{rad}}(\Bar B_R)$ the space
$\C^{k,\alpha}(\Bar B_R)$ restricted to radially invariant functions and
\begin{equation*}
  \Tilde{\C}^{2,\alpha}_{\textup{rad}}(\Bar B_R)
  := \bigl\{u\in \C^{2,\alpha}_{\textup{rad}}(\Bar B_R) \bigm|
  \partial_{r}u(R) = 0 \bigr\}.
\end{equation*}
If we define $-\Delta$ on the space
$\Tilde{\C}^{2,\alpha}_{\textup{rad}}(\Bar B_R)$, then the spectrum is
made of the increasing sequence
$0 = \lambdarad_1 < \lambdarad_2 < \lambdarad_3
< \cdots$ of simple eigenvalues.

The function $u\in \Tilde{\C}^{2,\alpha}_{\text{rad}}$ is a classical
solution of Problem~\eqref{pblP1} with $\Omega=B_{R}$ if and only if
the couple $(p,u)$ is a zero of the function
\begin{equation}\label{mapF}
  F: \IR \times \Tilde{\C}^{2,\alpha}_{\text{rad}} \to
  {\C}^{0,\alpha}_{\text{rad}} : (p, u) \mapsto
  (-\Delta + \id) u - \abs{u}^{p-2} u.
\end{equation}
It is easily seen that
\begin{equation}\label{duF}
  \partial_uF(2+\lambda_i, 1)[v] = (-\Delta + \id) v -  \lambda_i v,
\end{equation}
so that  classical bifurcation theory implies
that if $E_i\cap\Tilde{\C}^{2,\alpha}_{\text{rad}} =\{0\}$, then
$(2+\lambda_i,  1)$ is not a  radial bifurcation point in
$\Tilde{\C}^{2,\alpha}_{\textup{rad}}$ whereas
if $E_i\cap \Tilde{\C}^{2,\alpha}_{\text{rad}}  \ne \{0\}$, then
$(2+\lambda_i, 1)$ is a bifurcation point in
$\Tilde{\C}^{2,\alpha}_{\textup{rad}}$.
We will improve this first vague result by studying the local behavior
of the bifurcations branches from $(2+\lambdarad_i, 1)$
in $\Tilde{\C}^{2,\alpha}$.
We will use the celebrated Crandall-Rabinowitz
theorem~\cite[Theorem~1.7 and 1.18]{Crandall-Rabinowitz71}
(see also~\cite{ambroprodi}) in a form that we recall first.

\begin{Prop}[Crandall-Rabinowitz]\label{bifurc-simple}
  Let $X$ and $Y$ be two Banach spaces, $p^* \in \IR$ and $u^* \in
  X$. Assume $F : \IR \times X\to Y: (p, u)\mapsto F(p,u)$ is such
  that
  \begin{enumerate}[(i)]
  \item $F(p, u^*)=0$ for any $p$ in a neighborhood of $p^*$;
  \item the partial derivatives $\partial_pF$, $\partial_uF$ and
    $\partial_{pu}F$ exist and are continuous in a neighborhood of
    $(p^*, u^*)$;
  \item $\ker\bigl(\partial_u F(p^*,u^*)\bigr)$ is one-dimensional
    and is thus spanned by some $\phi^* \in X\setminus\{0\}$;
  \item $\Ran\bigl(\partial_u F(p^*,u^*)\bigr)$ has codimension $1$
    and is thus the kernel $\{ y \in Y : \langle \psi, y \rangle = 0
    \}$ of some continuous linear functional $\psi : Y \to \IR$.
  \end{enumerate}
  Then the following assertions hold.
  \begin{enumerate}[(1)]
  \item If
    $$a := \langle\psi, \partial_{pu}F(p^*,u^*)[\phi^*] \rangle \ne 0,$$
    then $(p^*,u^*)$ is a bifurcation point for $F$.  In addition, the
    set of nontrivial solutions of $F=0$ in a neighborhood of
    $(p^*,u^*)$ is given by a unique continuous curve $s \mapsto
    (\bar p(s), \bar u(s))$ defined for $s$ close to~$0$.
    More precisely $(\bar p(0), \bar u(0)) = (p^*, u^*)$,
    $\bar u$ is of class $\C^1$,
    $\partial_s \bar u(0) = \phi^*$,
    and, for all $(p,u)$ in a neiborhood of $(p^*, u^*)$,
    $$\bigl(F(p,u) = 0 \land u \ne u^* \bigr) \liff \exists\, s \ne 0,\
    \linebreak[2]
    (p,u) = (\bar p(s), \bar u(s)).$$
    If $\partial_{u}^2F$ exists and is continuous, the curve is of
    class~$\C^1$.
  \item Assuming $a\ne0$, if $\partial^2_uF$ is continuous and
    $$b := - \frac{1}{2a}
    \bigl\langle\psi, \partial^2_uF(p^*,u^*)[\phi^*,\phi^*]
    \bigr\rangle \ne 0,$$ then the bifurcation point is
    \emph{transcritical} and the nontrivial solution curve
    can be (locally) written $(p, u_p)$ with
    \begin{equation}
      \label{eq:transcritical}
      u_{p} = u^* + \frac{p - p^*}{b} \phi^* + o(p - p^*).
    \end{equation}

  \item Assuming $a\ne 0$, $b=0$ and $\partial^3_uF$ is continuous, if
    \begin{equation*}
      c := -\frac{1}{6a} \Bigl(
      \bigl\langle\psi, \partial_u^3F(p^*,u^*)[\phi^*,\phi^*,\phi^*]
      \bigr\rangle
      + 3 \bigl\langle\psi, \partial_u^2F(p^*,u^*)[\phi^*,w] \bigr\rangle
      \Bigr)\ne 0,
    \end{equation*}
    where $w \in X$ is any solution of the equation
    \begin{math}
      \partial_u F(p^*,u^*)[w] = - \partial^2_uF(p^*,u^*)[\phi^*,\phi^*],
    \end{math} %
    we have
    \begin{equation*}
      u_p = u^* \pm \Bigl( \frac{p-p^*}{c}\Bigr)^{1/2} \phi^*
      + o \bigl({\abs{p - p^*}}^{1/2}\bigr).
    \end{equation*}
    In particular, the bifurcation point is \emph{supercritical} if
    $c > 0$ and \emph{subcritical} if $c < 0$.
  \end{enumerate}
\end{Prop}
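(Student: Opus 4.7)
My approach is the classical Lyapunov--Schmidt reduction, which is essentially the standard way to prove the Crandall--Rabinowitz theorem. The assumptions are tailored to make this work.

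\textbf{Setup and reduction.} By (iii) and (iv), write $X = \spanned{\phi^*} \oplus X_1$ and $Y = Y_0 \oplus Y_1$, where $Y_0 = \Ran\bigl(\partial_u F(p^*,u^*)\bigr) = \ker \psi$ and $Y_1$ is a one-dimensional complement. Let $Q : Y \to Y_0$ be the associated continuous projector. Any $u$ near $u^*$ is uniquely $u = u^* + s\phi^* + v$ with $s \in \IR$ and $v \in X_1$. The equation $F(p,u) = 0$ then splits into a pair
\begin{gather*}
  G(p,s,v) := Q F(p, u^* + s\phi^* + v) = 0, \\
  \Phi(p,s) := \langle \psi,\, F(p, u^* + s\phi^* + v)\rangle = 0.
\end{gather*}
Since $\partial_v G(p^*,0,0) = Q\,\partial_u F(p^*,u^*)|_{X_1}$ is an isomorphism from $X_1$ onto $Y_0$, the Implicit Function Theorem yields a unique solution $v = v(p,s)$ of the first equation, with $v(p,0) = 0$ (as $F(p,u^*) = 0$) and the same regularity as $F$.

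\textbf{The bifurcation equation.} Substituting $v(p,s)$ into the scalar equation gives $\Phi(p,s) = 0$ with $\Phi(p,0) = 0$, so by Hadamard's lemma $\Phi(p,s) = s\,\Psi(p,s)$ for a continuous $\Psi$ with the regularity of $F$. Nontrivial solutions correspond to $\Psi(p,s) = 0$. Differentiating $G = 0$ once in $s$ at $(p^*,0)$ and using $\partial_u F(p^*,u^*)[\phi^*] = 0$ gives $\partial_s v(p^*,0) = 0$, so that
\begin{equation*}
  \partial_p \Psi(p^*,0) = \partial_{ps}\Phi(p^*,0) = \langle \psi,\, \partial_{pu}F(p^*,u^*)[\phi^*]\rangle = a.
\end{equation*}
If $a \ne 0$, a second application of the Implicit Function Theorem to $\Psi$ yields a unique curve $s \mapsto \bar p(s)$ with $\bar p(0) = p^*$ solving $\Psi(\bar p(s), s) = 0$. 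Setting $\bar u(s) := u^* + s\phi^* + v(\bar p(s), s)$ produces the branch; its uniqueness follows from the uniqueness in both applications of the IFT, and $\partial_s \bar u(0) = \phi^*$.

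\textbf{Local expansions.} To read off the transcritical/pitchfork behaviour, Taylor expand $\Psi$ at $(p^*,0)$. Differentiating $\Psi(\bar p(s),s) = 0$ twice at $s=0$, together with $\partial_s^n \Phi(p^*,0) = n\,\partial_s^{n-1}\Psi(p^*,0)$, yields
\begin{equation*}
  \bar p'(0) = -\frac{1}{2a}\langle \psi,\, \partial_u^2 F(p^*,u^*)[\phi^*,\phi^*]\rangle = b.
\end{equation*}
If $b \ne 0$, invert $p = \bar p(s) = p^* + bs + o(s)$ to obtain $s = (p-p^*)/b + o(p-p^*)$, giving the transcritical expansion~\eqref{eq:transcritical}. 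If $b = 0$, then $\langle \psi, \partial_u^2 F[\phi^*,\phi^*]\rangle = 0$ means $\partial_u^2 F[\phi^*,\phi^*] \in \Ran(\partial_u F(p^*,u^*))$, so the equation defining $w$ is solvable. Differentiating $G = 0$ twice in $s$ at $(p^*,0)$ identifies $\partial_s^2 v(p^*,0)$ with such a $w$ (modulo the kernel, which drops out when paired with $\psi$). A third derivative then produces $\bar p''(0) = 2c$, and inverting $p - p^* = c s^2 + o(s^2)$ yields the pitchfork form.

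\textbf{Main obstacle.} Conceptually nothing is deep; the only delicate point is the chain-rule bookkeeping when differentiating $v(p,s)$ implicitly, in particular verifying $\partial_s v(p^*,0) = 0$ (so $\phi^*$ appears alone in $\partial_p \Psi$) and identifying $\partial_s^2 v(p^*,0)$ with an admissible $w$ in the $b=0$ case. These computations must be carried out carefully, but they reduce to scalar equations after pairing with $\psi$, and all the other terms vanish for structural reasons.
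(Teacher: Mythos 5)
The paper does not prove this proposition: it is explicitly recalled from Crandall--Rabinowitz (Theorem 1.7 and 1.18 of their 1971 paper) and Ambrosetti--Prodi, so there is no in-paper proof to compare against. Your Lyapunov--Schmidt reduction is precisely the standard route those references take, and the computations check out: the identity $\partial_s^n\Phi(p^*,0)=n\,\partial_s^{n-1}\Psi(p^*,0)$ from $\Phi=s\Psi$, the vanishing $\partial_s v(p^*,0)=0$, the identification $\bar p'(0)=-\partial_s\Psi(p^*,0)/a=b$, the solvability of the $w$-equation when $b=0$ (since then $\partial_u^2F[\phi^*,\phi^*]\in\ker\psi=\Ran\partial_uF$), the observation that replacing $w$ by $w+\alpha\phi^*$ does not change $\langle\psi,\partial_u^2F[\phi^*,w]\rangle$ when $b=0$, and finally $\bar p''(0)=2c$.

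One point you leave slightly informal is the regularity bookkeeping. Under hypothesis (ii) alone, $\Phi$ is $\C^1$ but $\Psi(p,s)=\int_0^1\partial_s\Phi(p,ts)\intd t$ is only continuous in $s$ while $\C^1$ in $p$; the relevant variant of the implicit function theorem (continuous $\Psi$, continuous nonvanishing $\partial_p\Psi$) then yields a merely \emph{continuous} curve $\bar p$, and it is the additional hypothesis that $\partial_u^2F$ is continuous that upgrades $\Psi$ to $\C^1$ in $s$ and hence $\bar p$ to $\C^1$ as asserted in item (1). Saying $v$ and $\Psi$ inherit ``the same regularity as $F$'' glosses over the fact that $\Psi$ loses one $s$-derivative relative to $\Phi$, which is exactly what drives the two-tier regularity claim in the statement. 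This is a bookkeeping refinement, not a gap in the argument.
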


\begin{figure}[htb]
  \null\hfil
  \begin{minipage}[b]{0.3\linewidth}
    \begin{tikzpicture}[x=4mm,y=4mm]
      \draw[color=black] (2.5,-2.5) node[anchor=north] {\strut
        Transcritical};
      \draw[->] (0,0) -- (5.5,0) node[below]{$p$};
      \draw[->] (0,-2) -- (0,2) node[left]{$u$};
      \draw[thick] (0.5,-1.5) .. controls (2.5, 0) and (3.5,0)
      .. (4.5,1.5);
      \draw[thick,fill] (2.9,0) circle(1pt) node[below]{$p^*$};
    \end{tikzpicture}
  \end{minipage}
  \hfil
  \begin{minipage}[b]{0.3\linewidth}
    \begin{tikzpicture}[x=4mm,y=4mm]
      \draw[color=black] (2.5,-2.5) node[anchor=north] {\strut
        Supercritical};
      \draw[->] (0,0) -- (5.5,0) node[below]{$p$};
      \draw[->] (0,-2) -- (0,2) node[left]{$u$};
      \draw[thick] plot[samples at={-1.6,-1.5,...,1.6}]
      (2.5 + \x * \x, \x);
      \draw[thick,fill] (2.5,0) circle(1pt)
      node[below left, xshift=3pt]{$p^*$};
    \end{tikzpicture}
  \end{minipage}
  \hfil
  \begin{minipage}[b]{0.3\linewidth}
    \begin{tikzpicture}[x=4mm,y=4mm]
      \draw[color=black] (2.5,-2.5) node[anchor=north] {\strut
        Subcritical};
      \draw[->] (0,0) -- (5.5,0) node[below]{$p$};
      \draw[->] (0,-2) -- (0,2) node[left]{$u$};
      \draw[thick] plot[samples at={-1.5,-1.4,...,1.6}]
      (2.5 - \x * \x, \x);
      \draw[thick,fill] (2.5,0) circle(1pt)
      node[below right, xshift=-3pt]{$p^*$};
   \end{tikzpicture}
  \end{minipage}
  \hfil\null

  \vspace{-1ex}
  \caption{Type of bifurcations.}
  \label{fig:bifurcation-types}
\end{figure}
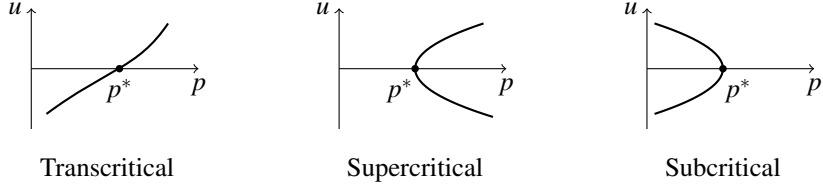

We now apply these statements to our problem. We still consider the
map \eqref{mapF}.
We fix $(p^{*},u^{*})=(2+\lambdarad_i,1)$. Assumption (i) is clear
while (ii) can be checked with standard arguments. We deduce from
\eqref{duF} that
$$\ker\bigl(\partial_uF(2 + \lambdarad_i, 1)\bigr)
= \langle \phi_i\rangle,$$
where we can assume that $\phi_i$ is the unique radial eigenvalue of
$-\Delta$ associated to $\lambdarad_i$, normalized in $L^{2}(B_{R})$.
By the Fredholm alternative, we also have
$$\codim\bigl(\Ran(\partial_uF(2+\lambdarad_i, 1))\bigr) = 1$$
and
$f\in \Ran\bigl(\partial_uF(2+\lambdarad_i, 1) \bigr)$ if and only if
$\int_{B_R} f \phi_i = 0$,
so that one can take
\begin{equation*}
  \psi : {\C}^{0,\alpha}_{\text{rad}} \to \IR :
  f\mapsto \langle\psi, f\rangle
  := \int_{B_R} f \phi_i.
\end{equation*}
Simple computations show that
\begin{equation}\label{a}
  a = \int_{B_R} \partial_{pu}F(2+\lambdarad_i,1)[\phi_i]\phi_i =
  -\int_{B_R}\phi_i^2  = -1
\end{equation}
and
\begin{align}\label{b}
  b = -\frac{1}{2a}
  \int_{B_R} \partial^2_{u}F(2+\lambdarad_i, 1)[\phi_i,\phi_i]\phi_i
  \intd x
  = -\frac{1}{2}
  (1 + \lambdarad_i) \lambdarad_i \int_{B_R} \phi_i^3 .
\end{align}
In order to compute $b$, we will use the following property of
Bessel's functions which is in fact the key in our analysis.

\begin{Lem}
  \label{lemma:integ>0}
  Let $\nu \ge 1/2$, $\beta > 0$ and $\alpha \in \intervaloc{-1 - \nu \beta,
    \beta/2}$. If $\nu = 1/2$, assume further that $\alpha < \beta/2$.
  Then for every $x > 0$, we have
  \begin{equation}
    \label{eq:intJ>0}
    \int_0^x s^\alpha J_\nu^\beta(s) \intd s > 0,
  \end{equation}
  where $J_\nu^\beta(s)$ means $\sign\bigl(J_\nu(s)\bigr)
  \abs{J_\nu(s)}^\beta$.
\end{Lem}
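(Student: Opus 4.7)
The plan is to reduce the claim to a monotonicity property of the ``hump integrals'' of $J_\nu$ between consecutive zeros, and then establish that monotonicity by a Sturm--Picone / Wronskian comparison. The crucial substitution is $\phi(s) := \sqrt{s}\, J_\nu(s)$, which satisfies the simpler linear ODE
\begin{equation*}
  \phi''(s) + p(s)\, \phi(s) = 0,
  \qquad
  p(s) := 1 - \frac{\nu^2 - 1/4}{s^2},
\end{equation*}
and yields the factorization $s^\alpha J_\nu^\beta(s) = s^{\alpha - \beta/2}\, \sign(\phi(s))\, |\phi(s)|^\beta$. For $\nu \ge 1/2$, $p$ is non-decreasing with $p \le 1$ (strictly increasing if $\nu > 1/2$); the hypothesis $\alpha \le \beta/2$ makes the weight $s^{\alpha - \beta/2}$ non-increasing (strictly if $\alpha < \beta/2$).

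Let $0 = j_{\nu,0} < j_{\nu,1} < j_{\nu,2} < \cdots$ be the non-negative zeros of $J_\nu$ and put
\begin{equation*}
  H_k := \int_{j_{\nu,k}}^{j_{\nu,k+1}} s^\alpha\, |J_\nu(s)|^\beta \intd s,
  \qquad
  \ell_k := j_{\nu,k+1} - j_{\nu,k}.
\end{equation*}
The asymptotics $J_\nu(s) \sim c_\nu s^\nu$ as $s \to 0^+$ together with $\alpha > -1 - \nu\beta$ give $H_0 < +\infty$. Since $\sign(J_\nu) = (-1)^k$ on $(j_{\nu,k}, j_{\nu,k+1})$, for any $x > 0$ with $j_{\nu,K} \le x < j_{\nu,K+1}$,
\begin{equation*}
  \int_0^x s^\alpha J_\nu^\beta(s) \intd s = \sum_{k=0}^{K-1} (-1)^k H_k + (-1)^K \tilde H_K
\end{equation*}
for some $\tilde H_K \in [0, H_K]$. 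Grouping this alternating sum in consecutive pairs $(H_{2j} - H_{2j+1})$ shows it is strictly positive, provided $(H_k)$ is strictly decreasing. Hence the lemma reduces to proving $H_{k+1} < H_k$ for every $k \ge 0$.

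To prove this, fix $k$, set $z := j_{\nu,k+1}$, and define $u_1(t) := (-1)^k \phi(z - t)$ on $[0, \ell_k]$ and $u_2(t) := (-1)^{k+1} \phi(z + t)$ on $[0, \ell_{k+1}]$. Both functions are non-negative on their intervals, vanish at $t = 0$, and solve
\begin{equation*}
  u_1'' + p(z - t)\, u_1 = 0,
  \qquad
  u_2'' + p(z + t)\, u_2 = 0,
\end{equation*}
with the same positive slope $u_1'(0) = u_2'(0) = (-1)^{k+1} \phi'(z) > 0$. Since $p$ is non-decreasing, $p(z - t) \le p(z + t)$, so the Wronskian $W := u_1 u_2' - u_1' u_2$ satisfies $W(0) = 0$ and $W' = (p(z - t) - p(z + t))\, u_1 u_2 \le 0$ whenever $u_1, u_2 > 0$. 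Thus $W \le 0$; combined with $(u_2/u_1)' = W/u_1^2$ and $u_2/u_1 \to 1$ as $t \to 0^+$, this forces $u_2 \le u_1$ pointwise on $(0, \ell_{k+1}]$. A standard contradiction (were $\ell_{k+1} > \ell_k$, one would get $u_2(\ell_k) > 0 = u_1(\ell_k)$, violating $u_2 \le u_1$) also yields $\ell_{k+1} \le \ell_k$.

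Combining these estimates, using $\alpha - \beta/2 \le 0$ so that $(z+t)^{\alpha - \beta/2} \le (z-t)^{\alpha - \beta/2}$ for $t > 0$, gives
\begin{equation*}
  H_{k+1} = \int_0^{\ell_{k+1}} (z + t)^{\alpha - \beta/2}\, u_2(t)^\beta \intd t
  \le \int_0^{\ell_{k+1}} (z - t)^{\alpha - \beta/2}\, u_1(t)^\beta \intd t
  \le H_k.
\end{equation*}
Strictness comes from one of the two comparisons: for $\nu > 1/2$ the inequality $p(z-t) < p(z+t)$ is strict, making $u_2 < u_1$ strictly; for $\nu = 1/2$ one has $p \equiv 1$ and $u_1 = u_2$, but the extra hypothesis $\alpha < \beta/2$ makes the weights strictly ordered. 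The main technical delicacy is the case $k = 0$, where the coefficient $p(z - t)$ has a singularity at $t = z$; however, the Wronskian argument is applied only on the subinterval $[0, \ell_{k+1}] \subseteq [0, \ell_k]$ where both $u_1$ and $u_2$ stay non-negative, so the singularity is never reached, and the conclusion $\ell_1 \le \ell_0$ itself is precisely what rules out this obstruction.
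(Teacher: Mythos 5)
Your proof is correct, and it takes a genuinely different route from the paper's. Both arguments ultimately rest on the fact that the successive signed ``humps'' of $s^\alpha J_\nu^\beta(s)$ form an alternating series with strictly decreasing terms, but the mechanism for proving that decrease is different. The paper uses the representation $J_\nu(x)=M_\nu(x)\cos\theta_\nu(x)$ together with Nicholson's integral formula for $p_\nu(x)=\tfrac{\pi}{2}x\,M_\nu^2(x)$: after the change of variable $t=\theta_\nu(x)$ the humps all have width $\pi$, and the problem reduces to showing that $x\mapsto x^\gamma p_\nu(x)$ decreases, which follows (for $\nu>1/2$) from the monotonicity of $t\mapsto\cosh(2\nu t)/\cosh t$ inside the Nicholson integral, while $\nu=1/2$ gives $p_\nu\equiv 1$ and requires $\gamma<0$. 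You instead apply the Liouville transformation $\phi(s)=\sqrt{s}\,J_\nu(s)$, putting the Bessel ODE in the normal form $\phi''+\bigl(1-(\nu^2-\tfrac14)/s^2\bigr)\phi=0$, and compare consecutive humps directly in the original variable by a Sturm/Wronskian argument centered at the common zero $z=j_{\nu,k+1}$: reflecting one half-wave across $z$, the monotonicity of the coefficient $p(s)=1-(\nu^2-\tfrac14)/s^2$ forces the outgoing half-wave to be narrower ($\ell_{k+1}\le\ell_k$) and pointwise smaller ($u_2\le u_1$), and the weight $s^{\alpha-\beta/2}$ is non-increasing since $\alpha\le\beta/2$. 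Strictness comes from $p$ being strictly increasing when $\nu>1/2$, and from the strict monotonicity of the weight when $\nu=1/2$, exactly matching the sharp hypothesis in the lemma. Your version is more elementary and self-contained --- it avoids the Nicholson integral and the DLMF identities entirely, at the cost of handling unequal hump widths and the coefficient singularity at $s=0$ on the first hump, which you address correctly by working on the open interval $(0,\min(\ell_k,\ell_{k+1}))$ where both $u_1,u_2>0$. Both proofs cover precisely the same range $\nu\ge1/2$ and share the obstruction at $\nu=0$ (i.e.\ $N=2$), where $p$ becomes decreasing and the hump comparison reverses.
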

\begin{proof}
  First note that the integral exists.  Indeed, since $J_\nu(x)$
  behaves like $x^{-\nu}$ as $x \to 0$, the integrant is integrable in
  a neighborhood of~$0$ iff $\alpha + \nu \beta > -1$.

  Next, recall that, for $\nu \ge 0$, the following
  representation of Bessel functions holds :
  \begin{equation}
    \label{eq:Jnu=sin}
    \forall x > 0,\qquad
    J_\nu(x) = \sqrt{\frac{2}{\pi x}} \, \sqrt{p_\nu(x)}
    \sin\biggl( \int_0^x \frac{\mathrm{d}\xi}{p_\nu(\xi)} \biggr),
  \end{equation}
  where
  \begin{equation*}
    p_\nu(x)
    := \tfrac{1}{2}\pi x \bigl( J_\nu^2(x) + Y_\nu^2(x) \bigr).
  \end{equation*}
  According to formulas
  (\href{http://dlmf.nist.gov/10.18.E4}{10.18.4}),
  (\href{http://dlmf.nist.gov/10.18.E6}{10.18.6}) and
  (\href{http://dlmf.nist.gov/10.18.E8}{10.18.8}) of~\cite{dlmf}, we
  have
  $$J_\nu(x) = M_\nu(x) \cos\bigl(\theta_\nu(x) \bigr),$$
  where $p_\nu(x) = \tfrac{1}{2} \pi x M_\nu^2(x) > 0$
  and $\partial_x \theta_\nu(x) = 1 /
  p_\nu(x)$.  Given that
  $$\theta_\nu(x) \to -\tfrac{1}{2}\pi,\ \text{ as }x
  \xrightarrow{>} 0,$$ one deduces\footnote{Since, when $\nu > 0$
    (resp.\ $\nu=0$), $p_\nu(x)$ behaves
    like $x^{1-2\nu}$ (resp.\ $x \ln^2 x$)
    as $x \to 0$, the function $1/p_\nu(x)$ is
    integrable in a neighborhood of $0$ for $\nu \ge 0$.} that
  $\theta_\nu(x) = -\tfrac{1}{2}\pi + \int_0^x {1}/{p_\nu(\xi)}
  \intd\xi$, hence the claimed formula (we also refer
  to~\cite{Lorch-Muldoon08}).

  Now, note that the formulas (\href{http://dlmf.nist.gov/10.7.E8}{10.7.8})
  in~\cite{dlmf} imply that
  $$\lim_{x \to +\infty}p_\nu(x)=1,$$
  whence
  $$\lim_{x \to +\infty} \int_0^x {1}/{p_\nu(\xi)} \intd\xi = + \infty.$$
  Using \eqref{eq:Jnu=sin} and performing the change of
  variables $t = \int_0^x {1}/{p_\nu(\xi)} \intd\xi$, the claim
  \eqref{eq:intJ>0} can be written
  \begin{equation*}
    \forall \tau > 0,\qquad
    \int_0^\tau \bigl(X(t)\bigr)^{\alpha - \beta/2}
    \bigl(p_\nu(X(t))\bigr)^{1 + \beta/2}  \sin^\beta(t) \intd t
    > 0
  \end{equation*}
  where $X : \intervalco{0, +\infty} \to \intervalco{0, +\infty}$ is
  such that $X(0) = 0$ and $\partial_tX(t) = p_\nu(X(t)) > 0$.
  Since the sine function is periodic, it is enough to show
  that $t \mapsto (X(t))^{\alpha - \beta/2}
  p_\nu^{1 + \beta/2}(X(t))$ is decreasing because then the
  integral on the interval $\intervalcc{2k\pi, (2k+1)\pi}$ will be
  greater than the negative contribution in the next interval
  $\intervalcc{2(k+1)\pi, (2k+2)\pi}$.
  As the function $X$ is increasing, it is equivalent to show that $x
  \mapsto x^{\alpha - \beta/2} \bigl(p_\nu(x)\bigr)^{1 + \beta/2}$ is
  decreasing, or, setting $\gamma = (\alpha - \beta/2)/(1 + \beta/2)$,
  that $x \mapsto x^\gamma p_\nu(x)$ is decreasing.

  According to formula (\href{http://dlmf.nist.gov/10.9.E30}{10.9.30})
  of~\cite{dlmf}, we have
  \begin{equation}
    \label{eq:Nicholson}
    p_\nu(x)
    = \tfrac{4}{\pi} x
    \int_0^\infty \cosh(2\nu t) K_0(2x \sinh t) \intd t
  \end{equation}
  where $K_0$ is the second modified Bessel function.  We have to
  distinguish between $\nu > 1/2$ and $\nu = 1/2$.

  Assume first $\nu > 1/2$. Performing the change of variable $s := x
  \sinh t$ in~\eqref{eq:Nicholson}, we get
  \begin{equation*}
    p_\nu(x)
    = \tfrac{4}{\pi} \int_0^\infty
    \frac{\cosh(2\nu t)}{\cosh t}\Bigr|_{t = \arcsinh(s/x)}
    K_0(2s) \intd s.
  \end{equation*}
  The function $t \mapsto {\cosh(2\nu t)}/{\cosh t}$ is
  increasing. Therefore the first term of the product is a decreasing
  function of~$x$. As $K_0 > 0$, so is $p_\nu$. It is therefore
  sufficient that $\gamma \le 0$ in this case.

  If $\nu = 1/2$, $p_{\nu}(x) = 1$ for all $x > 0$ (see e.g.\
  (\href{http://dlmf.nist.gov/10.43.E18}{10.43.18})
  in~\cite{dlmf}). It follows that the map $x \mapsto x^\gamma
  p_\nu(x)$ is decreasing iff $\gamma < 0$.
\end{proof}

\begin{Rem}
  When $\nu = 1/2$ (i.e., $N = 3$ for our application in upcoming
  Theorem~\ref{transcritical-radial-bifurc}), $J_\nu(x) =
  \sqrt{2/(\pi x)} \sin x$ and so the statement simplifies to
  \begin{equation*}
    \forall x > 0,\qquad
    \int_0^x x^{\alpha - \beta/2} \sin^\beta(x) \intd x > 0
  \end{equation*}
  which is true as soon as $x \mapsto x^{\alpha - \beta/2}$ is
  decreasing.  This is the only case where the assumptions of
  Lemma~\ref{lemma:integ>0} are sharp: for $\nu > 1/2$, the
  statement~\eqref{eq:intJ>0} remains true for some $\alpha >
  \beta/2$.
\end{Rem}
\begin{Rem}
  \label{transcritical-N=2}%
  When $\nu = 0$ (i.e., $N = 2$ for our application), although $p_0$
  is increasing and an asymptotic analysis around~$0$ shows that
  $x^\gamma p_0(x)$ is not decreasing whatever $\gamma$, numerics
  indicate that~\eqref{eq:intJ>0} is positive at least if $\alpha
  < 0.45 \beta - 0.15$, and in particular in the case of interest for
  Theorem~\ref{transcritical-radial-bifurc}: $\alpha = 1 - \nu = 1$
  and $\beta = 3$.

 \begin{figure}[htb]
  \centering
  \begin{tikzpicture}[x=1.4ex, y=28ex]
    \draw (0,0) -- (34.5, 0);
    \draw[->, dashed] (34,0) -- (38,0) node[below]{$z$};
    \draw[->] (0,0) -- (0, 0.7);
    \foreach \z in {0,2,...,34}{%
      \draw (\z,0.01) -- (\z, -0.01) node[below]{\footnotesize $\z$};
    }
    \draw (36,0.01) -- (36, -0.01) node[below]{\footnotesize $\infty$};
    \foreach \y in {0,1,...,6}{%
      \draw (0.2,0.\y) -- (-0.2,0.\y) node[left]{\footnotesize $0.\y$};
    }
    \begin{scope}[thick]
      \input{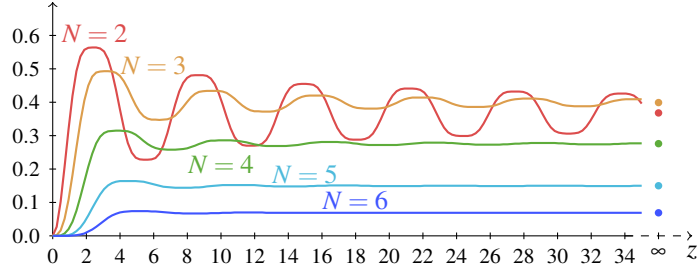}
    \end{scope}
    \node[color=n2, above] at (2.5,0.55) {$N=2$};
    \node[color=n3, right] at (3.5,0.51) {$N=3$};
    \node[color=n4, below right] at (7.5,0.27) {$N=4$};
    \node[color=n5, above] at (15,0.13) {$N=5$};
    \node[color=n6, above] at (18,0.05) {$N=6$};
   \end{tikzpicture}
   \caption{Plots of $z \mapsto  \int_0^z s^{1-\nu} J_{\nu}^3(s) \intd s$
     with $\nu= N/2-1$ and $N=2,\ldots,6$.}
  \end{figure}
  Moreover, Equation~(\href{http://dlmf.nist.gov/10.22.E74}{10.22.74})
  of~\cite{dlmf} asserts
  \begin{equation*}
    \int_0^{\infty} x^{1-\nu} J_\nu^3(x) \intd x
    = \frac{2^{\nu-1} (3/16)^{\nu - 1/2}}{\pi^{1/2} \,\Gamma(\nu+1/2)}
    > 0,
  \end{equation*}
  so that the \eqref{eq:intJ>0} holds true for $x$ large enough in the case
  $\alpha = 1 - \nu$ and $\beta = 3$.
\end{Rem}

With Lemma \ref{lemma:integ>0} at hand, we can prove the following.

\begin{Thm}\label{transcritical-radial-bifurc}
  Assume $\Omega=B_{R}$ and $N \ge 3$.
  For every $i \ge 2$,  $(p_{i},u_{0}) := (2+\lambdarad_i,1)$ is a
  bifurcation point  in
  $\Tilde{\C}^{2,\alpha}(B_R)$ of Problem~\eqref{pblP1}.
  Denote $B_i$ the branch bifurcating from $(2+\lambdarad_i,1)$.
  The following holds:
  \begin{enumerate}[(i)]
  \item close to $(2+\lambdarad_i,1)$, the branch is a $\C^1$-curve;
  \item there exists $\varepsilon>0$ (which does not depend on $i$)
    such that if $(p,u_{p}) \in B_i$ then $u_{p}$ is positive and
    $p>2+\varepsilon$;
  \item\label{transcritical} the bifurcation point
    $(2+\lambdarad_i,1)$ is transcritical.
    Furthermore, if $B_i^-$ denotes the part of the
    branch starting at $(2+\lambdarad_i,1)$
    which bifurcates to the right of $2 + \lambdarad_i$,
    we have $(p,u_{p}) \in B_i^- \limplies u_{p}(0) < 1$,
    while on the part $B_i^+$ of the
    branch bifurcating to the left of $2 + \lambdarad_i$,
    the branch is made of solutions satisfying $u_{p}(0) > 1$.
  \end{enumerate}
\end{Thm}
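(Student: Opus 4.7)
The plan is to apply the Crandall--Rabinowitz theorem (Proposition~\ref{bifurc-simple}) to the map $F$ defined in \eqref{mapF} at $(p^{*}, u^{*}) := (2 + \lambdarad_i, 1)$. Hypotheses~(i)--(ii) are immediate: $F(p,1) \equiv 0$ and $u \mapsto \abs{u}^{p-2}u$ is smooth away from $0$, hence near $u = 1$. Hypotheses~(iii)--(iv) follow from the spectral analysis of Section~\ref{bifu}: $\lambdarad_i$ is simple in $\Tilde{\C}^{2,\alpha}_{\text{rad}}$, so $\ker \partial_u F(p^{*}, 1) = \langle \phi_i \rangle$, and the Fredholm alternative identifies the codimension-one range with $\{f : \int_{B_R} f \phi_i = 0\}$. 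The coefficients $a = -1$ and $b = -\tfrac{1}{2}(1+\lambdarad_i) \lambdarad_i \int_{B_R} \phi_i^3$ have already been computed in \eqref{a}--\eqref{b}. Since $a \ne 0$ and $\partial_u^2 F$ is continuous near $u = 1$, part~(1) of Proposition~\ref{bifurc-simple} produces a $\C^1$ local solution curve, proving~(i).

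The heart of the matter is showing $b \ne 0$, which reduces to $\int_{B_R} \phi_i^3 \ne 0$. Normalizing so that $\phi_i(0) > 0$, formula~\eqref{eigg} yields $\phi_i(x) = c_i \abs{x}^{-(N-2)/2} J_\nu(\sqrt{\lambdarad_i}\abs{x})$ with $\nu = (N-2)/2$ and $c_i > 0$. Passing to radial coordinates and changing variables via $s = \sqrt{\lambdarad_i}\, r$, the volume factor $r^{N-1}$ combined with $r^{-3(N-2)/2}$ yields exponent $r^{1-\nu}$, so $\int_{B_R} \phi_i^3$ is a positive multiple of
\begin{equation*}
  \int_0^{\sqrt{\lambdarad_i}\, R} s^{1-\nu} J_\nu^3(s) \intd s.
\end{equation*}
This is exactly the setting of Lemma~\ref{lemma:integ>0} with $\alpha = 1-\nu$ and $\beta = 3$, and its hypotheses are satisfied precisely when $\nu \ge 1/2$, i.e.\ $N \ge 3$ (the strict inequality $\alpha < \beta/2$ for $\nu = 1/2$ is $1/2 < 3/2$). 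Therefore $\int_{B_R} \phi_i^3 > 0$ and $b < 0$. Proposition~\ref{bifurc-simple}(2) then gives the transcritical expansion $u_p = 1 + b^{-1}(p - p^{*}) \phi_i + o(p - p^{*})$. Evaluating at the origin, since $\phi_i(0) > 0$ and $b < 0$, we obtain $u_p(0) < 1$ when $p > p^{*}$ (the branch $B_i^-$) and $u_p(0) > 1$ when $p < p^{*}$ (the branch $B_i^+$), giving~(iii).

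For~(ii), near the bifurcation point $u_p = 1 + O(p - p^{*})$ in $\Tilde{\C}^{2,\alpha} \hookrightarrow \C^0$, so $u_p$ is positive there. To propagate positivity along the whole connected component $B_i$, suppose $(p_0, u_0) \in B_i$ is a first point where positivity fails; then $u_0 \ge 0$ and $u_0$ vanishes somewhere. Near such a zero, $u_0 < 1$ by continuity, and rewriting the equation as $-\Delta u_0 + (1 - u_0^{p_0-2}) u_0 = 0$ with nonnegative zeroth-order coefficient allows the strong maximum principle (complemented by Hopf's lemma combined with the Neumann condition if the zero lies on $\partial B_R$) to conclude $u_0 \equiv 0$, contradicting nontriviality of $B_i$. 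Once positivity is established on the whole branch, Theorem~\ref{1=unique sol} applied at $\lambda = 1$ yields $p > \tilde p(B_R)$, so setting $\varepsilon := \tilde p(B_R) - 2 > 0$, which depends only on $B_R$ and not on $i$, completes~(ii).

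The main obstacle is precisely the nondegeneracy $\int_{B_R} \phi_i^3 \ne 0$: without a sharp sign estimate on this oscillating Bessel integral one cannot exclude cancellation, and the hypothesis $N \ge 3$ in the theorem traces back exactly to the sharpness limitations of Lemma~\ref{lemma:integ>0}, with the $N = 2$ case supported only by the numerics of Remark~\ref{transcritical-N=2}.
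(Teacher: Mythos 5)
Your overall strategy is the same as the paper's: apply Crandall--Rabinowitz with the coefficients from \eqref{a}--\eqref{b}, reduce $b\ne 0$ to the Bessel integral and invoke Lemma~\ref{lemma:integ>0} with $\alpha = 1-\nu$, $\beta=3$ for $N\ge 3$. Your computation showing $b<0$ is correct.

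For part~(ii) you take a different route from the paper. Where the paper splits into the cases of a zero at some $r_0\in\intervaloc{0,R}$ (handled by ODE Cauchy uniqueness) and the delicate case $u(r)=0$ only at $r=0$ (handled via the non-increasing Hamiltonian $h$ plus Gronwall, because the radial ODE is singular there), you instead invoke the strong maximum principle and Hopf's lemma for the PDE. This is a legitimate and actually cleaner alternative: the origin is an interior point of $B_R$ for the PDE, so the singularity of the radial ODE at $r=0$ never enters; and at a boundary zero Hopf plus the Neumann condition forces $u\equiv 0$. You should still note why $u\equiv 0$ cannot occur on $B_i$ — namely that $(p,0)$ is never a bifurcation point (indeed $\partial_u F(p,0)=-\Delta+\id$ is invertible), so the closure of $B_i$ cannot touch the trivial zero branch; ``nontriviality of $B_i$'' is on the right track but worth making explicit, as the paper does.

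Part~(iii) has a genuine gap. The Crandall--Rabinowitz expansion $u_p = 1 + b^{-1}(p-p^*)\phi_i + o(p-p^*)$ with $b<0$ and $\phi_i(0)>0$ only tells you that $u_p(0)<1$ (resp.\ $>1$) \emph{near} the bifurcation point. The theorem asserts this along the \emph{entire} branches $B_i^\pm$, which in principle could wander, turn around in $p$, and a priori cross the level $u(0)=1$. The paper closes this with a global observation you omit: since the Hamiltonian $h$ of \eqref{eq:Hamiltonian} is non-increasing in $r$ and $1$ is an equilibrium of the radial ODE, any solution on $B_i$ with $u(0)=1$ and $u'(0)=0$ must be the constant $1$, which is excluded from $B_i$. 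Hence $u(0)\ne 1$ everywhere on $B_i$, and by continuity and connectedness the sign of $u(0)-1$ determined near the bifurcation point persists along each component $B_i^\pm$. Without this step your argument does not justify the global conclusion of~(\ref{transcritical}).
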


Observe that for $i = 2$, the functions on the branch emanating to the
right (resp.\ left) of $2 + \lambdarad_2$ are increasing (resp.\
decreasing), at least when $p$ is close enough to $2 +
\lambdarad_2$. We will prove later on that this holds actually along
the whole branch. Some parts of the proof follow from nowadays standard arguments. We give them for completeness. 

\begin{proof}
  (i) Since $a\ne 0$, Crandall-Rabinowitz theorem~\ref{bifurc-simple}
  implies that $(2+\lambdarad_i,1)$ is a continuous bifurcation point
  for $F$.  In addition, the set of non-trivial solutions of $F=0$ is
  composed locally of a unique curve of class~$\C^1$. Hence (i) holds.

  (ii) Let $\mathcal{B}_{i}\subset \IR \times \Tilde{\C}^{2,\alpha}_{\text{rad}}$
  be the continuum that branches out from $(2+\lambdarad_i,1)$,
  $B_i := \mathcal{B}_{i} \setminus\{(2+\lambdarad_i,1)\}$
  and $(p,u)\in B_{i}$. Close to the bifurcation point
  $(2+\lambdarad_i,1)$, $u$ is close to $1$ in the $\C^2$ topology,
  so that $u$ is clearly positive.
  Let $\tilde p > 2$ be given by Theorem~\ref{1=unique sol}.
  We can assume that $\tilde p$ is smaller than
  $2 + \lambdarad_2 > 2$.
  We claim that
  \begin{equation*}
    \forall (p,u)\in B_{i},\quad
    p> \tilde p  \text{ and } u>0.
  \end{equation*}
  By connectedness, if the claim does not hold, there exists, on the continuum
  $B_{i}$, a nonnegative solution $u$
  to~\eqref{pblP1} such that $p = \tilde p$
  or $u$ vanishes at at least one point.
  In the first case $p = \tilde p$,
  Theorem~\ref{1=unique sol} implies that $u\equiv 1$ or
  $u\equiv 0$ but this is impossible since neither $(\tilde p,1)$
  nor $(p,0)$, $p \in \intervaloo{2, +\infty}$,
  are bifurcation points of $F$.  We can
  therefore suppose that $p> \tilde p$, $u\ge0$ and $u \not\equiv 0$.
  If the set $\{r\in[0,R]\mid u(r)=0\}$ contains a point $r_{0} \in
  \intervaloc{0,R}$, then $u'(r_{0}) = 0$ because $r_{0}$ is either an
  interior minimum or the boundary condition holds. The local uniqueness
  of the solution for the Cauchy problem \eqref{eq:radial} with initial
  data $u(r_{0})=0$, $u'(r_{0})=0$ now implies $u\equiv 0$ which is a
  contradiction.
  It remains to deal with the case where $\{r\in[0,R]\mid
  u(r)=0\}=\{0\}$. Choose $s > 0$ small enough so that $u(r) < 1$
  for all $r \in \intervalcc{0,s}$. We claim that $u' > 0$ on
  $\intervaloc{0,s}$.  Because $u > 0$ on $\intervaloc{0,s}$,
  there are points $r$ arbitrarily close to $0$ such that $u'(r) >
  0$. It is thus sufficient to show that $u'(r) \ne 0$ for  $r \in
  \intervaloc{0,s}$.  Notice that, in view
  of equation~\eqref{eq:radial},
  \begin{equation}
    \label{eq:crit pt u(r)}
   \text{for all }r \in \intervaloc{0,s},\
    u(r) \in \intervaloo{0,1} \text{ and } u'(r) = 0
    \text{ implies }  u''(r) > 0.
  \end{equation}
  Suppose that, on the contrary, $u'$ vanishes at $t\in \intervaloc{0,s}$.
  Using~\eqref{eq:crit pt u(r)} with $r = t$, one deduces that the
  maximum of $u$ over $\intervalcc{0,t}$ occurs at some interior point $r_1
  \in \intervaloo{0,t}$ such that $u'(r_1) = 0$ and $u''(r_1) \le
  0$.  This contradicts~\eqref{eq:crit pt u(r)} and thus we indeed have
  $u' > 0$ on~$\intervaloc{0,s}$.
  Now, remember that, because $u$ describes the profile
  of a radial function, $u'(0) = 0$.
  Therefore, for all $r \in \intervalcc{0,R}$, $h(r) \le h(0) = 0$ where
  $h$ is defined by~\eqref{eq:Hamiltonian}.  This implies
  \begin{equation*}
    (u') ^2 \le u^2 - \tfrac{2}{p} u^p
    \le u^2
  \end{equation*}
  and thus $u' \le u$ on $\intervaloo{0,s}$.  Using Gronwall's
  inequality, one concludes $u(r) \ge u(s)  \e^{r-s}$ for all $r
  \in \intervalcc{0,s}$ which is a contradiction when $r=0$.

  \smallbreak

  (iii) The bifurcation is transcritical if $b \ne 0$, where $b$
  is defined by \eqref{b}.
  Taking the explicit form of the eigenfunctions, and integrating in
  spherical coordinates, one finds that $b \ne 0$ if
  and only if
  \begin{equation*}
    \int_0^R \Bigl( r^{-\frac{N-2}{2}}
    J_{\nu} \bigl(r \sqrt{\Bar\lambda_i} / R\bigr)
    \Bigr)^3  r^{N-1}\intd r  \ne 0,
 \end{equation*}   
or equivalently
 \begin{equation}  
    \int_0^{\sqrt{\Bar\lambda_i}} t^{1-\nu} J_\nu^3(t) \intd t
    \ne 0,
    \label{eq:reduced-b}
  \end{equation}
  where $\Bar\lambda_i =
  \lambdarad_i(B_1)$
  is the corresponding spherical eigenvalue of $-\Delta$ on the unit
  ball (so that $\lambdarad_i = \Bar\lambda_i / R^2$) and $\nu = N/2 - 1 \ge 0$.
  According to Lemma~\ref{lemma:integ>0} with $\alpha = 1 - \nu$
  and $\beta = 3$, this integral is positive for all $i$.
  Recalling that
  \begin{equation*}
    b = -\frac{1}{2}
    (1 + \lambdarad_i) \lambdarad_i
    \, \abs{\IS^{N-1}}\int_0^{\sqrt{\Bar\lambda_i}} t^{1-\nu} J_\nu^3(t) \intd t,
  \end{equation*}
  one gets $b < 0$ for all $i$.

\medbreak

  For the last statement in (iii), first notice that,
  up to a positive normalization factor, the
  function $\phi_i$ is $x \mapsto \abs{x}^{-\nu}
  J_\nu(\sqrt{\lambdarad_i} \abs{x})$.  In view of
  equation~(\href{http://dlmf.nist.gov/10.7.E3}{10.7.3})
  of~\cite{dlmf}, $\phi_i(0) > 0$. 
  Using the fact that $b < 0$ and the asymptotic expansion of
  the branches~\eqref{eq:transcritical}, one concludes that,
  in a neighborhood of $(2 + \lambdarad_i, 1)$,  the
  functions on the branch emanating to the right (resp.\ left) of $2 +
  \lambdarad_i$ satisfy $u(0) < 1$ (resp.\ $u(0) > 1$).
  This property remains true along the whole branch.
  Indeed, since the function $h$ defined
  by~\eqref{eq:h} is non-increasing, one sees that $u(0) \ne 1$
  otherwise $u$ is the constant solution~$u\equiv 1$ which does not
  belong to the branch~$B_{i}$.
\end{proof}

\begin{Rem}
  In contrast with Theorem~\ref{transcritical-radial-bifurc}, the
  bifurcation points for the one-dimensional case are always
  supercritical.  On $B_R = \intervaloo{-R, R}$, all eigenvalues are
  simple.  Let $\phi_i$ be a $i$\,th eigenfunction, sorted so that the
  corresponding eigenvalues $\lambda_i$ are increasing, and normalized
  so that $\norm{\phi_i}_{L^2} = 1$.  As before, we take $\langle\psi,
  v\rangle := \int_{B_R} v \phi_i$.  Elementary but tedious
  computations then show that
  $$a = -\int_{-R}^R \phi_i^2 = -1 \ne 0,\quad
  b = -\frac{1}{2}(1+\lambda_i) \lambda_i \int_{-R}^R \phi_i^3 = 0$$
  and
  \begin{equation*}
    c
    = \tfrac{-1}{6a} (1+\lambda_i) \lambda_i
    \Bigl( - (\lambda_i - 1) \int_{-R}^R \phi_i^4
    - 3 (1+\lambda_i) \lambda_i \int_{-R}^R \phi_i^2 w \Bigr)
    = \frac{\pi^2 i^2}{12\, R^3} + \frac{5 \pi^4 i^4}{192\,R^5}
    + \frac{\pi^6i^6}{768\,R^7}
  \end{equation*}
  where $w$ is any solution of $-w'' - \lambda_iw = \phi_i^2$ with
  Neumann boundary conditions.
\end{Rem}

When $N=2$, we conjecture Theorem~\ref{transcritical-radial-bifurc}
remains but we have to leave that as an open question for which a
positive answer is strongly supported by the numerical
computations.  We can state the following weaker result.

\begin{Thm}
  Assume $\Omega=B_{R}$ and $N =2$.  For every $i \ge 2$,
  $(p_{i},u_{0}) := (2+\lambdarad_i,1)$ is a bifurcation point in
  $\Tilde{\C}^{2,\alpha}(B_R)$ of Problem~\eqref{pblP1}.
  Denote $B_i$ the branch bifurcating from $(2+\lambdarad_i,1)$.
  The following holds:
  \begin{enumerate}[(i)]
  \item close to $(2+\lambdarad_i,1)$, the branch is a $\C^1$-curve;
  \item there exists $\varepsilon>0$ (which does not depend on $i$)
    such that if $(p,u_{p}) \in B_i$ then $u_{p}$ is positive and
    $p>2+\varepsilon$;
  \item in addition to the point $(2+\lambdarad_i,1)$,
    the branch is composed of two connected components, one along
    which $u(0) < 1$ and another one along which $u(0) > 1$;
  \item\label{transcritical-large} if $i$ is large enough, then the
    bifurcation from $(2+\lambdarad_i,1)$ is transcritical and the
    characterization of the branch stated in
    assertion~(\ref{transcritical}) of
    Theorem~\ref{transcritical-radial-bifurc} holds.
  \end{enumerate}
\end{Thm}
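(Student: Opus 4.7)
The plan is to mirror the proof of Theorem~\ref{transcritical-radial-bifurc}, step by step, isolating the single place where the assumption $N \ge 3$ was used --- namely the appeal to Lemma~\ref{lemma:integ>0} to fix the sign of~$b$ in~\eqref{b}. Assertions~(i)--(iii) should go through for every $i \ge 2$, and it is only~(iv) where I will have to restrict to $i$ large.

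For~(i) and~(ii), I would reuse the previous proof verbatim. The identity $a = -\int_{B_R}\phi_i^2 = -1 \ne 0$ from~\eqref{a} is dimension-free, so Proposition~\ref{bifurc-simple}(1) delivers the local $\C^1$-curve required in~(i). The bound $p > 2 + \varepsilon$ with $\varepsilon$ independent of~$i$ comes from Theorem~\ref{1=unique sol}, whose threshold $\tilde p$ depends only on $\Omega = B_R$, and positivity of $u_p$ along $B_i$ follows from the same Cauchy--Lipschitz/Gronwall argument built on the Hamiltonian~\eqref{eq:Hamiltonian}; none of these steps uses the dimension.

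For~(iii), I would first establish that $u(0) \ne 1$ on $B_i$. If $u(0) = 1$, then $h(0) = 1/p - 1/2$; since $h$ is non-increasing by~\eqref{eq:h} and $u'(R) = 0$,
\[
h(R) = \frac{u(R)^p}{p} - \frac{u(R)^2}{2} \ge \min_{s > 0} \Bigl(\frac{s^p}{p} - \frac{s^2}{2}\Bigr) = \frac{1}{p} - \frac{1}{2} = h(0),
\]
forcing $h$ to be constant and hence, via~\eqref{eq:h}, $u' \equiv 0$ and $u \equiv 1$, contradicting $u \in B_i$. Continuity of $(p,u) \mapsto u(0)$ then confines each connected component of $B_i$ to $\{u(0) < 1\}$ or to $\{u(0) > 1\}$. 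The Crandall--Rabinowitz expansion $\bar u(s) = 1 + s \phi_i + o(s)$ of Proposition~\ref{bifurc-simple}(1), together with $\phi_i(0) \propto J_0(0) = 1 > 0$, places the two local arcs obtained by removing $(2+\lambdarad_i, 1)$ from the local curve on opposite sides of~$1$; since that curve exhausts the local solution set, $B_i$ must consist of exactly two global connected components.

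The genuine obstacle is~(iv). The reduction that produced~\eqref{eq:reduced-b} still applies and gives
\[
b = -\tfrac{1}{2}(1+\lambdarad_i)\lambdarad_i\, |\IS^1| \int_0^{\sqrt{\Bar\lambda_i}} t\, J_0^3(t) \intd t,
\]
but Lemma~\ref{lemma:integ>0} does not cover $\nu = 0$, so the sign of $b$ cannot be decided for small $i$. My plan is instead to invoke the explicit value
\[
\int_0^{+\infty} t\, J_0^3(t) \intd t = \frac{(3/16)^{-1/2}}{2\pi^{1/2}\,\Gamma(1/2)} > 0
\]
recalled in Remark~\ref{transcritical-N=2}. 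Since $\sqrt{\Bar\lambda_i} \to +\infty$ as $i \to +\infty$, the truncated integral will be strictly positive for all $i$ large enough, yielding $b < 0$. Proposition~\ref{bifurc-simple}(2) then produces the transcritical expansion~\eqref{eq:transcritical}, and grafting it onto~(iii) transports the sign characterization of $B_i^\pm$ from the closing paragraph of the proof of Theorem~\ref{transcritical-radial-bifurc}.
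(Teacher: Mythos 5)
Your proposal is correct and follows essentially the same route as the paper's own proof: assertions~(i)--(ii) carry over verbatim since $a = -1 \ne 0$ is dimension-free, assertion~(iii) is handled by combining the Crandall--Rabinowitz local expansion $\bar u(s) = 1 + s\phi_i + o(s)$ with $\phi_i(0) > 0$ and the Hamiltonian monotonicity argument showing $u(0) \ne 1$ along $B_i$, and assertion~(iv) uses the closed-form value of $\int_0^\infty t\, J_0^3(t)\,\mathrm dt$ from Remark~\ref{transcritical-N=2} to force $b<0$ for $i$ large. The only cosmetic difference is that you spell out the $u(0)\ne 1$ argument more explicitly via the minimization of $s\mapsto s^p/p - s^2/2$ and re-order the steps, but the substance and the key lemmas invoked are identical to the paper's.
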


\begin{proof}
  The first two assertions follow as in the proof of Theorem~\ref{transcritical-radial-bifurc}.
  Assertion~(\ref{transcritical-large}) is a consequence of
  Remark~\ref{transcritical-N=2}.  Finally,
  Theorem~\ref{bifurc-simple} asserts that the curve
  $s \mapsto (\bar p(s), \bar u(s))$ locally giving the bifurcation
  branch around $(2+\lambdarad_i,1)$ is such that
  \begin{equation*}
    \bar u(s) = 1 + s \phi_i + \order(s)
  \end{equation*}
  where $\phi_i$ can be chosen so that $\phi_i(0) > 0$.
  Thus, $u(0) > 1$ when $s > 0$ and $u(0) < 1$ when $s < 0$.
  The same argument as for Theorem~\ref{transcritical-radial-bifurc}
  implies that these properties are preserved along the corresponding
  continuums.
\end{proof}

\subsection{Properties of the solutions along the branches}

In this subsection, we first show that the branches $B_{i}$ are
unbounded and that they do not cross. We introduce the following
definition which is intended to distinguish the solutions bifurcating
at $(2+\lambdarad_{i},1)$.
\begin{Def}
  A positive radial solution $u$ is of type $i$ if and only if the number of
  zeros of $r \mapsto u(r) - 1$ is the
  same as the number of zeros of ${\phi}_{i}$,
  the radial eigenfunction associated to $\lambdarad_i$. If
  $u$ is of type $i$ and $u(0)>1$ then we say that $u$ is of type $i_{+}$
  while if $u(0)<1$, we say that $u$ is of type $i_{-}$.
\end{Def}

The next proposition states the classical separation of the branches via nodal properties. 

\begin{Prop}
  \label{unbd}
  The branches $B_{i} \subset \IR \times
  \bigl(\C_{\text{\upshape rad}}^{2,\alpha} \setminus\{1\} \bigr)$
  starting from $(2 + \lambdarad_i, 1)$ ($i > 1$)
  are unbounded for the $\C_{\text{\upshape rad}}^{2,\alpha}$-topology and do not
  intersect. Moreover, along the
  branch $B_{i}$, the solutions of \eqref{pblP1} are
  of type $i$.
\end{Prop}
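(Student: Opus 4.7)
The plan is to combine the Rabinowitz global bifurcation alternative with a nodal preservation argument along each branch. First I would invoke the Rabinowitz principle already recalled in the paper: each $B_i$ is either unbounded in $\IR \times \C^{2,\alpha}_{\text{rad}}$ or its closure meets another bifurcation point $(2+\lambdarad_j, 1)$ with $j\ne i$. The strategy is then to rule out the second alternative and simultaneously exclude crossings by proving that every $(p,u) \in B_i$ is of type~$i$.

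The main step is to show that the set $A_i := \{(p,u) \in B_i : u \text{ is of type } i\}$ is non-empty, open, and closed in $B_i$. Non-emptiness follows from the Crandall--Rabinowitz expansion $u = 1 + s\phi_i + \order(s)$ and from the fact, recalled in the text, that the radial eigenfunction $\phi_i$ has only simple zeros in $(0,R)$: the $\C^1$-perturbation then preserves their number for small $s\ne 0$. The key structural fact underlying openness and closedness is a \emph{simplicity property} for zeros of $u-1$ on the branch: if $(p,u)\in B_i$ and $r_0\in [0,R]$ satisfies $u(r_0)=1$ and $u'(r_0)=0$, then the Cauchy problem for the radial ODE \eqref{eq:radial} with data $(1,0)$ has the unique solution $u\equiv 1$ (at $r_0=0$ the condition $u'(0)=0$ is automatic by radial regularity; at $r_0=R$ it coincides with the Neumann condition). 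Since $u\not\equiv 1$ on $B_i$, every zero of $u-1$ is therefore transversal and lies in the open interval $(0,R)$.

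With this in hand, openness of $A_i$ is an implicit function theorem argument applied at each of the finitely many simple zeros of $u_0-1$, combined with the uniform positivity of $\abs{u_0-1}$ away from them. For closedness, if $(p_n,u_n)\to(p,u)\in B_i$ in $\IR\times\C^{2,\alpha}_{\text{rad}}$ with each $u_n$ of type $i$, the $k$ ordered zeros of $u_n-1$ subconverge in $[0,R]$; a collision of two of them would, by Rolle's theorem, yield a limit $r^*\in(0,R)$ with $u(r^*)=1$ and $u'(r^*)=0$, while a limit landing at $0$ or $R$ would combine with the forced $u'(0)=u'(R)=0$ to produce the same forbidden configuration. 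Conversely, any zero of $u-1$ is transversal and is thus obtained as a limit of zeros of $u_n-1$. Hence $u-1$ has exactly the same number of zeros as $u_n-1$, proving $(p,u)\in A_i$.

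The conclusion is then immediate. If $(2+\lambdarad_j,1)\in\closure{B_i}$ with $j\ne i$, the Crandall--Rabinowitz theorem provides a unique local $\C^1$-curve of non-trivial solutions through that point, which by the same argument is necessarily of type $j$, contradicting the type-$i$ preservation on $B_i$; hence $B_i$ must be unbounded. An intersection $(p,u)\in B_i\cap B_j$ with $i\ne j$ would likewise force $u$ to be simultaneously of type $i$ and of type $j$, which is impossible. I expect the closedness step---specifically preventing zero collisions of $u_n-1$ in the limit---to be the main technical point, with ODE Cauchy uniqueness and Rolle's theorem as the crucial tools.
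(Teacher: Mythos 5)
Your approach mirrors the paper's: Rabinowitz's alternative, zero-counting along the continuum, and the ODE observation that coexistence with the constant solution $1$ makes every zero of $u-1$ simple, which freezes the type along $B_i$ and identifies it near the bifurcation point. The one step that needs repair is your invocation of Cauchy uniqueness at $r_0=0$. The radial ODE~\eqref{eq:radial} has the singular coefficient $(N-1)/r$ at the origin, so the classical Cauchy--Lipschitz theorem does not apply at $r_0=0$; uniqueness for that singular initial value problem is true, but it is \emph{not} ``automatic by radial regularity'' and requires a separate argument. The paper avoids the issue by an energy argument based on the Hamiltonian~\eqref{eq:Hamiltonian}: if $u(0)=1$ then $h(0)=1/p-1/2$, which is the global minimum of $s\mapsto s^p/p - s^2/2$ on $\intervalco{0,+\infty}$, while $h(R)=|u(R)|^p/p - u(R)^2/2\ge 1/p-1/2$; since $h$ is non-increasing by~\eqref{eq:h}, $h$ is constant, hence $u'\equiv 0$ and $u\equiv 1$, contradicting $(p,u)\in B_i$. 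Replacing your $r_0=0$ argument by this Hamiltonian monotonicity (and retaining Cauchy uniqueness for $r_0\in\intervaloc{0,R}$, where the coefficient is regular) completes your proof. The remaining difference is inessential: you identify the type near $(2+\lambdarad_i,1)$ directly from the Crandall--Rabinowitz expansion $u=1+s\phi_i+\order(s)$ in $\C^{2,\alpha}$, whereas the paper passes through the normalized sequence $v_n:=(u_n-1)/\norm{u_n-1}_{\C^{2,\alpha}}$ with a compactness and bootstrap argument; both are correct and yours is arguably more direct given that the Crandall--Rabinowitz curve is $\C^1$ into $\Tilde{\C}^{2,\alpha}_{\text{rad}}$.
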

\begin{proof}
  We know that the branches are unbounded in
  $\IR \times \C_{\text{rad}}^{2,\alpha}$
  or linked by pair. To disprove the second possibility, it is enough to
  prove that along $B_{i}$, the solutions are of type $i$.
  Let $(p,u) \in B_i$.
  We know that $p>2+\varepsilon$ and as observed in the proof of
  statement~(iii) of Theorem \ref{transcritical-radial-bifurc}, $u(0)\ne 1$.
  Moreover, as $u$ is a solution of the ODE
  \begin{equation}
    \label{eq:radial-ode}
    -\partial_r^2 u - \frac{N-1}{r}\partial_r u + u = \abs{u}^{p-2} u,
  \end{equation}
  which also possesses the constant solution $1$, the roots of $u-1$
  are simple. Therefore, the number of roots of $u-1$ along the
  $\C^{2,\alpha}$-continuum $B_i$ cannot change.  To prove that the
  number of roots of $u-1$ is the same as $\phi_i$, we consider a
  sequence $\bigl((p_n, u_n)\bigr)_{n} \subset B_i$ converging to
  $(2+\lambdarad_i, 1)$ in~$\IR \times \C^{2,\alpha}$ and we set
  $v_n := (u_n - 1)/\norm{u_n-1}_{\C^{2,\alpha}}$.  Due to the fact
  that the embedding $\C^{2,\alpha} \hookrightarrow \C^{0,\alpha}$ is
  compact, one can assume that $v_n \to v^*$ in~$\C^{0,\alpha}$.
  Recalling that $u_n > 0$, the equation for $v_n$ can be written as
  \begin{equation*}
    v_n = ( -\Delta + \id)^{-1}
    \frac{u_n^{p_n-1} - 1}{\norm{u_n-1}_{\C^{2,\alpha}}}
    = ( -\Delta + \id)^{-1} \bigl( (1 + \lambdarad_i + o(1)) v_n
    + O(\norm{u_n-1}_{\C^{0,\alpha}}) \bigr).
  \end{equation*}

  Since the inverse of the $-\Delta + \id$ is continuous, one deduces that the
  convergence  $v_n \to v^*$ actually occurs in~$\C^{2,\alpha}$, thus
  $\norm{v^*}_{\C^{2,\alpha}} =1$ and $v^*$ is a radial
  eigenfunction of $-\Delta$ with eigenvalue $\lambdarad_i$.
  By simplicity, $v^*$ is a multiple of $\phi_i$ and has the same number
  of zeros.  Since these zeros are simple, $v_n$ also has the same
  number of zeros as $v^*$ for $n$ large. This completes the proof.
\end{proof}

Summing up the previous results, we can distinguish the behavior of
the solutions on the two connected components of the branch~$B_i$.
\begin{Thm}\label{boundary2}
  Let $N \ge 3$.
  The set $B_i$ consists of two branches $B_{i}^{\pm}$ such that
  \begin{enumerate}[(i)]
  \item on $B_{i}^{-}$, $p > 2 + \lambdarad_i$ close to the
    bifurcation point, the solutions $u$ are of type $i_-$, $u(0)$ is
    a global minimum, $u$ has exactly $i$ critical points which are
    all non degenerate local extrema, each maxima (resp.\ minima)
    being strictly
    greater (resp.\ smaller) than~$1$ and strictly smaller
    (resp.\ larger) than the previous one;
  \item on $B_{i}^{+}$, $p < 2 + \lambdarad_i$ close to the
    bifurcation point, the solutions $u$ are of type $i_+$, $u(0)$ is
    a global maximum, $u$ has exactly $i$ critical points which are
    all non degenerate local extrema, each maxima (resp.\ minima) being strictly
    greater (resp.\ smaller) than~$1$ and strictly smaller
    (resp.\ larger) than the previous one.
  \end{enumerate}
\end{Thm}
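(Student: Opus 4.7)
The plan is to leverage the radial ODE~\eqref{eq:radial-ode}, the monotonicity of the Hamiltonian $h$ defined in~\eqref{eq:h}, and the type-$i$ property from Proposition~\ref{unbd}. I would treat only $B_{i}^{-}$; the argument for $B_{i}^{+}$ is entirely symmetric after swapping the roles of maxima and minima. By Theorem~\ref{transcritical-radial-bifurc}, every $(p,u)\in B_{i}^{-}$ satisfies $u(0)<1$ with $p>2+\lambdarad_{i}$ near the bifurcation point, and by Proposition~\ref{unbd} the function $u-1$ has precisely $i-1$ simple zeros $0<s_{1}<\cdots<s_{i-1}<R$ (the number of zeros of $\phi_{i}$ in $(0,R)$).

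The key algebraic observation is that at any critical point $r_{0}\in[0,R]$ of~$u$, equation~\eqref{eq:radial-ode} (combined with the regularity at the origin, which gives $u''(0)=u(0)\bigl(1-u(0)^{p-2}\bigr)/N$) yields
\[
  u''(r_{0})\;=\;c_{r_{0}}\,u(r_{0})\bigl(1-u(r_{0})^{p-2}\bigr),
\]
with $c_{r_{0}}>0$. Since the zeros of $u-1$ are simple, no critical point of $u$ satisfies $u(r_{0})=1$, so $u''(r_{0})\ne0$ and every critical point of $u$ is a \emph{non-degenerate} local extremum, a maximum when $u(r_{0})>1$ and a minimum when $u(r_{0})<1$. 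Consequently, on any open interval on which $u-1$ keeps a constant sign, all critical points of $u$ are of the same type; a second critical point of that type in such an interval would, by Rolle's theorem, force a critical point of the opposite type strictly between the two, contradicting the uniform sign of $u-1$. Applying Rolle between consecutive zeros $s_{j}$ and $s_{j+1}$ therefore produces exactly one critical point in each of the $i-2$ intervals $(s_{j},s_{j+1})$; none can lie in the interior of $(0,s_{1})$ or of $(s_{i-1},R)$, for otherwise it and the corresponding endpoint ($0$ or $R$) would be two critical points of the same type in a single-sign region. Adding $r=0$ and $r=R$ gives exactly $i$ critical points, and the types alternate because the sign of $u-1$ alternates between consecutive intervals.

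The amplitude statement and the fact that $u(0)$ is the global minimum both rest on~\eqref{eq:h}. Setting $\ell(t):=t^{p}/p-t^{2}/2$, we have $h(r_{0})=\ell\bigl(u(r_{0})\bigr)$ at every critical point, and since $u$ is not constant, $h$ is \emph{strictly} decreasing on any subinterval on which $u'$ is not identically zero, hence between any two consecutive critical points. As $\ell$ is strictly decreasing on $(0,1)$ and strictly increasing on $(1,+\infty)$, the inequality $\ell\bigl(u(r_{k})\bigr)<\ell\bigl(u(r_{j})\bigr)$ for two consecutive maxima $r_{j}<r_{k}$ gives $u(r_{k})<u(r_{j})$, and for two consecutive minima gives $u(r_{k})>u(r_{j})$. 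In particular, since $r=0$ is a minimum with $u(0)<1$, every subsequent critical value is strictly greater than $u(0)$ (maxima exceed $1$, later minima lie closer to~$1$), and as $u$ attains its extrema at critical points, $u(0)$ is the global minimum. The most delicate point in the argument is the bookkeeping of the counting step, notably the treatment of the outermost intervals $(0,s_{1})$ and $(s_{i-1},R)$ where Rolle's theorem does not directly apply, so one must exploit the sign constraint on $u''$ to exclude interior critical points there.
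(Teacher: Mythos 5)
Your proposal is correct and fleshes out exactly the ``standard ODE arguments based on the energy dissipation'' that the paper invokes in its one-line proof: you combine the type-$i$ property from Proposition~\ref{unbd} with the sign of $u''$ at critical points coming from the radial equation~\eqref{eq:radial-ode} to count and classify the critical points, and then use the monotonicity of the Hamiltonian $h$ from~\eqref{eq:h} together with the shape of $t\mapsto t^p/p - t^2/2$ to order the critical values and identify $u(0)$ as the global extremum. One small imprecision: your appeal to ``Rolle's theorem'' to exclude two critical points of the same type in a sign-constant region is better phrased as an extreme-value argument (between two non-degenerate local minima $u$ must attain an interior maximum, whose type is forced by~\eqref{eq:radial-ode} to contradict the sign of $u-1$); the conclusion is nonetheless the intended one, and the counting, including the treatment of the outermost intervals using the Neumann endpoints $r=0$ and $r=R$, is handled correctly.
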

\begin{proof}
 The proof is a simple consequence of the previous statements and of
  standard ODE arguments based on the energy dissipation \eqref{eq:h}. 
\end{proof}

\begin{Rem}
  The same result holds for $N=2$ except that we do not know how the
  branches $B^\pm_i$ behave for $p$ close to~$2 + \lambdarad_i$.
\end{Rem}

Note that on the first bifurcation, the solutions are increasing on
one part of the branch and decreasing on the other part. On the other
branches, the solutions are oscillating around~$1$ with a ``decreasing
envelope''.

\subsection{Degeneracy and multiplicity}
\label{main}

We now collect some of the consequences of the bifurcation
analysis. In particular, the a priori estimates given in
Section~\ref{aprioriesti} lead to further qualitative results. We
recall that a positive solution $u_{0}$ of \eqref{pblP1} is said to be
degenerate if there exists $v\ne 0$ such that
\begin{equation*}
\left\{
\begin{aligned}
-\Delta v+v&= (p-1)\abs{ u_{0}}^{p-2}v,&&\text{ in } \Omega, \\
\partial_{\nu}v&=0,&&\text{ on } \partial \Omega,
\end{aligned}
\right.
\end{equation*}
or, equivalently, if the kernel of $\partial_uF(p,u_{0})$ is
larger than $\{0\}$.

\begin{Prop}
  \label{deg}
  Let $N \ge 3$, $\Omega=B_{R}$, and $i\ge 2$.
  If $2+ \lambdarad_i(B_{R}) < 2^*$, Problem
  \eqref{pblP1} admits a degenerate positive
  radial solution of type $i_+$ for some $p = p_{i} \in \intervaloo{2,
    2 + \lambdarad_i}$.
\end{Prop}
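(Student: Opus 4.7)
The plan is to exhibit the degenerate solution as a \emph{turning point} on the branch $B_i^+$ emanating from $(2+\lambdarad_i, 1)$. Near the bifurcation point, Theorem \ref{transcritical-radial-bifurc} provides a $\C^1$-curve of type $i_+$ solutions with $p < 2+\lambdarad_i$, and also guarantees $p > 2 + \varepsilon$ everywhere on $B_i^+$. I set
\begin{equation*}
  p_i := \inf\{p : (p, u) \in B_i^+\} \in \intervalco{2+\varepsilon,\, 2+\lambdarad_i}
\end{equation*}
and prove that this infimum is attained at a degenerate solution of type $i_+$.

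To extract a limit, I pick $(p_n, u_n) \in B_i^+$ with $p_n \to p_i$. Since $p_n \in \intervalcc{p_i, 2+\lambdarad_i} \subset \intervaloo{2, 2^*}$ (using the assumption $2+\lambdarad_i < 2^*$), Proposition \ref{apriori-bound} combined with the bootstrap in its proof yields uniform $\C^{2,\alpha}_{\text{rad}}$ bounds on $(u_n)$; a compact embedding then produces a subsequential limit $u_i$, so that $(p_i, u_i)$ solves \eqref{pblP1} with $u_i(0) \ge 1$. If $u_i(0) = 1$, then together with $u_i'(0) = 0$ (automatic for smooth radial functions), the uniqueness of the Cauchy problem for the radial ODE~\eqref{eq:radial-ode} forces $u_i \equiv 1$. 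Then $(p_i, 1)$ would be a bifurcation point, i.e.\ $p_i = 2+\lambdarad_j$ for some $j$, and the relation $p_i < 2+\lambdarad_i$ would impose $j \ne i$. But the nodal classification used in the proof of Proposition \ref{unbd} (the number of zeros of $u - 1$ is constant along $B_i$ and coincides with that of $\phi_i$) rules out such a ``linking by pair'' and forces $j = i$, a contradiction. Hence $u_i(0) > 1$ and $u_i$ is of type $i_+$; since $B_i^+$ is a connected component of the set of non-trivial radial solutions (hence closed in the relative topology) and $(p_i, u_i)$ lies in its closure, one deduces $(p_i, u_i) \in B_i^+$.

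Finally, assume for contradiction that $u_i$ is non-degenerate. Then $\partial_u F(p_i, u_i)$ is a Banach-space isomorphism, and the Implicit Function Theorem applied to $F$ at $(p_i, u_i)$ yields a $\C^1$-curve of solutions $p \mapsto u(p)$ defined on a neighborhood of~$p_i$ with $u(p_i) = u_i$. Continuity gives $u(p)(0) > 1$ nearby, so the curve consists of non-trivial solutions and is connected to $(p_i, u_i) \in B_i^+$; by the connected-component property it lies entirely in $B_i^+$. But this produces points of $B_i^+$ with $p < p_i$, contradicting the definition of~$p_i$. Hence $u_i$ is degenerate, yielding the claimed solution at some $p_i \in \intervaloo{2, 2+\lambdarad_i}$. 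The main obstacle is the step ruling out $u_i \equiv 1$: it requires the sharp type/nodal preservation along the branch from Proposition \ref{unbd}, precisely the ingredient that excludes the linking-by-pair alternative.
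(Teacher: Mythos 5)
Your proposal is correct and follows essentially the same route as the paper's proof: identify the degenerate solution as a turning point by combining the a~priori bound for $p\le 2+\lambdarad_i<2^*$ (Proposition~\ref{apriori-bound}), the unboundedness of the branch and the lower bound $p>2+\varepsilon$ from Theorem~\ref{transcritical-radial-bifurc}, and then apply the Implicit Function Theorem to contradict non-degeneracy at the minimal~$p$. You supply more detail than the paper at the step where the minimum is attained — you extract a subsequential limit, exclude $u_i\equiv 1$ via Cauchy uniqueness together with the nodal classification of Proposition~\ref{unbd}, and check that the limit indeed lies in $B_i^+$ — whereas the paper asserts ``$p$ achieves a minimum along the continuum'' without spelling these steps out; this is a fair and careful filling of an implicit gap rather than a different argument.
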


\begin{proof}
  We know that statement~(\ref{transcritical}) of
  Theorem~\ref{transcritical-radial-bifurc} holds. On
  $B_{i}^{+}$, the branch starting from
  the left of~$(2+\lambdarad_i,1)$, the
  solutions are a priori bounded as long as $p<2^{*}$
  (see Proposition~\ref{apriori-bound}).  On the other
  hand, we know that the branch is unbounded (for the topology of
  $\IR \times \C^{2,\alpha}$) and that
  $p \ge 2+\varepsilon$ for some $\varepsilon>0$
  that depends only on $R$. It follows that $p$ achieves a minimum along
  the continuum at some value $p_{i} \ge 2+\varepsilon$. If the
  corresponding solution $u_{p_{i}}$ was non degenerate, the Implicit
  Function Theorem would allow to extend the branch to the left of
  $p_{i}$ which is a contradiction.
\end{proof}

Observe that $\lambdarad_i(B_{R}) < 2^*-2$ as soon as $R$ is large so
that on large balls, there exist many degenerate positive
solutions. These turning points on the bifurcation diagram should
imply a change of Morse index (for instance in the space of radial
functions) along the branches and a change in the minimax property of
the solution.  This is supported by the numerical computations of
section~\ref{num:1st radial bifurc}. 
It also implies a local multiplicity result for
solutions of type $i_{+}$.

\begin{Cor}\label{multiloc}
  Let $N \ge 3$.
  If $2+ \lambdarad_i(B_{R})< 2^*$, there exists $\varepsilon_{i}>0$ such that if $2+
  \lambdarad_i(B_{R})-\varepsilon_{i}<p<2+
  \lambdarad_i(B_{R})$, Problem \eqref{pblP1} with $\Omega=B_{R}$ has
  at least two positive radial solutions of type $i_+$.
\end{Cor}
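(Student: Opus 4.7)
The plan is to exploit the turning point $(p_i, u_{p_i}) \in B_i^+$ provided by Proposition~\ref{deg}, where $p_i = \min_{B_i^+} p \in \intervaloo{2, 2+\lambdarad_i}$, together with the unboundedness of the branch $B_i^+$. I set $\varepsilon_i := 2+\lambdarad_i - p_i > 0$, reducing it if necessary so that $\intervaloo{2+\lambdarad_i - \varepsilon_i, 2+\lambdarad_i}$ lies inside the domain of validity of the local Crandall--Rabinowitz parametrization near $(2+\lambdarad_i, 1)$.

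For each $p^* \in \intervaloo{2+\lambdarad_i - \varepsilon_i, 2+\lambdarad_i}$, the first solution $u_{p^*}^{(1)}$ of type~$i_+$ is provided by the local $\C^1$-branch of Theorem~\ref{transcritical-radial-bifurc}(i), parametrized by $p$ in a left-neighborhood of $2+\lambdarad_i$, with the corresponding $u$ close to the constant $1$.

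The second solution must come from the part of $B_i^+$ past the turning point. The key observation is that $B_i^+$ is unbounded in $\IR \times \C^{2,\alpha}$ by Proposition~\ref{unbd}, while Proposition~\ref{apriori-bound}, applied with $\bar p = 2+\lambdarad_i < 2^*$, combined with the lower bound $p > 2+\varepsilon$ from Theorem~\ref{transcritical-radial-bifurc}(ii), uniformly bounds $\|u\|_{\C^{2,\alpha}}$ on the subset $\{(p,u) \in B_i^+ : p \le 2+\lambdarad_i\}$. Hence $B_i^+$ must contain some $(p', u')$ with $p' > 2+\lambdarad_i$.

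The main obstacle, to be addressed in the final step, is to rigorously extract from the continuum $B_i^+$ a second point at level $p = p^*$ distinct from $u_{p^*}^{(1)}$. Intuitively, $B_i^+$ crosses the level $\{p = p^*\}$ at least twice: once on the ``descending'' local CR arc from $(2+\lambdarad_i, 1)$ down to $(p_i, u_{p_i})$, and a second time on the ``ascending'' portion from $(p_i, u_{p_i})$ up to $(p', u')$. I would realize this by removing a small neighborhood $V$ of $(2+\lambdarad_i, 1)$ from $B_i^+$ (which strips off the CR arc), checking that what remains is a connected set containing both $(p_i, u_{p_i})$ and $(p', u')$, and then applying the intermediate value theorem to the continuous $p$-projection on this complementary piece to produce a solution at level $p^*$ lying outside $V$, hence distinct from $u_{p^*}^{(1)}$.
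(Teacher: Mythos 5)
Your approach---turning point from Proposition~\ref{deg}, a priori bound on the sub-level set $\{p \le 2+\lambdarad_i\}$, and an intermediate-value/connectedness argument to produce a second crossing---is the natural one and matches what the paper has in mind (the corollary is stated without an explicit proof, as an immediate consequence of the turning point in Proposition~\ref{deg} combined with the branch structure). The step you flag is the only delicate one: removing an open neighborhood $V$ of $(2+\lambdarad_i,1)$ from $B_i^+$ need not in general leave a connected set. Here the local structure saves you: for $V$ small enough, $B_i^+ \cap V$ is precisely the half-open Crandall--Rabinowitz arc $\Gamma(\intervaloo{s_V,0})$, where $\Gamma(0)=(2+\lambdarad_i,1)\notin B_i^+$ and $\Gamma(s_V)\in\partial V\cap B_i^+$ is its unique exit point. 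If $B_i^+\setminus V$ decomposed as $A\sqcup B$ (relatively closed, nonempty, disjoint) with, say, $\Gamma(s_V)\in A$, then $\bigl(A\cup\Gamma(\intervaloo{s_V,0})\bigr)\sqcup B$ would disconnect $B_i^+$, a contradiction; so $B_i^+\setminus V$ is connected and your IVT step goes through. Two small bookkeeping points: (1) fix $V$ first (small enough to isolate the CR arc and to exclude both $(p_i,u_{p_i})$ and $(p',u')$), and only then shrink $\varepsilon_i$ so that $u_{p^*}^{(1)}\in V$ for every $p^*\in\intervaloo{2+\lambdarad_i-\varepsilon_i,\,2+\lambdarad_i}$; and (2) Proposition~\ref{apriori-bound} directly gives $H^1\cap L^\infty$ bounds, so the passage to a uniform $\C^{2,\alpha}$ bound needs the standard elliptic bootstrap, which the paper also invokes implicitly in the proof of Proposition~\ref{deg}.
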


The numerical computations of Sections~\ref{num:1st radial bifurc}
and~\ref{num:open-questions} indicate that
$\lambdarad_i(B_{R})-\varepsilon_{i}$ is actually the turning point
$p_{i}$ from Proposition~\ref{deg} but a proof of this fact actually requires a deeper
analysis that we do not pursue here.  When $2+ \lambdarad_2(B_{R})< 2^*$, this means there exist
two decreasing solutions for $p_{2} < p < 2+ \lambdarad_2(B_{R})$.

In Section~\ref{num:open-questions},
we also numerically observe that the solution on the unbounded part of the
branch $B^+_i$, i.e.\ after the turning point, explodes as $p\to 2^{*}$.
The solutions seem to concentrate at the origin when $p\to
2^{*}$. It is therefore natural to conjecture that all these branches bifurcate from
infinity at $p=2^{*}$.

\smallbreak

We next derive a global multiplicity result which answers positively a conjecture in \cite{BonheureNorisWeth} at least in the case of a pure power nonlinearity (we believe that the general case can be derived with similar arguments). Indeed,   
assuming 
\begin{enumerate}[(i)]
\item $f\in \C^1\bigl([0,+\infty),\IR\bigr)$, $f(0)=0$ and
  $\displaystyle f'(0) = \lim_{s\to0^+} \frac{f(s)}{s}=0$;
\item $f$ is nondecreasing;
\item $\displaystyle{  \liminf \limits_{s\to+\infty} \frac{f(s)}{s}>1}$ and there exists $u_0>0$ such that $f(u_0)=u_0$ and $f'(u_0)>
    1+\lambdarad_{2}$;
\end{enumerate}
it is proved in \cite{BonheureNorisWeth} that the problem
\begin{equation}\label{eq:main_equation_a=1}
  \begin{cases}
    -\Delta u+u = f(u) &\text{ in } B, \\
    u>0 &\text{ in } B,\\
    \partial_\nu u=0 &\text{ on } \partial B,
  \end{cases}
\end{equation}
has at least one nonconstant increasing radial solution while the authors conjectured that there 
exists a radial solution with $k$ 
intersections with $u_0$ provided that $f'(u_0) >1+ \lambdarad_{k+1}$.
For the pure power nonlinearity $f(u)=\abs{ u}^{p-2}u$, the condition
is $p-1> 1+\lambdarad_{k+1}$.

\begin{Prop}
  \label{multi}
  Assume $N \ge 2$, $\Omega=B_{R}$ and $n\ge 1$.
  If $\bar p \in \bigintervaloo{2 +
  \lambdarad_{n+1}(B_{R}),\, +\infty}$ then, for all $i=2,\dotsc,n+1$,
  Problem \eqref{pblP1} with $p = \bar p$ has at least one
  non-constant positive solution of type $i_{-}$.
\end{Prop}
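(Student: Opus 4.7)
Fix $i \in \{2, \ldots, n+1\}$ and consider the branch $B_i^-$ of solutions of type $i_-$ (so $u(0) < 1$) emanating from the bifurcation point $(2+\lambdarad_i, 1)$ as in Theorem~\ref{boundary2}. Since $\bar p > 2+\lambdarad_{n+1} \ge 2+\lambdarad_i$ for every $i \le n+1$, it suffices to show that the projection of $B_i^-$ onto the $p$-axis contains the interval $\intervaloo{2+\lambdarad_i, +\infty}$: each such $i$ will then yield at $p = \bar p$ one non-constant positive solution of type $i_-$, and distinct values of $i$ give solutions of distinct nodal types, hence distinct solutions.

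The plan has two main ingredients. First, I would show that $B_i^-$ is unbounded in $\IR \times \C^{2,\alpha}_{\text{rad}}$ by invoking Rabinowitz's global bifurcation alternative: either $B_i^-$ is unbounded, or its closure contains another bifurcation point $(2+\lambdarad_j, 1)$ with $j \ne i$. The non-crossing argument of Proposition~\ref{unbd} rules out the second possibility, because solutions on $B_i^-$ keep their nodal count equal to that of $\phi_i$, not of $\phi_j$. Second, I would combine the a priori estimates of Theorem~\ref{thm21} and Corollary~\ref{cor21}, both applicable because $u(0) < 1$ along $B_i^-$, to obtain a bound $\norm{u}_{\C^{2,\alpha}} \le C^{p-1}$ that is uniform on any compact $p$-interval. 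This forces the $p$-projection of $B_i^-$ to be unbounded; together with the uniform lower bound $p > 2 + \varepsilon$ from Theorem~\ref{transcritical-radial-bifurc}(ii) (and its $N=2$ analogue) and the connectedness of $B_i^-$, the projection is an interval that is bounded below and unbounded above. Since the bifurcation point lies in its closure, the projection contains points arbitrarily close to $2+\lambdarad_i$, and by the interval property it then contains $\intervaloo{2+\lambdarad_i, +\infty}$.

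The main obstacle lies in the proper treatment of $B_i^-$ as a separate connected continuum, especially in the transcritical regime where the two sides $B_i^{\pm}$ are joined at the bifurcation point. For small $i$ in dimension $N = 2$, where transcriticality is not established, $B_i^-$ is already a distinct connected component of $B_i \setminus \{(2+\lambdarad_i, 1)\}$ and Rabinowitz's alternative applies to it directly. In the transcritical case one must continue each local Crandall-Rabinowitz half-branch independently; this is justified by the nodal characterization of Proposition~\ref{unbd}, which forbids any reconnection between $B_i^-$ and $B_i^+$ away from the bifurcation point.
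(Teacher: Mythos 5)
Your proof is correct and follows essentially the same route as the paper: consider the branches $B_i^-$, use the fact that $u(0)<1$ along them to invoke the a priori bound of Corollary~\ref{cor21}, deduce that the $p$-projection of each $B_i^-$ is unbounded, and distinguish the resulting solutions at $p=\bar p$ by their nodal type. Your write-up spells out the connectedness and Rabinowitz-alternative details that the paper leaves implicit (the paper's proof is a short paragraph that references the a priori bound and the unboundedness of the branches without elaboration), and you correctly identify the one genuine subtlety — that the global alternative must be applied unilaterally to the sub-continuum $B_i^-$ rather than to the full $B_i$, which requires excluding a reconnection of $B_i^-$ with $B_i^+$; the exclusion is cleanest via the observation that $u(0)=1$ forces $u\equiv 1$ (the energy argument in the proof of Theorem~\ref{transcritical-radial-bifurc}(iii)) rather than via the nodal count alone, as both half-branches carry the same nodal type $i$.
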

\begin{proof}
  Consider the $n$ branches $B_{i}^{-}$ bifurcating from
  $(2+\lambdarad_{i}, 1)$ for $i = 2, 3,\dotsc, n+1$.  Along all
  the branches $B_{i}^{-}$, $u(0)<1$. Since we have an a priori bound
  for such solutions (see Corollary~\ref{cor21}),
  the projection of these branches are
  unbounded in the parameter $p$.  Thus
  each of the branches $B_{i}^{-}$,
  $i=2,\dotsc,n+1$, contains a solution of type $i_{-}$
  to Problem \eqref{pblP1} with $p=\bar p$.
  These solutions are
  non-constant and different because they are distinguished by their type.
\end{proof}

Note that the assumption can be interpreted in term of the size of
the ball. Namely, if
\begin{equation*}
  p>2 \quad\text{and}\quad
  R > \sqrt{{\lambdarad_{n+1}(B_1)} \mathbin{\bigm/} ({p-2})},
\end{equation*}
Problem \eqref{pblP1} possesses at least one non-constant positive solutions
of type $i_{-}$ for $i=2,\dotsc,n+1$.  Indeed, the assumption on $R$
equivalently reads $2 + \lambdarad_{n+1}(B_R) < p$.

\medbreak

If $p<2^{*}$ we can derive a stronger
multiplicity result as the branches $B_{i}^{+}$
give solutions of type $i_{+}$.

\begin{Prop}
  \label{multisubcritical}
  Assume $N \ge 2$, $\Omega=B_{R}$, $n\ge1$ and $2 +
  \lambdarad_{n+1}(B_R)<2^{*}$.  If $2 + \lambdarad_{n+1}(B_R)< p
  <2^{*}$, Problem \eqref{pblP1} has at least one
  positive solution of type $i_{+}$ for $i=2,\dotsc,n+1$.
\end{Prop}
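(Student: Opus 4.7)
The plan is to use the $n$ branches $B_i^{+}$ for $i = 2, \dotsc, n+1$ furnished by Theorem~\ref{transcritical-radial-bifurc} (and its $N=2$ counterpart), each of which bifurcates to the left of $(2+\lambdarad_i, 1)$ and carries, near the bifurcation point, radial positive solutions with $u(0) > 1$, i.e., of type $i_{+}$. By Proposition~\ref{unbd}, the branch $B_i^{+}$ remains of type $i$ throughout; combined with the fact that $u(0) \ne 1$ along the whole branch (Theorem~\ref{transcritical-radial-bifurc}) and the continuity of the map $(p,u) \mapsto u(0)$, the property $u(0) > 1$ is preserved, so every solution on $B_i^{+}$ is of type $i_{+}$. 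It therefore suffices to exhibit, for each $i \in \{2,\dotsc,n+1\}$, a point $(p, u_p) \in B_i^{+}$ with the prescribed value of~$p$.

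The key step is to show that the $p$-projection of $B_i^{+}$ contains the interval $\bigintervaloo{2+\lambdarad_i(B_R),\,2^{*}}$. By Theorem~\ref{transcritical-radial-bifurc}(ii), along $B_i^{+}$ one has $p > 2 + \varepsilon$ for some $\varepsilon > 0$ independent of~$i$, and solutions are positive. Fix any $\bar p \in \intervaloo{2, 2^{*}}$. Proposition~\ref{apriori-bound} yields a uniform $L^{\infty} \cap H^{1}$ bound on positive solutions to~\eqref{pblP1} whose exponent $p$ lies in $\intervaloc{2, \bar p}$; a standard elliptic bootstrap (cf.\ \cite[Lemma~2.2]{nt}) upgrades this to a uniform bound in $\C^{2,\alpha}_{\text{rad}}$. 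On the other hand, by Proposition~\ref{unbd}, $B_i^{+}$ is unbounded in $\IR \times \C^{2,\alpha}_{\text{rad}}$. Hence the $p$-projection of $B_i^{+}$ cannot be contained in any compact subinterval of $\intervaloo{2, 2^{*}}$, so its supremum is at least $2^{*}$.

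Since $B_i^{+}$ is a connected continuum, its projection onto the $p$-axis is a subinterval of $\intervaloo{2+\varepsilon,+\infty}$; as it contains points arbitrarily close to $2 + \lambdarad_i$ (by bifurcation) and points with $p \ge 2^{*}$ or arbitrarily close to $2^{*}$ (by the previous step), it necessarily covers the whole interval $\bigintervaloo{2 + \lambdarad_i,\,2^{*}}$. Because $\lambdarad_i \le \lambdarad_{n+1}$, the assumption $2 + \lambdarad_{n+1}(B_R) < p < 2^{*}$ places $p$ in this projection, producing a solution $u_{p}^{(i)} \in B_i^{+}$ of type $i_{+}$. The solutions obtained for distinct values of $i$ are automatically distinct since they have distinct types.

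The main obstacle is controlling the behaviour of the branch globally: showing that $B_i^{+}$ truly traverses every $p$ up to~$2^{*}$ rather than, say, looping back to a different bifurcation point. This is ruled out by the combination of the a priori bound (Proposition~\ref{apriori-bound}), which forces unboundedness of the branch to be reflected in the $p$-variable when $p < 2^{*}$, the lower bound $p > 2+\varepsilon$, and the non-crossing of branches of different types (Proposition~\ref{unbd}), which prevents $B_i^{+}$ from being linked by pair to any $B_j$ with $j \ne i$.
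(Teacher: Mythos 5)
Your proof is correct and follows essentially the same approach as the paper's own (very terse) proof: the branch $B_i^+$ carries type-$i_+$ solutions, and the a priori bound of Proposition~\ref{apriori-bound}, combined with unboundedness of the branch and the lower bound $p>2+\varepsilon$, forces the $p$-projection of $B_i^+$ to cover the interval $\intervaloo{2+\lambdarad_i,\,2^*}$. You have usefully spelled out the details that the paper leaves implicit, in particular the bootstrap upgrading the $L^\infty\cap H^1$ bound to a uniform $\C^{2,\alpha}_{\text{rad}}$ bound needed to contradict unboundedness of the branch.
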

\begin{proof}
  For $i=2,\dotsc,n+1$, the branch $B_{i}^{+}$ gives rise to a family of
  solutions of type ${i}_{+}$ which are a priori bounded as long as
  the branch stays away from $\{2^{*}\}\times
  \Tilde{\C}^{2,\alpha}_{\text{rad}}$.
\end{proof}

Again this result can be interpreted in term of the size of the
ball.

\smallbreak

The proof of Theorem \ref{intro1} can now be achieved by combining
Proposition \ref{multi}, Proposition \ref{multisubcritical} and
Corollary \ref{multiloc}.

\begin{Rem}
  If $\Omega$ is an annulus, Bessel functions of first and second kind
  (see~\cite{gt}) can also be used to give a characterization of the
  eigenfunctions of $-\Delta$. Then, one can do the same bifurcation
  analysis. Numerical computations show that the corresponding values
  of $b$ are not zero, so that we conjecture that the radial bifurcation
  points are all transcritical. Since the critical exponent does not
  play any role here for radial solutions,
  we can even prove a priori bounds for $p>2^{*}$
  and therefore derive the existence of more solutions in this region
  than in the case of the ball.
\end{Rem}

\section{Small diffusion}
\label{sec:small diffusion}

In this section, we consider the singular perturbation
problem~\eqref{pblE}. As mentioned in the Introduction, 
the existence of positive solutions for this
problem as $\varepsilon \to 0$ has already been investigated by many
authors, essentially by using perturbative methods, and different
concentration phenomena have been highlighted,
both with and without symmetry assumptions. Our study here is of a
non perturbative nature and gives some insight on the radial boundary
clustered layer solutions obtained via a Lyapunov-Schmidt
reduction in~\cite{MR2056434,Malchiodi-Ni-Wei05}. In our analysis,
our main goal is not the behavior of the
solutions in the singular limit $\varepsilon\to 0$ though we will
link our result to the existing literature. We rather focus on
the exact values of $\varepsilon$ where new type of radial solutions
appear and survive for smaller values of the diffusion coefficient.

A bifurcation analysis of problem~\eqref{pblE} was performed by Ni and
Takagi \cite{nt} in a general domain (with a slight refinement on
simple rectangles). Since we deal with radial solutions on a ball, we
are able to go much deeper in the analysis of the behavior of the
branches. The radial bifurcation analysis for the problem \eqref{pblE}
with $f(u)=|u|^{p-2}u$ in a ball as been performed by Miyamoto
\cite{Miyamoto}. The complete picture is given 
in~\cite[Theorem~B]{Miyamoto} when $p$ is supercritical.
In that case, all radial
regular solutions of \eqref{pblE} lie on branches that bifurcate
from the constant solution $u=1$. Each branch of solutions
$(\varepsilon,u_\varepsilon)$ can be parametrised by
$\bigl(\varepsilon,u_\varepsilon(0)\bigr)$. The other main concern of
\cite{Miyamoto} is a careful analysis of the upper half-branches of
the bifurcation diagram (i.e. the parts of the branches where
$u_\varepsilon(0)>1$) when $2^* < p < 2^*_{JL}$ where
$2^*_{JL} := 2 + 4/(N-4 - 2\sqrt{N-1})$ if $N \ge 11$
and $2^*_{JL} := +\infty$ if $2 \le N \le 10$, is the critical exponent of
Joseph and Lundgren. We will rather focus on the lower parts of the
branches (i.e.\ the parts of the branches where $u_\varepsilon(0)<1$)
as those exist for a wide class of nonlinearity.

In this Section, we only consider radial solutions, so that by a
solution of Problem~\eqref{pblE}, we necessarily mean a \emph{radial}
solution.

\medbreak

Without loss of generality,
we assume throughout this section that \eqref{F1} is satisfied with
$u_{0}=1$, namely $f(1)=1$ and $f'(1)>1$ which in particular implies
that $u \equiv 1$ is a solution for all $\varepsilon$.  We
investigate locally the bifurcations from
$u_{0}=1$ and then follow some of their associated global branches.
We only focus on the lower part of the branches of solutions, namely those that survive as
$\varepsilon\to 0$ without having to impose a growth condition
on $f$ at infinity. We
can also easily study the upper part of the branches at the cost of some additional
assumptions on the growth of~$f$ at infinity. For instance, we will
comment, at the end of the section, the special case $f(u)=u^{p-1}$
where $p$ is subcritical. On the other hand, when $f$ has a critical
or supercritical growth, the analysis of the upper part of the branch
is much more involved and blow up may occur (and actually does, see
\cite[Theorem B]{Miyamoto}) at some $\varepsilon^{*}>0$.

In the sequel, we assume that $f$ is of class $\C^{1}$.
The assumption $f'(1)>1$ implies that $f$ is
locally super linear. It will be seen that it is a necessary and
sufficient condition for the (local) existence of branches of
solutions bifurcating from the trivial one at positive values of the parameter. Since many of the
arguments needed to treat Problem~\eqref{pblE} are similar to those
used in Section \ref{bifu} and in \cite{Miyamoto},
we will only sketch the arguments in this section.

Consider the map
\begin{equation*}
  G: \intervaloo{0, +\infty} \times \Tilde{\C}^{2,\alpha}_{\text{rad}} \to
  \C^{0,\alpha}_{\text{rad}} : (\epsilon, u) \mapsto
  (-\epsilon\Delta + \id) u - f(u).
\end{equation*}
Clearly, the function $u\in \Tilde{\C}^{2,\alpha}_{\text{rad}}$ is a
classical solution to Problem~\eqref{pblE} with $\Omega=B_{R}$ if and
only if the couple $(\varepsilon,u)$ is a zero of the function $G$ and
$u>0$. The positivity of $u$ will again be checked a posteriori. We set
\begin{equation}
  \label{eq:epsiloni}
  \varepsilon_{i} := \frac{f'(1)-1}{\lambdarad_i}
  \quad\text{for } i > 1.
\end{equation}
Classical bifurcation
theory implies that $(\varepsilon_{i}, 1)$ is a bifurcation point in
$\Tilde{\C}^{2,\alpha}_{\textup{rad}}$.  Again we can improve this
first insight by using Crandall-Rabinowitz's Theorem. For that
purpose, we compute
\begin{equation}\label{duG}
  \partial_uG(\varepsilon_{i}, 1)[v]
  = \frac{f'(1)-1}{\lambdarad_{i}}
  \bigl(-\Delta v - \lambdarad_{i} v\bigr).
\end{equation}
Keeping the notation of Section \ref{Section:radialbif}, we have
 $$\ker\bigl(\partial_uG(\varepsilon_{i}, 1)\bigr)
  = \langle \phi_i\rangle,$$
where we still assume that $\phi_i(0) > 0$
and that $\phi_i$ is normalized in $L^{2}(B_{R})$;
  $$\codim\bigl(\Ran(\partial_uG(\varepsilon_{i}, 1))\bigr) = 1$$
 and
$g\in \Ran\bigl(\partial_uG(\varepsilon_{i}, 1)\bigr)$
if and only if $\int_{B_R} g \phi_i = 0$
so that we still take
\begin{equation*}
  \psi : {\C}^{0,\alpha}_{\text{rad}} \to \IR :
  g\mapsto \langle\psi, g\rangle := \int_{B_R} g \phi_i.
\end{equation*}
 Simple computations also show that
\begin{equation}\label{asingpert}
  a = \int_{B_R} \partial_{\epsilon u}G(\varepsilon_{i},1)[\phi_i] \, \phi_i
  = \lambdarad_{i}\int_{B_R}\phi_i^2
  = \lambdarad_{i}
  \ne 0
\end{equation}
(remember that $i > 1$) and
\begin{align}\label{b-epsilon}
  b = -\frac{1}{2a}
  \int_{B_R} \partial^2_{u}G(\varepsilon_{i},1)[\phi_i,\phi_i]\phi_i
  \intd x
  = \frac{f''(1)}{2\lambdarad_{i}}\int_{B_R}\phi_i^3.
\end{align}
Recall that the property $\int_{B_R}\phi_i^3 > 0$ was established in
the proof of Theorem~\ref{transcritical-radial-bifurc} for $N\ge 3$.
%
Arguing as in Section~\ref{bifu}, we can easily prove the following
statement.

\begin{Prop}\label{thm2E}
  Assume $N \ge 2$ and that \eqref{F1} and \eqref{F2} hold with $u_{0}=1$.
  Let $i \ge 2$.
  Then $(\varepsilon_{i},1) =
  \bigl(\frac{f'(1)-1}{\lambdarad_i}, 1\bigr)$ is a radial bifurcation
  point in $\Tilde\C^{2,\alpha}(B_R)$ of
  Problem~\eqref{pblE}.  Letting
  $C_i \subset \IR \times \bigl(\Tilde{\C}^{2,\alpha}_{\textup{rad}}
  \setminus\{1\} \bigr)$ denote the bifurcating branch,
  the following assertions hold:
  \begin{enumerate}[(i)]
  \item close to $(\epsilon_i, 1)$, $C_i$ is a $\C^0$-curve
    (even $\C^1$ if $f$ is of class $\C^2$);
  \item the set $C_i$ consists in two
    connected components $C_{i}^{\pm}$ such that, the solutions $u$ on
    $C_{i}^{-}$ satisfy $u(0) < 1$ while, along $C_{i}^{+}$, one has
    $u(0) > 1$.
  \end{enumerate}
\end{Prop}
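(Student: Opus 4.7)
The plan is to apply the Crandall-Rabinowitz theorem (Proposition~\ref{bifurc-simple}) to the map $G$ at the point $(\varepsilon_i, 1)$, closely mirroring the proof of Theorem~\ref{transcritical-radial-bifurc}. First I would verify the four hypotheses of Proposition~\ref{bifurc-simple}: hypothesis (i) holds because $f(1)=1$ implies $G(\varepsilon,1)=0$ for every $\varepsilon>0$; hypothesis (ii) follows from $f\in\C^1$ together with standard smoothness properties of the superposition operator; hypotheses (iii) and (iv) are immediate from~\eqref{duG} and the fact that $\lambdarad_i$ is a simple eigenvalue of $-\Delta$ in $\Tilde\C^{2,\alpha}_{\text{rad}}$, the range being characterized by the Fredholm alternative with dual functional $\langle\psi,g\rangle=\int_{B_R} g\phi_i$. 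Since $a=\lambdarad_i\ne 0$ by~\eqref{asingpert}, conclusion~(1) of Proposition~\ref{bifurc-simple} gives the existence of the bifurcation and a continuous local parametrization $s\mapsto(\bar\varepsilon(s),\bar u(s))$ of the nontrivial solutions with $(\bar\varepsilon(0),\bar u(0))=(\varepsilon_i,1)$ and $\partial_s\bar u(0)=\phi_i$. When $f\in\C^2$, the operator $\partial_u^2 G$ is continuous, hence the curve is of class~$\C^1$; this establishes~(i).

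For~(ii), the local expansion $\bar u(s)=1+s\phi_i+\order(s)$ in $\Tilde\C^{2,\alpha}_{\text{rad}}$ (and therefore in $\C^0$), combined with the normalization $\phi_i(0)>0$, shows that close to $(\varepsilon_i,1)$ the branch splits into two arcs, along which $\bar u(s)(0)>1$ (for $s>0$) and $\bar u(s)(0)<1$ (for $s<0$), respectively. To extend this dichotomy globally I would prove that $u(0)\ne 1$ for every $(\varepsilon,u)\in C_i$. Indeed, any radial classical solution satisfies $u'(0)=0$; if moreover $u(0)=1$, then $u$ and the constant function $1$ solve the same singular Cauchy problem for the radial ODE $-\varepsilon(u''+\tfrac{N-1}{r}u')+u=f(u)$ at the origin, and local uniqueness of this problem forces $u\equiv 1$, contradicting $u\in\Tilde\C^{2,\alpha}_{\text{rad}}\setminus\{1\}$.

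Consequently, by connectedness, the sign of $u(0)-1$ is constant on each connected component of $C_i$. Since the local picture near $(\varepsilon_i,1)$ consists of exactly two arcs with opposite signs, and every connected component of the bifurcating continuum must accumulate at the bifurcation point, $C_i$ decomposes into exactly two connected components $C_i^{\pm}$ distinguished by the sign of $u(0)-1$. The main effort is the verification of the Crandall-Rabinowitz hypotheses in this singular-perturbation setting, which is essentially routine given the explicit computation of $a$. The only potentially delicate point is the local uniqueness for the singular radial Cauchy problem at $r=0$, but this is a classical fact for $\C^2$ solutions of second-order semilinear radial equations and does not require any further growth condition on~$f$.
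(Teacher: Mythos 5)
Your proposal is correct and follows essentially the same route the paper intends (the paper gives no detailed proof, saying only ``arguing as in Section~\ref{bifu}''): verify the Crandall--Rabinowitz hypotheses for $G$ at $(\varepsilon_i,1)$, use $a=\lambdarad_i\ne 0$ from \eqref{asingpert} to get the local curve and the expansion $\bar u(s)=1+s\phi_i+\order(s)$ with $\phi_i(0)>0$, and then propagate the sign of $u(0)-1$ globally by continuity once one knows $u(0)\ne 1$ on $C_i$. The one place you diverge is in establishing $u(0)\ne 1$: the paper's pure-power argument (end of the proof of Theorem~\ref{transcritical-radial-bifurc}\,(iii)) invokes the monotone Hamiltonian $h$, whereas you invoke local uniqueness for the singular radial Cauchy problem at $r=0$; your version is in fact the more robust one here, since for a general $f$ satisfying only \eqref{F1}--\eqref{F2} the function $F(u)-u^2/2$ need not have a \emph{unique} critical value at $u=1$ on $\intervalcc{0,1}$, so the Hamiltonian comparison does not transfer verbatim, while Cauchy uniqueness (which the paper itself uses elsewhere in the proof of Theorem~\ref{transcritical-radial-bifurc}\,(ii)) requires only that $f$ be locally Lipschitz near $1$.
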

%

\begin{Rem}
  If $N \ge 3$, $f$ is of class $\C^2$ around~$u_0 = 1$,
  and $f''(1) \ne 0$, the bifurcation points are
  transcritical and on the part of the branch that bifurcates to the
  left of $\varepsilon_{i}$, we have
  $\sign\bigl(u_{\varepsilon}(0) - 1\bigr) = -\sign(f''(1))$, while on
  the part of the branch bifurcating to the right of
  $\varepsilon_{i}$, the branch is made of solutions satisfying
  $\sign\bigr(u_{\varepsilon}(0) - 1\bigr) = \sign(f''(1))$.
  We conjecture that this remains true for $N=2$.
\end{Rem}

Still arguing as in Section \ref{bifu}, we can derive further
properties of the solutions along the branches.
\begin{Prop}
  \label{thm3E}
  Assume $N\ge 2$, that \eqref{F0},
  \eqref{F1} and \eqref{F2} hold with $u_{0}=1$.
  Then all the branches $C_{i}$, $i \ge 2$,
  are unbounded and do not intersect each other.  Moreover,
  along $C_{i}$, the solutions are of type $i$.  More precisely,
  \begin{enumerate}[(i)]
  \item the solutions on the branch $C_{i}^{+}$ are of type $i_+$,
    $u(0) > 1$ is a global maximum, $u$ has exactly $i$ critical
    points which are all non degenerate local extrema, each maxima
    (resp.\ minima) being strictly greater (resp.\ smaller) than~$1$
    and strictly smaller (resp.\ larger) than the previous one;
  \item the solutions on the branch $C_{i}^{-}$ are of type~$i_-$,
    $u(0) < 1$ is a global minimum, $u$ has exactly $i$ critical
    points which are all non degenerate local extrema, each maxima
    (resp.\ minima) being strictly greater (resp.\ smaller) than~$1$
    and strictly smaller (resp.\ larger) than the previous one.
  \end{enumerate}
  Moreover $\varepsilon\to
  0$ along the branch $C_{i}^{-}$ in the sense that
  the projection of $C_i^-$ on the $\epsilon$-axis contains
  $\intervaloo{0, \epsilon_i}$.
\end{Prop}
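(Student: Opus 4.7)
My plan is to follow the pattern developed in Section~\ref{bifu} (Propositions~\ref{unbd} and~\ref{boundary2}), replacing the map $F$ of~\eqref{mapF} by $G$ and the nonlinearity $|u|^{p-2}u$ by $f(u)$. The constant $u\equiv 1$ solves the radial ODE $-\varepsilon(u''+\tfrac{N-1}{r}u')+u = f(u)$, so by Cauchy uniqueness every zero of $u-1$ is simple, and the number of such zeros is locally constant along the $\C^{2,\alpha}$-continuum $C_i$. The Crandall-Rabinowitz expansion $u = 1 + s\phi_i + o(s)$ identifies this number near the bifurcation point with that of $\phi_i$, so every $(\varepsilon,u)\in C_i$ is of type $i$. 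Distinct branches $C_i$ and $C_j$, $i\ne j$, therefore cannot intersect outside the trivial solution, and Rabinowitz's alternative forces each $C_i$ to be unbounded, since a linking $(\varepsilon_i,1)\sim(\varepsilon_j,1)$ with $j\ne i$ would violate the type invariance.

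Next, the critical-point structure on $C_i^\pm$ follows from the monotonicity of the energy-type function
\begin{equation*}
  h_\varepsilon(r) := \varepsilon\,\frac{|u'(r)|^2}{2} + F(u(r)) - \frac{u^2(r)}{2},
\end{equation*}
which, upon multiplying the ODE by $u'$, satisfies $h_\varepsilon'(r) = -\varepsilon\tfrac{N-1}{r}|u'(r)|^2 \le 0$, in strict analogy with~\eqref{eq:h}. At a critical point $r_0\in(0,R]$ of~$u$, the ODE yields $\varepsilon u''(r_0) = u(r_0) - f(u(r_0))$; were $u''(r_0) = 0$ as well, then $u(r_0)$ would be a fixed point of $f$ and Cauchy uniqueness at $r_0$ would force $u\equiv u(r_0)$, contradicting $(\varepsilon,u)\in C_i$. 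Hence all critical points are non-degenerate, and standard phase-plane analysis in $(u,u')$-coordinates based on strict decay of $h_\varepsilon$ yields the exact count of $i$ critical points and the decreasing-envelope alternation of maxima and minima, reasoning exactly as in the proof of Theorem~\ref{boundary2}. The side ($C_i^+$ versus $C_i^-$) on which $u(0)$ lies relative to $1$ is preserved along each connected component, again since $u(0) = 1$ would force $u\equiv 1$ by Cauchy uniqueness.

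Finally, to obtain $\varepsilon\to 0$ along $C_i^-$, I would combine three earlier ingredients. Along $C_i^-$ we have $u(0)\le 1 = u_0$, so the proof of Proposition~\ref{apriori:bound:small:diffus} furnishes the $\varepsilon$-independent bound $\|u\|_{L^\infty}\le L$; Lemma~\ref{Lem:uniqeps} then gives $\varepsilon\le\bar\varepsilon$ everywhere on $C_i^-$; and if $\varepsilon$ were also bounded below by some $\varepsilon_0>0$, elliptic regularity on $\intervalcc{\varepsilon_0,\bar\varepsilon}$ with $\|u\|_\infty\le L$ would produce a $\C^{2,\alpha}$-bound, contradicting the unboundedness of $C_i^-$ established in the first paragraph. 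The projection of $C_i^-$ onto the $\varepsilon$-axis is therefore a connected subset of $\intervaloo{0,+\infty}$ accumulating both at $\varepsilon_i$ (via the bifurcation point) and at~$0$, hence an interval containing $\intervaloo{0,\varepsilon_i}$. The main technical hurdle is the ODE bookkeeping of the second paragraph; the remaining steps are largely applications of results already proved in Sections~\ref{aprioriesti} and~\ref{bifu}.
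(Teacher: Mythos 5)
Your proposal follows essentially the same route as the paper's proof (which is itself quite terse, referring back to Section~\ref{bifu}), and most of the pieces are in place: the Cauchy-uniqueness argument for simplicity of zeros of $u-1$, the local identification of the zero count with that of $\phi_i$ via the Crandall--Rabinowitz expansion, the Rabinowitz alternative ruling out linking between branches, the dissipated energy $h_\varepsilon$, the non-degeneracy of interior critical points, and the interplay between Lemma~\ref{Lem:uniqeps} and Proposition~\ref{apriori:bound:small:diffus} to force $\varepsilon\to 0$ along $C_i^-$. These all mirror the paper.

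There is, however, one concrete omission. You never establish that the solutions along $C_i$ \emph{remain positive}. The statement classifies solutions by type $i_\pm$, a notion that is defined only for positive radial solutions, and both Lemma~\ref{Lem:uniqeps} (stated for nonnegative solutions) and Proposition~\ref{apriori:bound:small:diffus} (whose proof bounds $F(u(0))-u(0)^2/2$ by $\max_{[0,u_0]}$) implicitly require $u\ge 0$. The paper handles this in its opening sentence precisely by invoking~\eqref{F0}: if $u$ vanishes at some $r_0$ with $u'(r_0)=0$, then since $f(0)=f'(0)=0$ the constant $0$ solves the same Cauchy problem and uniqueness forces $u\equiv 0$, a contradiction; and the case $\{u=0\}=\{0\}$ is excluded by the Gronwall-type estimate derived from the monotonicity of $h_\varepsilon$ (exactly as in the proof of Theorem~\ref{transcritical-radial-bifurc}\,(ii)). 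Your Cauchy-uniqueness bookkeeping tracks $u(0)$ relative to~$1$ but says nothing about $u$ staying away from~$0$ on the whole interval, so this step is missing and assumption~\eqref{F0}, which the proposition explicitly lists, is never used. Once that is added, your argument is complete and matches the paper's.

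As a side remark, both your write-up and the paper's pass a little quickly over the fact that the \emph{half-branch} $C_i^-$ (rather than the full continuum $C_i=C_i^+\cup C_i^-$) is unbounded; Rabinowitz's alternative as invoked only yields unboundedness of the whole continuum. This can be repaired by Dancer-type half-branch arguments or by the local unique continuation of \cite[Proposition~3.1]{Miyamoto} used later in Theorem~\ref{Thm:classCi-}, but since the paper makes the same tacit assumption, I would not count it against you.
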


\begin{proof}
  We first observe that due to the assumption~\eqref{F0}, arguing
  as in Section~\ref{bifu}, the solutions remain positive along the
  branches. The fact that the solutions are of type $i$
  and the behavior of the extrema
  is also proved as before.
  The only assertion which deserves maybe more attention is the fact that
  $\varepsilon\to 0$ along the branch $C_{i}^{-}$. Since the solutions
  on $C_{i}^{-}$ are of type $i_-$, we infer from Lemma
  \ref{Lem:uniqeps} that there exists $\overline{\varepsilon}$ such that if
  $\varepsilon\ge \overline{\varepsilon}$, any solution of type $i_-$ is
  constant. As there always exists an interior point where the solution is above $1$ along the continuum,  
  and since the continuum cannot return to the constant solution $1$, we conclude that $\varepsilon$ is a priori bounded along the branch $C_{i}^{-}$.
  Since the branch is unbounded and
  Proposition \ref{apriori:bound:small:diffus} with
  $\varepsilon_0 = \overline{\varepsilon}$
  provides an a priori bound along the branch, we conclude that
  the branch must contain points $(\epsilon, u)$ for any 
  $0<\epsilon<\epsilon_i$.
\end{proof}

This bifurcation analysis directly leads to the following qualitative
and quantitative result for Problem~\eqref{pblE}
which is the natural counterpart to Proposition~\ref{multi}
for~\eqref{pblE}.

\medbreak

\begin{Cor}
  \label{multiplicity:espilon->0}
  Assume $N\ge 2$, that \eqref{F0},
  \eqref{F1} and \eqref{F2} hold with $u_{0}=1$.
For any $n \ge 1$ and any $\varepsilon
< ({f'(1)-1})/{\lambdarad_{n+1}}$, there exists at least one
solution of Problem~\eqref{pblE} of type $i_-$ for every $2\le i\le n+1$.
\end{Cor}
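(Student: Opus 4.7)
The plan is to read off the corollary essentially as a direct consequence of Proposition~\ref{thm3E}, applied to each index $i \in \{2, 3, \dotsc, n+1\}$. The key input is that each branch $C_i^{-}$ is unbounded and its projection on the $\varepsilon$-axis contains the entire interval $\intervaloo{0, \varepsilon_i}$, where $\varepsilon_i = (f'(1)-1)/\lambdarad_i$.

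First, I would observe that since $0 = \lambdarad_1 < \lambdarad_2 < \lambdarad_3 < \cdots$ is strictly increasing, the corresponding bifurcation values
\[
    \varepsilon_2 > \varepsilon_3 > \cdots > \varepsilon_{n+1}
    = \frac{f'(1)-1}{\lambdarad_{n+1}}
\]
are strictly decreasing. Therefore the hypothesis $\varepsilon < (f'(1)-1)/\lambdarad_{n+1}$ automatically gives $\varepsilon \in \intervaloo{0, \varepsilon_i}$ for every $i \in \{2, \dotsc, n+1\}$.

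Next, for each such $i$, Proposition~\ref{thm3E} guarantees the existence of a point $(\varepsilon, u_i)$ on the branch $C_i^{-}$, and the same proposition ensures that $u_i$ is a positive classical radial solution of~\eqref{pblE} of type~$i_-$. In particular $u_i$ is non-constant (the number of zeros of $u_i - 1$ equals the number of zeros of~$\phi_i$, which is positive for $i \ge 2$).

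Finally, the $n$ solutions $u_2, u_3, \dotsc, u_{n+1}$ obtained this way are pairwise distinct because their types differ: by definition of type, $u_i - 1$ and $u_j - 1$ have different numbers of zeros when $i \ne j$. There is essentially no obstacle in this argument — the entire difficulty has already been absorbed into Proposition~\ref{thm3E} (namely, the non-intersection of branches via nodal counting, the a~priori bound from Proposition~\ref{apriori:bound:small:diffus} keeping the branch from escaping along $C_i^-$, and the use of Lemma~\ref{Lem:uniqeps} to rule out the branch curving back toward large $\varepsilon$).
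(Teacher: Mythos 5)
Your proof is correct and matches the paper's intent exactly: the paper states that the corollary ``directly leads'' from Proposition~\ref{thm3E}, which is precisely how you derive it, using the monotonicity of $\varepsilon_i$ and the projection property of $C_i^-$ to extract one solution of each type $i_-$ for $2\le i\le n+1$, and distinguishing them by nodal count.
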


Again this result can be seen as depending on the size of the ball,
namely, for any $\varepsilon >0$ and any $n \ge 1$,
if $R > \sqrt{\epsilon \lambdarad_{n+1}(B_1) / (f'(1) - 1)}$,
there exists at least one solution of type $i_-$ on $B_R$
for any $i = 2,\dotsc, n+1$.

\medskip

We now show that the branches $C_{i}^{-}$ contain all possible
solutions $u$ such that $u(0)<1$.

\begin{Thm}\label{Thm:classCi-}
  Assume $N\ge 2$, \eqref{F0}, \eqref{F1} and \eqref{F2} with
  $u_{0}=1$ and $f \in \C^{1,1}$ satisfies $\forall s \in
  \intervaloo{0, 1},\ f(s) \ne s$.
  If $u$ is a solution to Problem~\eqref{pblE} such that
  $u(0)<1$, then $u$ lies on a branch $C_{i}^{-}$ for some $i\ge 2$.
\end{Thm}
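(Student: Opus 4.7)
The proof proceeds in three stages: classifying the nodal profile of $u$, propagating it along a continuum of solutions, and identifying that continuum with $C_i^-$. Denote by $\varepsilon_\star > 0$ the value of the parameter for which $u$ solves~\eqref{pblE}.

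\emph{Stage 1: nodal classification.} The hypotheses $f(0)=0$, $f'(0)=0$, $f(1)=1$, together with $f(s)\ne s$ on $\intervaloo{0,1}$, force $f(s)<s$ throughout $\intervaloo{0,1}$ by continuity. Since $u(0)<1$, the radial equation at the origin yields $N\varepsilon_\star u''(0) = u(0) - f(u(0)) > 0$, so $u$ is strictly increasing near $0$. The Hamiltonian $h(r) = \frac{\varepsilon_\star}{2}|u'(r)|^2 + F(u(r)) - \frac{1}{2}u(r)^2$ is non-increasing along the ODE. A phase-plane analysis in the spirit of Theorem~\ref{boundary2} and Proposition~\ref{thm3E} yields that $u$ oscillates finitely many times around $1$ with strictly decaying extrema, the Neumann condition $u'(R)=0$ placing $R$ at the last critical point; hence $u$ is of type $i_-$ for a uniquely determined $i\ge 2$.

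\emph{Stage 2: continuation.} Let $\mathcal{C}$ be the connected component of $(\varepsilon_\star, u)$ in $\Sigma := \{(\varepsilon, v) \in \intervaloo{0,+\infty}\times \Tilde{\C}^{2,\alpha}_{\textup{rad}} : G(\varepsilon,v)=0,\ v>0,\ v\not\equiv 1\}$. Every zero $r_0$ of $v - 1$ for $v\in\Sigma$ is simple: if additionally $v'(r_0)=0$, ODE uniqueness for the radial equation (together with $f(1)=1$) forces $v\equiv 1$, contradicting $v\in\Sigma$. In particular $v(0)\ne 1$ along $\mathcal{C}$, and both the number of zeros of $v-1$ and the sign of $v(0)-1$ are locally constant, hence constant, on $\mathcal{C}$. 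Therefore every element of $\mathcal{C}$ is of type $i_-$. Lemma~\ref{Lem:uniqeps} then gives $\varepsilon\le\overline{\varepsilon}$ along $\mathcal{C}$, and the $L^\infty$ part of Proposition~\ref{apriori:bound:small:diffus} gives $\|v\|_\infty\le L$ uniformly in $\varepsilon$.

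\emph{Stage 3: identification with $C_i^-$.} For any $\delta>0$, Proposition~\ref{apriori:bound:small:diffus} combined with elliptic regularity makes $\mathcal{C}\cap\{\varepsilon\ge\delta\}$ relatively compact in $\Tilde{\C}^{2,\alpha}_{\textup{rad}}$. Applying Rabinowitz's global alternative slicewise and sending $\delta\to 0$, the closure of $\mathcal{C}$ in $\intervalco{0,\overline{\varepsilon}}\times \Tilde{\C}^{2,\alpha}_{\textup{rad}}$ must meet the trivial branch $\{v\equiv 1\}$ at some point $(\varepsilon^0,1)$. Proposition~\ref{thm2E} identifies $\varepsilon^0 = (f'(1)-1)/\lambdarad_k$ for some $k\ge 2$; matching the nodal count of the spanning radial eigenfunction $\phi_k$ against the type-$i_-$ structure of $\mathcal{C}$ forces $k=i$. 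Crandall-Rabinowitz local uniqueness of the branch bifurcating at $(\varepsilon_i,1)$ finally identifies $\mathcal{C}$ with $C_i^-$, whence $u\in C_i^-$. The main obstacle is the non-compactness of $\mathcal{C}$ as $\varepsilon\to 0^+$, where solutions on $C_i^-$ develop boundary layers: one must work slicewise and exclude a separate type-$i_-$ continuum escaping through $\varepsilon\to 0^+$ without ever touching $\{v\equiv 1\}$. The $C^{1,1}$ regularity of $f$ is used precisely here to ensure that non-degenerate solutions admit $C^1$ continuation and that the branch can always be traced back to its bifurcation point.
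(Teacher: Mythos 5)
Your Stage 1 and Stage 2 are fine (though Stage 1 is more work than strictly necessary: since zeros of $u-1$ are simple, any solution with $u(0)\ne 1$ automatically has a well-defined type). The genuine gap is in Stage 3, precisely at the place you flag as ``the main obstacle'' and then leave unresolved.

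You assert that ``applying Rabinowitz's global alternative slicewise and sending $\delta\to 0$'' forces the closure of $\mathcal{C}$ to meet the trivial branch $\{v\equiv 1\}$. This is not a valid deduction. Rabinowitz's alternative is a statement about connected components \emph{emanating from} a bifurcation point on the trivial branch: such a component is unbounded or returns to the trivial branch. It says nothing about an arbitrary connected component of nontrivial solutions, and in particular does not preclude $\mathcal{C}$ from being a bounded closed arc that never touches $\{v\equiv1\}$, or from escaping through $\varepsilon\to 0^+$ (where the a priori estimate of Proposition~\ref{apriori:bound:small:diffus} degenerates as $\varepsilon^{-1}$). You acknowledge this escape scenario but provide no argument to rule it out; ``working slicewise'' is not an argument. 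Moreover, while you claim the $\C^{1,1}$ hypothesis is ``used precisely here'' to trace the branch back, nothing in your Stage 2--3 actually invokes it: your argument is purely about the abstract connected component, not about local uniqueness of continuation.

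The paper closes this exact gap with two concrete ingredients you omit. First, it invokes Miyamoto's \cite[Proposition~3.1]{Miyamoto} (this is where $f\in \C^{1,1}$ is genuinely used) to produce a \emph{global $\C^1$ parametrization} of the solution set through $(\varepsilon_\star,u)$ by the shooting value $\gamma = u(0) \in \intervaloo{0,1}$, so $\mathcal{C}$ is literally a curve $\gamma\mapsto(\tilde\varepsilon(\gamma), \tilde u(\gamma,\cdot))$. Second, and crucially, it shows that $\tilde\varepsilon(\gamma)$ stays bounded away from~$0$ as $\gamma\to 1$ by a Sturm comparison argument applied to $v_n(r) = r^{(N-1)/2}(u_n(r)-1)$: if $\varepsilon_n\to 0$ while $u_n(0)\to 1$, the potential $g(u_n(r))/\varepsilon_n$ blows up and forces arbitrarily many zeros of $u_n-1$, contradicting the constancy of the nodal count along the curve. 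Together with the upper bound $\tilde\varepsilon(\gamma)\le\overline\varepsilon$ from Lemma~\ref{Lem:uniqeps}, limit points of $\tilde\varepsilon(\gamma)$ as $\gamma\to 1$ live in a compact subinterval of $\intervaloo{0,\infty}$, where the a priori bounds give compactness, so the curve converges to a bifurcation value $\varepsilon_i$. Without something playing the role of this Sturm comparison, your Stage 3 does not close.
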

\begin{proof}
  Assume $u$ is a solution to~\eqref{pblE} for some $\varepsilon >0$
  and let $\gamma_{0} = u(0) \in \intervaloo{0,1}$.
  From our assumptions, $f(\gamma_0) \ne \gamma_0$ and so $u$ is non-constant.
  Thus \cite[Proposition~3.1]{Miyamoto}
  --- which is valid for any $f$ of class $\C^{1,1}$ ---
  says that $(\epsilon,u)$ can be uniquely continued locally:
  there is a local $\C^{1}$ parametrization
  \begin{equation*}
    \Gamma :
    \intervaloo{\gamma_{0}-\eta,\gamma_{0}+\eta}\to \IR^{+}\times \C^{2} :
    \gamma \mapsto \bigl(\tilde\varepsilon({\gamma}),
    \tilde u({\gamma},r) \bigr),
  \end{equation*}
  where $\Gamma(\gamma_{0}) = (\varepsilon,u)$,
  $r \mapsto \tilde u({\gamma},r)$
  is the unique solution to~\eqref{pblE} with
  $\varepsilon = \tilde\varepsilon({\gamma})$
  and $\tilde u(\gamma, 0) = \gamma$.
  Repeating the same argument, one sees that the map $\Gamma$
  extends to $\Gamma : \intervaloo{0,1} \to \IR^+ \times \C^2$.
  Lemma~\ref{Lem:uniqeps} implies that $\forall \gamma \in
  \intervaloo{0,1},\ \tilde\varepsilon(\gamma) \le \overline{\varepsilon}$.
  Assume (we will prove it below) that $\tilde\varepsilon(\gamma)$ is
  bounded away from $0$ as $\gamma \to 1$.  Thus limit points
  of $\tilde\varepsilon(\gamma)$ as $\gamma \to 1$ exist and they all lie in
  $\intervalcc{\thinspace \underline{\varepsilon},
    \overline{\varepsilon} \thinspace}$
  for some $\underline{\varepsilon} > 0$.
  Thanks to Proposition~\ref{apriori:bound:small:diffus},
  for any such limit point $\varepsilon^* = \lim
  \varepsilon(\gamma_n)$, the solutions $\tilde u(\gamma_n, \cdot)$
  converge, up to a subsequence, to a solution $u^*$
  to Problem~\eqref{pblE} with $\varepsilon = \varepsilon^*$
  and $u^*(0) = 1$.  Moreover, as these functions belong to the
  continuum $\Gamma(\intervaloo{0,1})$ and are solutions of a second
  order ODE, the number of zeros of $\tilde u(\gamma, \cdot) - 1$ does
  not depend on $\gamma$.
  Thus $u^* = 1$ and $\varepsilon^*$ must be the
  bifurcation value $\varepsilon_i$ for the~$i \ge 2$ for which
  the eigenfunction $\phi_i$ has the same number of zeros as
  $\tilde u(\gamma, \cdot) - 1$.
  So all limit points of $\tilde\varepsilon(\gamma)$ are the same
  $\varepsilon_i$ and consequently
  $\tilde\varepsilon(\gamma) \to \varepsilon_i$ as
  $\gamma \to 1$.  By local uniqueness near the bifurcation point,
  the curve parametrized by $\Gamma$ coincides with
  the branch $C_i^-$ emanating from $(\varepsilon_i, 1)$.
  
  To complete the proof, assume by contradiction that there is a sequence of solutions $(\varepsilon_{n},u_{n})$ such that 
  $\varepsilon_{n}\to 0$, $u_{n}(0)\to 1$.
  As above, the number of zeros of $u_{n}-1$ is the same for all~$n$.
  The convergence $u_{n}(0)\to 1$ actually implies the uniform
  convergence of $u_{n}$ to $1$ because of~\eqref{eq:h}.
  Now, as in Ni~\cite{tech-Ni}, let us consider the function
  $v_{n} (r) := r^{(N-1)/2} \bigl(u_{n}(r) - 1
  \bigr)$.  This function solves
\begin{equation*}
  v''
  + \biggl( \frac{g(u_n(r))}{\varepsilon_n}-\frac{(N-1)(N-3)}{4r^2} \biggr) v
  = 0
  \qquad
  \text{where }
  g(u) :=
  \begin{cases}
    \displaystyle
    \frac{f(u) - u}{u - 1}, &\text{if } u \ne 1,\\[2\jot]
    f'(1) - 1& \text{if } u = 1.
  \end{cases}
\end{equation*}
  Recalling that $f'(1) > 1$, one gets that
  for any $r_0>0$ and $M>0$, there exists $n_0\in\IN$
  such that for $n\ge n_0$, the functions $u_n$ satisfy
  \begin{equation*}
    \forall r \ge r_0,\qquad
    \frac{g(u_n(r))}{\varepsilon_n}-\frac{(N-1)(N-3)}{4r^2}>M^2.
  \end{equation*}
  By Sturm comparison theorem, one deduces that the distance between
  two consecutive zeros of $u_{n}-1$ for $r\ge r_0$ is bounded from
  above by $2\pi/M$.
  Since $M$ can be taken arbitrarily large, we infer
  that the number of zeros of $u_n-1$ cannot remain constant for
  large~$n$.
\end{proof}
We stress that the previous theorem does not state the uniqueness of
the solution of type $i_{-}$ as, even if each branch can be
parametrized as a $\C^{1}$ curve
$\bigl(\varepsilon({\gamma}), u({\gamma},r) \bigr)$,
i.e.\ secondary bifurcations
are excluded, turning points may occur. We strongly believe, and this
is supported by numerics, that uniqueness holds but we have to leave
this as a conjecture for now.

\medbreak

When $f(u) = \abs{u}^{p-2}u$ and $p$ is supercritical,
Miyamoto also obtained
the classification of the solutions such that $u(0)>1$, see
\cite[Theorem B]{Miyamoto}, leading to the complete picture of
positive solutions.
When $p$ is subcritical, we can also complete the classification.

\begin{Prop}
  Assume $N\ge 2$, $f(u)=|u|^{p-2}u$ and $2<p<2^{*}$. If $u$ is a
  solution to Problem~\eqref{pblE}, then either $u=1$ or $u$ lies on a
  branch $C_{i}$ for some $i\ge 2$.
\end{Prop}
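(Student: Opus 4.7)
The plan is to split the argument according to the value of $u(0)$. If $u(0)=1$, the radial ODE associated to~\eqref{pblE} admits the constant $u\equiv 1$ as the unique solution of the Cauchy problem with data $u(0)=1$, $u'(0)=0$; hence $u\equiv 1$. If $u(0)<1$, Theorem~\ref{Thm:classCi-} applies directly---note that $f(s)=s^{p-1}$ satisfies $f(s)\ne s$ on $\intervaloo{0,1}$ because $p>2$---so $u$ belongs to some branch $C_i^-$. The remaining and main case is $\gamma_0 := u(0)>1$, which I would treat by adapting the proof of Theorem~\ref{Thm:classCi-} verbatim, with one essential new ingredient.

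Concretely, I would apply Miyamoto's local continuation \cite[Proposition~3.1]{Miyamoto} at $(\varepsilon,u)$ to obtain a $\C^1$ parametrisation $\Gamma : \gamma \mapsto \bigl(\tilde\varepsilon(\gamma),\tilde u(\gamma,\cdot)\bigr)$ of the set of nontrivial solutions through $(\varepsilon,u)$, and extend $\Gamma$ to the maximal interval $\intervaloo{\gamma_-,\gamma_+}\ni\gamma_0$. The goal is to show $\gamma_-\le 1$; once this is established, a compactness argument as $\gamma\to 1^+$ combined with the unique-solution-curve part of Proposition~\ref{bifurc-simple}(1) identifies $\Gamma$ locally with the branch $C_i^+$ emanating from the relevant bifurcation point $(\varepsilon_i,1)$, and hence globally by unique $\C^1$-continuation.

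The main obstacle is a priori control along $\Gamma$ when $u(0)>1$: in that regime, Theorem~\ref{thm21} provides no $L^\infty$-bound, so the argument used in Theorem~\ref{Thm:classCi-} breaks down. My replacement is the subcritical a priori bound of Proposition~\ref{apriori-bound}: via the scaling $v(y):=u(\sqrt{\varepsilon}\,y)$ on $B_{R/\sqrt{\varepsilon}}$, problem~\eqref{pblE} transforms into the $\lambda=1$ problem~\eqref{pblP} on a ball of arbitrary radius, and the blow-up/Liouville argument of Proposition~\ref{apriori-bound} goes through uniformly in that radius, yielding an $L^\infty$-bound on $\tilde u(\gamma,\cdot)$ whenever $\tilde\varepsilon(\gamma)$ stays in a compact subset of $\intervaloo{0,+\infty}$. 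Lemma~\ref{Lem:uniqeps} precludes $\tilde\varepsilon(\gamma)\to +\infty$, so the only way $\Gamma$ could terminate before reaching $\gamma=1$ is $\tilde\varepsilon(\gamma_n)\to 0$ while $\tilde u(\gamma_n,0)\to\gamma_*>1$.

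To rule out this last scenario, I would reuse the Sturm-comparison argument at the end of Theorem~\ref{Thm:classCi-}. The $L^\infty$-bound and the Hamiltonian dissipation $h'\le 0$ trap each $u_n$ in a bounded region, forcing it to cross~$1$ inside $\intervalcc{0,R}$ (since $u_n(0)>1$ while $h(R)\le h(0)$ confines $u_n(R)$ to a set abutting~$1$). On any subinterval where $u_n\approx 1$, the coefficient $g(u_n)/\varepsilon_n$ in the equation for $v_n := r^{(N-1)/2}(u_n-1)$ is of order $(p-2)/\varepsilon_n \to +\infty$, so Sturm comparison produces arbitrarily many zeros of $v_n$. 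This contradicts the $\gamma$-invariance of the zero count of $\tilde u(\gamma,\cdot)-1$ along the $\C^1$-curve $\Gamma$---the zeros being simple because the constant function $1$ is also a solution of the radial ODE, preventing any two zeros from merging under continuous perturbation.
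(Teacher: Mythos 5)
Your plan follows the paper's own: split on the value of $u(0)$, handle $u(0)<1$ via Theorem~\ref{Thm:classCi-}, and for $u(0)>1$ continue the solution with Miyamoto's local uniqueness and a priori bounds until the continuum reaches a bifurcation point $(\varepsilon_i,1)$. The cases $u(0)=1$ and $u(0)<1$ are handled correctly.

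The gap lies in the case $u(0)>1$. You assert that Lemma~\ref{Lem:uniqeps} precludes $\tilde\varepsilon(\gamma)\to+\infty$, but that lemma does not say this. It says: for $\varepsilon\ge\overline\varepsilon$, any \emph{non-constant} nonnegative radial solution of~\eqref{pblE} must satisfy $u(0)>u_0=1$. This is a one-sided statement which constrains only solutions with $u(0)\le 1$ --- precisely the role it plays in the proof of Theorem~\ref{Thm:classCi-}, where $u(0)<1$. In the regime $u(0)>1$ you are now in, the conclusion of Lemma~\ref{Lem:uniqeps} is vacuously compatible with $\tilde\varepsilon$ being arbitrarily large, so it gives no upper bound on $\tilde\varepsilon$ at all. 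The paper instead invokes the uniqueness theorem of Lin, Ni and Takagi: there exists $\varepsilon_0>0$ such that for $\varepsilon>\varepsilon_0$, problem~\eqref{pblE} with $f(u)=\abs{u}^{p-2}u$ admits only the constant solution. That result forces $\tilde\varepsilon(\gamma)\le\varepsilon_0$ along any continuum of non-constant solutions. Note that this Lin--Ni--Takagi uniqueness is specific to the subcritical pure power and does not follow from the structural hypotheses~\eqref{F0}--\eqref{F2} alone; this is exactly why the present proposition is stated for $f(u)=\abs{u}^{p-2}u$ and $2<p<2^*$, whereas Theorem~\ref{Thm:classCi-} covers general~$f$ but only the half of the diagram with $u(0)<1$. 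Once this ingredient is substituted, the rest of your outline (Proposition~\ref{apriori-bound} for compactness when $\varepsilon$ is confined to a compact of the open half-line, the Sturm comparison as in Theorem~\ref{Thm:classCi-}, and identification with $C_i^+$ by local uniqueness at the bifurcation point) coincides with the paper's argument.
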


\begin{proof}
  Let $u$ be a solution to~\eqref{pblE} for some $\varepsilon >0$ and
  let $\gamma_{0} := u(0)$. If $\gamma_{0}<1$, then
  Theorem~\ref{Thm:classCi-} gives the conclusion. Assume therefore that
  $\gamma_{0} >1$. Again, \cite[Proposition 3.1]{Miyamoto} implies
  this solution $(\varepsilon,u)$ can be uniquely continued locally as
  a curve $\gamma \mapsto \bigl(\varepsilon({\gamma}),
  u({\gamma},r) \bigr)$ and then extended
  as long as we have a priori bounds. It is proved in
  \cite{Lin-Ni-Takagi88} that there exists $\varepsilon_{0}>0$ such
  that for $\varepsilon>\varepsilon_{0}$, Problem~\eqref{pblE}
  only admits the constant solution $u=1$. Then, by Proposition
  \ref{apriori-bound}, we have a priori bounds as long as
  $\varepsilon$ is bounded away from zero. As a consequence, arguing as in the proof of Theorem \ref{Thm:classCi-}, one shows the curve
  $\gamma \mapsto \bigl(\varepsilon({\gamma}), u({\gamma},r)\bigr)$
  can be continued at least
  up to one of the bifurcation points
  $(\varepsilon_i, 1) = \bigl(\frac{p-2}{\lambdarad_i}, 1\bigr)$.
  In particular, $u$ lies
  on a curve $C_{i}^{+}$.
\end{proof}

Since, in the case where $f(u)=|u|^{p-2}u$ with $p<2^{*}$, there
exists a unique entire solution $w$ of the equation on the whole
space, it is easily seen that along the branches $C_{i}^{+}$,
solutions $u$ satisfy $u(0) \to w(0)$ as $\varepsilon\to 0$.
This is illustrated by the computer generated
Figures~\ref{fig:bifurcation-eps}, \ref{fig:rad:epsilon->0}
and~\ref{fig:bifurcation-eps-profile}.

In contrast to the subcritical and supercritical case, as pointed
out in the Introduction, the existence of positive solutions for
$p=2^{*}$ depends on the dimension and not only on $\varepsilon$
(or, equivalently, the size of the ball).

\medbreak

We now briefly turn to the description of the behavior of the
solutions along the branches $C_{i}^{-}$ as $\varepsilon\to 0$.  We
claim that for any $n \ge 2$, the family of solutions of type $i_-$
bifurcating from $\frac{f'(1)-1}{\lambdarad_i}$ is such that the local
maxima cluster around the boundary as $\varepsilon\to 0$. Miyamoto
proved the branch $C_{2}^{-}$ is asymptotically made of increasing
boundary concentrating solutions. Numerical evidence of those facts
are shown in
Section~\ref{sec:concentration-singular-perturb}. In the case
$f(u)=|u|^{p-2}u$, we refer to \cite[Corollary~1.3]{MR2056434} and
\cite{Malchiodi-Ni-Wei05} for the construction, via a Lyapunov-Schmidt
procedure, of solutions with one or multiple interior layers and to
\cite[Corollary~7.11]{Miyamoto} for the construction of a family of
increasing solutions concentrating on the boundary. The result in
\cite{Malchiodi-Ni-Wei05} is valid in our setting and not only for a
pure power.
Combining the arguments of \cite{Malchiodi-Ni-Wei05}
and~\cite{Miyamoto}, one should be able to construct, by reduction,
even more solutions, namely solutions with a prescribed number of
interior layers and a boundary layer.

As a consequence of the previous theorem, the solutions
of Malchiodi, Ni and Wei~\cite{Malchiodi-Ni-Wei05},
concentrating on spheres when $\varepsilon\to 0$,
belong to the branches $C_{i}^{-}$ for odd $i$'s.
The following must therefore hold.
If $(\varepsilon, u_\varepsilon)$ is a family of solutions belonging
to $C_i^-$ and $r^\varepsilon_1 > \cdots > r^\varepsilon_{\lfloor i/2 \rfloor}$
are the local maximums of $u_\varepsilon$, then the following estimates of
Malchiodi, Ni and Wei~\cite{Malchiodi-Ni-Wei05} should be valid:
\begin{equation*}
  1 - r^\varepsilon_1 \sim \varepsilon \log\frac{1}{\varepsilon},
  \qquad
  r^\varepsilon_{j-1} - r^\varepsilon_j
  \sim \varepsilon \log\frac{1}{\varepsilon},
  \quad
  j > 1.
\end{equation*}
These asymptotic estimates should also hold for the branches
$C_{i}^{-}$ for even $i$'s. In these cases, the solutions also have a
local maximum on the boundary.

In Section \ref{expIl}, we give numerics for more general nonlinearities $f$. More clustering solutions may exist when $f$ has more fixed points between $0$ and $1$. We consider either degenerate or nondegenerate additional fixed points of $f$.



\section{Symmetry of least energy solutions}
\label{symbre}

When $p<2^*$, a \textit{least energy solution} is a minimizer of the energy $\E_p$ on the
\textit{Nehari  manifold} $\mathcal{N}_p$ defined by
\begin{equation*}
  \mathcal{N}_p:= \bigl\{u\in H^1\setminus\{0\} \bigm| \langle
  \E_p'(u),u\rangle=0 \bigr\}= \Bigl\{u\in H^1\setminus\{0\} \Bigm|
  \int_{\Omega}\abs{\nabla u}^2+u^2 = \int_{\Omega}\abs{u}^p
  \Bigr\}.
\end{equation*}

It is standard to prove that least energy solutions do not change sign. At the critical exponent $p=2^*$, as already mentioned,
X.~J.~Wang~\cite{wang91} also recovered compactness to get the existence of a positive ground state solution. Remember that $u=1$ is the unique positive solution for $p$ close to~$2$ whence it is the least energy solution.

We now investigate when the least energy are not constant and if they are radially symmetric or not when the domain is a ball. The question of the symmetry breaking has been tackled by M.~Esteban in the case where the domain is the exterior of a ball \cite{ZelatiEsteban,Esteban,Esteban2}. In this case, the least energy solution is never a radial function, whatever $p$
is.

Concerning the Neumann problem in a ball, Lopes \cite{Lopes} showed that any non constant radially symmetric critical point of $\E_p$ cannot be a local minimizer on $ \mathcal{N}_p$. This implies that as soon as we can prove that a least energy solution $u$ is not constant, we have a symmetry breaking result.

In this section, we adapt the
results of A.~Aftalion and F.~Pacella~\cite{aftalion} to the Neumann boundary condition. We show that
a radial positive solution with a Morse index less than $N+1$ must be
constant. The method of \cite{aftalion} allows to consider more general assumptions than in \cite{Lopes} whereas in the setting of \cite{Lopes}, this approach provides an
alternative proof of the symmetry breaking.

We first observe that least energy solutions of \eqref{pblP1} are not constant for $p >2+\lambda_2$. This is true on a general domain and was already pointed out in \cite{Lin-Ni}.
Indeed, by definition, the Morse index of a critical point $u$ of the functional $\E_p$
  corresponds to the sum of the dimensions of the eigenspaces
  associated to the negative eigenvalues $\mu$ of the problem
  \begin{equation*}
  \tag{\protect{$\mathcal{P}'_p$}} \label{pblPD2}
  \begin{cases}
    -\Delta h+h -(p-1)\abs{u}^{p-2}h = \mu h,&\text{in }\Omega, \\
    \partial_{\nu} h=0, &\text{on } \partial \Omega.
  \end{cases}
\end{equation*}
With $u=1$, the solutions $h$ of
  Problem~\eqref{pblPD2} are the eigenfunctions of $-\Delta$
  associated to the eigenvalue $p-2+\mu$. Therefore,
for every $i\in\IN\setminus\{0\}$, $\mu_i =
  \lambda_i-(p-2)$ is an eigenvalue and its multiplicity is that of
  $\lambda_i$.
This implies that for any $i\in\IN\setminus\{0\}$,
if $\lambda_i<p-2\leq \lambda_{i+1}$,
  the Morse index of $1$ is equal to $\sum_{k\leq i}\dim E_k$.

As least energy solutions have a Morse index equal to $1$, the constant solution $u=1$ cannot be a least energy solution of \eqref{pblP1} when $p > 2 + \lambda_2$.
We next focus on the question of the symmetry of non constant least energy solutions.
We consider the problem
\begin{equation*}
\tag{\protect{$\mathcal{P}_f$}} \label{pblPf} \left\{
\begin{aligned}
-\Delta u+u&= f(u),&&\text{ in } B_1, \\
u&>0, &&\text{ in } B_1, \\
\partial_{\nu}u&=0,&&\text{ on } \partial B_1
\end{aligned}
\right.
\end{equation*}
where $B_1$ is the unit ball centered at the origin.
We assume that the nonlinearity $f\in \C^1(\IR)$ satisfies $f(0)\ge 0$.
For any $i\in \{1,2,\linebreak[2]\ldots,N\}$, we denote
\begin{align*}
  \Omega_i &= \{x=(x_1,\ldots, x_N) \in B_1 \mid x_i=0\},\\
  \Omega_i^+ &= \{x=(x_1,\ldots, x_N) \in B_1 \mid x_i>0\},\\
  \intertext{and}
  \Omega_i^- &= \{x=(x_1,\ldots, x_N) \in B_1 \mid x_i<0\}.
\end{align*}
Let $Lv:= -\Delta v + V(x) v$ where $V \in \C(\bar \Omega)$
is even with respect to $x_i$.
 Let us denote by $\mu_i$ the first eigenvalue of $L$ in $\Omega_i^+$ with
zero Dirichlet boundary conditions on $\Omega_i$ and zero Neumann boundary
  conditions on $\partial \Omega_i^+ \setminus \Omega_i$ and $\psi_i$
a first eigenfunction associated to $\mu_i$.
Let $\psi_{i}^*$ denote the odd extension with respect to $x_i$ of
$\psi_i$ over $B_1$.

\begin{Lem}
  \label{lemnotradial}
Assume $V \in \C(\bar \Omega)$. Then, $\psi_{i}^*$ is an eigenfunction of $L$ in
$B_1$ with Neumann boundary conditions, but not a first one. Moreover, if $V$ is even with respect to the variables $x_1,\ldots, x_k$, $1\le k\le N$, the corresponding functions $\psi^*_1,\ldots, \psi^*_k$
are $k$ independent eigenfunctions of~$L$ (none of which is a first eigenfunction).
\end{Lem}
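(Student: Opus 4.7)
The plan is to show directly that the odd reflection $\psi_i^*$ is a weak eigenfunction of $L$ on $B_1$ for the Neumann problem, associated to the eigenvalue $\mu_i$. Since $\psi_i \in H^1(\Omega_i^+)$ has zero trace on the flat face $\Omega_i$, the extension $\psi_i^*$ lies in $H^1(B_1)$ with no singular contribution across $\Omega_i$. To verify the weak eigenvalue identity on $B_1$, I would decompose any test function $\varphi \in H^1(B_1)$ as $\varphi = \varphi_e + \varphi_o$ according to parity in $x_i$. As $V$ is even and $\psi_i^*$ is odd in $x_i$, the integrand $\nabla\psi_i^* \cdot \nabla\varphi_e + (V - \mu_i) \psi_i^* \varphi_e$ is odd in $x_i$, so its integral over the $x_i$-symmetric ball $B_1$ vanishes. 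For $\varphi_o$ (which has zero trace on $\Omega_i$), the integral decomposes as twice the same quantity over $\Omega_i^+$ and reduces to the variational identity already satisfied by $\psi_i$: indeed, $\varphi_o|_{\Omega_i^+}\in H^1(\Omega_i^+)$ with vanishing trace on $\Omega_i$ is an admissible test function for the mixed Dirichlet--Neumann eigenvalue problem defining~$\psi_i$. The spherical-cap part of $\partial B_1$ carries no boundary contribution precisely because $\psi_i$ satisfies a Neumann condition there.

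To see that $\psi_i^*$ is not a first Neumann eigenfunction of $L$ on $B_1$, I would invoke the standard fact that the first Neumann eigenvalue of a Schr\"odinger operator is simple with a sign-definite eigenfunction; since $\psi_i^*$ is odd and nontrivial in $x_i$, it changes sign, ruling out this possibility.

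For the independence statement under the additional assumption that $V$ is even in $x_1, \ldots, x_k$, I would first show that each $\psi_j$ is in fact even in every other variable $x_m$ with $m \ne j$ and $m \le k$. Indeed, the reflection $\sigma_m : x_m \mapsto -x_m$ preserves $\Omega_j^+$, the mixed boundary conditions, and the potential $V$; hence $\psi_j \circ \sigma_m$ is again a positive first eigenfunction of the mixed problem with eigenvalue $\mu_j$, and simplicity forces $\psi_j \circ \sigma_m = \psi_j$. Consequently $\psi_j^*$ is odd under $\sigma_j$ but even under $\sigma_m$ for every $m \in \{1, \ldots, k\} \setminus \{j\}$. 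If $\sum_{i=1}^k c_i \psi_i^* \equiv 0$, applying $\sigma_j$ and subtracting from the original identity eliminates the terms with $i \ne j$ and leaves $2 c_j \psi_j^* \equiv 0$, hence $c_j = 0$ for every~$j$. The main obstacle is really the bookkeeping of the first paragraph, namely carefully verifying that the odd extension solves the eigenvalue problem weakly under mixed Dirichlet--Neumann data; once the parity decomposition of test functions is invoked, everything else follows from elementary symmetry and simplicity of the first eigenvalue.
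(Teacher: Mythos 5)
Your proof is correct, and it departs from the paper's argument in two genuinely different ways. For the main claim, the paper verifies pointwise that $L\psi_i^*=\mu_i\psi_i^*$ holds across the interface $\Omega_i$: it observes that $\psi_i^*\in\C^1(B_1)$, that tangential second derivatives vanish on $\Omega_i$ because $\psi_i^*\equiv 0$ there, and that the equation on $\Omega_i^+$ forces $\partial_{x_i}^2\psi_i=0$ on $\Omega_i$. Your proof instead stays at the level of the weak (variational) formulation, splitting test functions by parity in $x_i$; the odd part reduces to the variational identity for the mixed problem on $\Omega_i^+$ and the even part contributes nothing by symmetry. This bypasses any discussion of classical regularity of $\psi_i^*$ at the interface, with Neumann conditions on $\partial B_1$ built in as natural boundary conditions, which is arguably cleaner and more robust. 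For the independence of $\psi_1^*,\dots,\psi_k^*$, the paper uses only oddness: on the $x_i$-axis (where $x_m=0$ for all $m\ne i$), every $\psi_m^*$ with $m\ne i$ vanishes while $\psi_i^*$ does not, which already forces each $c_i=0$ in any linear dependence. Your argument first establishes the extra fact that each $\psi_j$ is even in every other even variable $x_m$ (by applying $\sigma_m$ and invoking simplicity of the first eigenvalue of the mixed problem), and then eliminates coefficients by an $\sigma_j$-reflection and subtraction. Both arguments are valid; the paper's is shorter because it needs only the oddness you already have, whereas yours establishes a stronger symmetry property of the $\psi_j$ along the way.
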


\begin{proof}
  As the potential $V \in L^{N}(\Omega)$, the variational
  formulation for the
  first eigenvalue of $L$ implies that the corresponding eigenspace is
  one-dimensional and all eigenfunctions do not change sign.
  We clearly have $L(\psi_i^*)=\mu_i \psi_i^*$ on
  $B_1\setminus
  \Omega_i$ and $\psi_{i}^{*}$ satisfies the Neumann boundary conditions on
  $\partial B_1$. It remains to verify
  that $L(\psi_i^*)=\mu_i\psi_i^*$ on $\Omega_i$. As $\psi_i^*=0$ on
  $\Omega_i$, we have
  \begin{equation*}
    \forall j\ne i,\quad
    \partial_{x_j}\psi_i^*=\partial^2_{x_j}\psi^*_i=0.
  \end{equation*}
  As $\psi_i \in \C^1(\bar \Omega_i^+)$ and $\psi_i^*$ is odd, $\psi_i^*
  \in \C^1(B_1)$.  Moreover, since $\psi_i$ is an eigenfunction of
  $L$ on $\Omega_i^+$, the equation tells
 that $\partial_{x_i}^2 \psi_i=0$ on $\Omega_i$.  So,
  $L(\psi_{i}^*)=\mu_i \psi_{i}^*$ on the whole of $B_1$. As $\psi_i^*$ is
  sign-changing, it is not a first eigenfuntion.  This concludes
  the first part of the proof.

  The independence of the functions $\psi^*_i$, $i\in\{1,\ldots,k\}$,
  follows from the fact that $\psi_{i}^*$ is the sole function among
  them which is not identically equal to zero on the $x_i$-axis.
\end{proof}

\begin{Thm}
  \label{miN1}
  If $u$ is a non constant positive radial
  solution of~\eqref{pblPf}, its Morse index is at least $N+1$.
\end{Thm}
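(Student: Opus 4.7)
The plan is to apply Lemma \ref{lemnotradial} with the radial potential $V := 1 - f'(u) \in \C(\bar B_1)$ to produce an $N$-dimensional eigenspace of $L := -\Delta + V$ (with Neumann BC on $\partial B_1$) at some common eigenvalue $\mu$, to show that $\mu < 0$, and to observe that the first Neumann eigenvalue $\mu^*$ of $L$ satisfies $\mu^* < \mu$. This yields at least $N+1$ negative eigenvalues, i.e.\ the desired Morse-index bound.

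Since $u$ is radial, $V$ is invariant under each reflection across $\Omega_i$, so Lemma \ref{lemnotradial} provides $N$ linearly independent Neumann eigenfunctions $\psi_1^*, \dotsc, \psi_N^*$ of $L$ on $B_1$, each obtained as the odd extension of the first eigenfunction $\psi_i$ of $L$ on $\Omega_i^+$ with Dirichlet BC on $\Omega_i$ and Neumann BC on $\partial B_1 \cap \overline{\Omega_i^+}$. Radial symmetry forces the corresponding first mixed eigenvalues to share a common value $\mu$, and the lemma asserts that none of the $\psi_i^*$ is a first Neumann eigenfunction of $L$ on $B_1$. As the first Neumann eigenvalue of $L$ is simple with a positive eigenfunction (standard elliptic theory), the sign-changing $\psi_i^*$ cannot realise it, and therefore $\mu^* < \mu$.

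The heart of the argument is $\mu < 0$. Differentiating the PDE in $x_i$ gives $L(\partial_i u) = 0$, and $\partial_i u$ vanishes on all of $\partial \Omega_i^+$: on $\Omega_i$ because $x_i = 0$, and on the spherical cap $\partial B_1 \cap \overline{\Omega_i^+}$ because $\partial_i u = (x_i/r)\,u'(r)$ while $u'(1) = 0$ by the Neumann BC on $u$. Using $\partial_i u$ as a test function in the Rayleigh quotient of the mixed problem on $\Omega_i^+$ and integrating by parts, both the interior term $\int_{\Omega_i^+} L(\partial_i u)\,\partial_i u$ and the boundary term $\int_{\partial \Omega_i^+} \partial_\nu(\partial_i u)\,\partial_i u$ vanish, so $\mu \le 0$. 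If equality held, $\partial_i u$ would minimise the Rayleigh quotient and, by Euler--Lagrange, would be a first eigenfunction of the mixed problem; in particular it would satisfy the Neumann BC on the cap, i.e.\ $\partial_\nu(\partial_i u)|_{r=1} = u''(1)\,x_i = 0$, forcing $u''(1) = 0$. Combined with $u'(1) = 0$, the radial ODE evaluated at $r = 1$ gives $u''(1) = u(1) - f(u(1))$, so $u(1)$ would be a fixed point of $f$. But then the constant function equal to $u(1)$ satisfies the same radial ODE with the same Cauchy data $(u(1), 0)$ at $r = 1$; Picard--Lindelöf uniqueness for the ODE at the regular point $r = 1$ then forces $u$ to be identically equal to this constant, contradicting the hypothesis. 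Hence $\mu < 0$.

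Combining $\mu^* < \mu < 0$ produces at least $N + 1$ negative Neumann eigenvalues of $L$ — the simple $\mu^*$ plus the $N$-dimensional eigenspace at level $\mu$ — which is the Morse-index lower bound. The main obstacle is the strict inequality $\mu < 0$ in the regime where $\partial_i u$ does not change sign on $\Omega_i^+$ (i.e.\ $u$ is radially monotone), where the Rayleigh-quotient computation alone only yields $\mu \le 0$ and the ODE-uniqueness step is crucial to rule out $u''(1) = 0$.
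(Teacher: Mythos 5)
Your proof is correct and follows the same overall scaffolding as the paper's: invoke Lemma~\ref{lemnotradial} for the potential $V = 1 - f'(u)$, show the associated mixed eigenvalues $\mu_i$ are negative, and add the first Neumann eigenvalue. But your argument for $\mu_i < 0$ is genuinely different in two places, and it is worth noting the comparison.

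For $\mu_i \le 0$, the paper restricts attention to a connected component $D$ of $\{\partial_i u \ne 0\}$, observes that $\partial_i u$ is a sign-definite Dirichlet eigenfunction of $L$ on $D$ with eigenvalue $0$, and then chains domain monotonicity with the inequality ``mixed $\le$ Dirichlet''. You instead test the Rayleigh quotient on all of $\Omega_i^+$ directly with $\partial_i u$, which works because --- unlike in the non-radial setting of Aftalion--Pacella --- $\partial_i u$ vanishes on the \emph{entire} boundary $\partial\Omega_i^+$ (on $\Omega_i$ since $x_i = 0$ and on the cap since $u'(1)=0$), so both boundary and interior terms vanish after integration by parts. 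That removes the need for the connected component $D$ altogether.

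For the strict inequality $\mu_i < 0$, the paper's proof rules out equality via H\"opf's lemma on the spherical cap: if $\mu_i = 0$, the extension of $\partial_i u|_D$ by zero would be a sign-definite first mixed eigenfunction vanishing with zero normal derivative on the cap, which is impossible. You argue instead that equality would force $\partial_i u$ to satisfy the natural (Neumann) boundary condition on the cap, i.e.\ $\partial_\nu(\partial_i u)|_{r=1} = x_i\,u''(1) = 0$, hence $u''(1) = 0$; evaluating the radial ODE at $r=1$ (where $u'(1) = 0$) gives $u(1) = f(u(1))$, and Picard--Lindel\"of uniqueness at the regular point $r=1$ then forces $u \equiv u(1)$, a contradiction with $u$ non-constant. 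This ODE-uniqueness detour is a clean substitute for H\"opf's lemma that makes the radial structure do the work; the paper's H\"opf-based argument, on the other hand, is closer to the Aftalion--Pacella template and would survive more general (not necessarily radial) settings. Both are valid; yours is slightly more elementary in the radial case.
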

\begin{proof}
  Let 
  $$L:= v\mapsto -\Delta v + v - f'(u)v$$ 
  be the linearized operator around $u$ associated to~\eqref{pblPf}.
  Lemma~\ref{lemnotradial} implies the existence of
  $N$ linearly independent eigenfunctions $\psi_i^*$ (none of which is
  a first eigenfunction) for $L$ with zero Neumann boundary
  conditions.  We aim to show that the corresponding eigenvalues
  $\mu_{i}$ are negative.  If we do so, the proof will be complete
  because none of the eigenvalues $\mu_i$ corresponds to a first
  eigenfunction, so the first eigenvalue, which is smaller than all
  $\mu_i$, is also negative.

  Take $i\in\{1,\ldots, N\}$.  Recall that $\mu_i$ is the first
  eigenvalue of $L$ on $\Omega_i^+$ with zero Dirichlet boundary
  conditions on $\Omega_i$ and zero Neumann boundary conditions on
  $\partial \Omega_i^+ \setminus \Omega_i$ (hereafter referred to as
  ``mixed boundary conditions'').

  We know that $\partial_{x_i} u=0$ on $\Omega_i$ and $\partial B_1$
  because of the boundary conditions and the fact that $u$ is radial.
  Now, pick $\Bar{x}\in \Omega_{i}^+$ such that
  $\partial_{x_i}u (\Bar{x})\neq 0$. Such a point exists because $u$
  is radially symmetric and not constant. Let $D$ be the connected
  component of $\{x \mid \partial_{x_i}u(x)\neq 0\}$ containing
  $\Bar{x}$.  Taking partial derivatives in the
  equation~\eqref{pblPf}, we infer that $L(\partial_{x_i}u)=0$.
  Since $\partial_{x_i}u$ does not change sign on
  $D$, we infer that $0$ is the first eigenvalue of $L$ in $D$ with
  zero Dirichlet boundary conditions.
  As $D\subset \Omega_i^+$, the first eigenvalue of $L$ in
  $\Omega_i^+$ with zero Dirichlet boundary conditions is non-positive.
  This in turn implies that $\mu_i < 0$.
  Indeed, if this was not true, then the variational formulation of the
  first eigenvalue would imply that the extension of
  $\partial_{x_i} u$ by $0$ on $\Omega_i^+ \setminus D$ gives a first
  eigenfunction of $L$ on $\Omega^+_i$ with both Dirichlet and mixed
  boundary conditions.
  This contradicts H\"opf's Lemma on $\partial\Omega_i^+
  \setminus \Omega_i$.
\end{proof}

Going back to Problem~\eqref{pblP1}, we conclude that, because they
have Morse index~$1$, least energy solutions are either constant or
non-radial.  In particular, least energy solutions cannot be radial
in the range $2+\lambda_2 < p \le 2^*$.






\section{Numerics, conjectures and open questions}
\label{expIl}

In this section, we complete our theoretical study with some
numerical computations. These lead to some further observations and
conjectures.

\subsection{Is the first bifurcation responsible of the symmetry breaking?}
\label{sec:first-bifurc}

We have seen that the constant state $u=1$ is not a least energy solution for $p> 2 + \lambda_2$. A natural question is whether $2 + \lambda_2$ is optimal.  Observe
that the first bifurcation starting from $1$ (which is not a radial
bifurcation) occurs at $(p,u)=(2+\lambda_2, 1)$, see
Section~\ref{bifu}. It is therefore natural to think that the
solutions along this bifurcation branches provide least energy solutions.

In this subsection, we first investigate, on a ball $B_R$, whether this bifurcation is super-critical which is a crucial step to understand the optimality of $2+\lambda_2$. If we apply the Proposition~\ref{bifurc-simple} at the non-radial bifurcation point $(p,u)=(2+\lambda_2,1)$, we get that
$b=0$ as second eigenfunctions of $-\Delta$ are odd with respect to a
diameter. We thus need to compute $c$.
We denote the second eigenfunction and eigenvalue of $-\Delta$ with
Neumann boundary conditions on $B_1$ by $\Bar\phi_2$ and
$\Bar{\lambda}_2 = \lambda_2(B_1) >0$, $\Bar\phi_{2}$ being normalized
so that $\norm{\Bar\phi_2}_{L^2}=1$.
Let $w$ and $\Bar w$ be respectively the solutions to
$$
  -\Delta w -\lambda_2  w
  = \phi_2^2,\ x\in B_{R},\quad\partial_{\nu}w=0,\ x\in\partial
  B_{R}
$$
and
$$-\Delta \Bar w- \Bar{\lambda}_2 \Bar{w} =
\bar{\phi}_2^2,\ x\in B_{1},\quad\partial_{\nu}\bar w=0, x\in\partial B_{1}.$$  
Then we easily get that
\begin{equation*}
  \begin{split}
    c&= \frac{1}{6} (1 + \lambda_2) \lambda_2
    \Bigl( (\lambda_2-1)\int_{B_R}\phi_2^4 - 3 (1+\lambda_2) \lambda_2
    \int_{B_R}\phi_2^2 w\Bigr)\\
    &=\frac{1}{6}\Bar{\lambda}_2 R^{-(N+2)}
    \Bigl(1 + \frac{\Bar{\lambda}_2}{R^2}\Bigr)
    \Bigl( (\beta -  \alpha)
    \frac{\Bar{\lambda}_2}{R^2} + \beta + \alpha\Bigr)
  \end{split}
\end{equation*}
where $\alpha := \int_{B_1} \Bar\phi_2^4$ and $\beta := -3
\Bar{\lambda}_2 \int_{B_1} \Bar\phi_2^2 \Bar w$.

A numerical computation of $c$ leads to Figure~\ref{compC}.  One
can remark that, for $N=2$, the bifurcation seems to always be
super-critical whatever $R > 0$ is.
For larger $N$, the computations show that the
bifurcation should be super-critical, except for small radii $R$.
However, for these radii, we have $2 + \lambda_2 > 2^*$.  Indeed
$2 + \lambda_2(R) < 2^*$ holds if and only if one is to the right of the
dot on the curve.

\begin{figure}[htb]
  \centering
  \tikzsetnextfilename{bifurcation-c}
  \begin{tikzpicture}[x=7ex, y=2.5ex]
    \draw[->] (0,0) -- (4.8,0) node[below right, xshift=-2ex]{%
      $R$, radius of the ball};
    \draw[->] (0,-4.3) -- (0,5.5) node[left]{$R^{N+2} c$};
    \foreach \x in {1,...,4}{%
      \draw (\x,0.2) -- (\x,-0.2) node[below]{\tiny \x};
    }
    \foreach \y in {-4,-3,-2,-1,1,2,3,4}{%
      \draw (0.08,\y) -- (-0.08,\y) node[left]{\tiny \y};
    }

    \node[color=n2, right] at (0.85, 4) {$N=2$};
    \node[color=n3, left] at (1.2, 1.2) {$N=3$};
    \node[color=n7, right] at (1.9, -3) {$N=7$};
    \begin{scope}[thick]
      \clip (0,-4.2) rectangle (4.5,5.3);
      \input{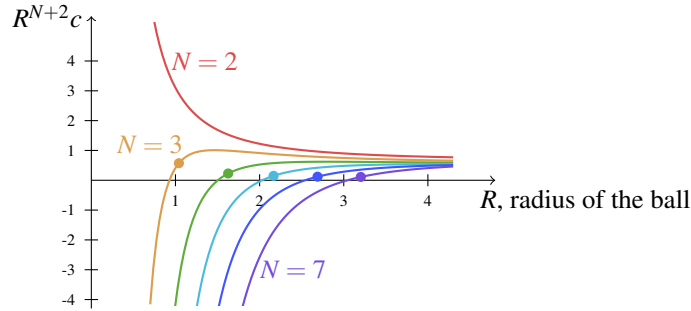}
    \end{scope}
  \end{tikzpicture}
  \vspace*{-2ex}
  \caption{Values of $c$ for $N = 2,\dotsc, 7$ as a function of $R$.
    The dot indicates the $R$ such that $2 + \lambda_2(B_R) = 2^*$.}
  \label{compC}
\end{figure}

Therefore, as we expected, the numerical experiments indicate that the
bifurcation at $p=2+\lambda_2$ yields non-radial solutions with energy
less than the energy of the trivial solution~$1$.

Of course, a bifurcation from the trivial solution $u=1$ is not the only mechanism that can justify the birth of a branch of ground state solutions. Remember that for values of $p$ close to $2$, there is uniqueness of the positive solution and it seems unlikely that a new branch starts from a degenerate ground state at some $p_{0}<2 + \lambda_2(B_R)$.  Whereas we cannot exclude that situation a priori, we give a numerical evidence that excludes this issue by implementing the mountain pass algorithm~\cite{mckenna,zhou1, zhou2}. For any tested values $R>0$ and $p<2+\lambda_2$, the algorithm finds the constant solution $1$. For our choices of  $R>0$ and $p> 2+\lambda_2$, the algorithm always finds a positive non-radial solution with energy less than the energy of $u=1$ (as it it should be from the results of O.~Lopes~\cite{Lopes}). We display the outcome of the mountain pass algorithm on Figure~\ref{GS:R=1} (resp.~\ref{GS:R=3}) and Table~\ref{tableV} for $N=2$, $p = \lceil 2 + \lambda_2\rceil$ and $R =1$ (resp.\ $R=3$). Observe also that the computed solutions look foliated Schwarz symmetric as they should be~\cite{vsc05,Weth}.

\newcommand{\meshline}[5]{
  \ifthenelse{\equal{#1}{0,0,0}}{%
    \draw[thick] (#2,#3) -- (#4,#5); 
  }{%
    \definecolor{mesh}{RGB}{#1}%
    \draw[color=mesh] (#2,#3) -- (#4,#5);
  }}

\begin{figure}[htb]
  \centering
  \begin{minipage}[b]{0.4\linewidth}
    \centering
    \smash{\raisebox{-12ex}{%
        \includegraphics[width=30ex]{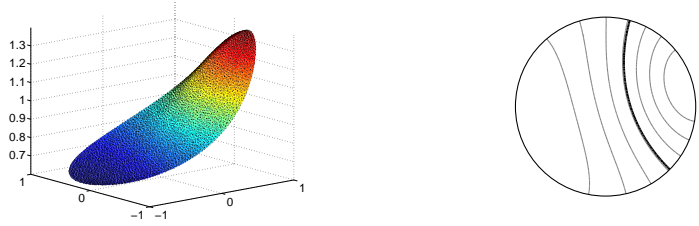}}}
  \end{minipage}
  \hfil
  \begin{minipage}[b]{0.4\linewidth}
    \centering
    \begin{tikzpicture}[x=7.5ex, y=7.5ex]
      \begin{scope}[rotate=35]
        \input{\graphpath neumannR1.tex}
      \end{scope}
      \draw (0,0) circle(1);
    \end{tikzpicture}
  \end{minipage}
  \caption{Non radially symmetric least energy solution on
    $B_1 \subseteq \IR^2$.}

  \label{GS:R=1}
\end{figure}

\begin{figure}[htb]
  \centering
  \begin{minipage}[b]{0.4\linewidth}
    \centering
    \smash{\raisebox{-12ex}{%
        \includegraphics[width=30ex]{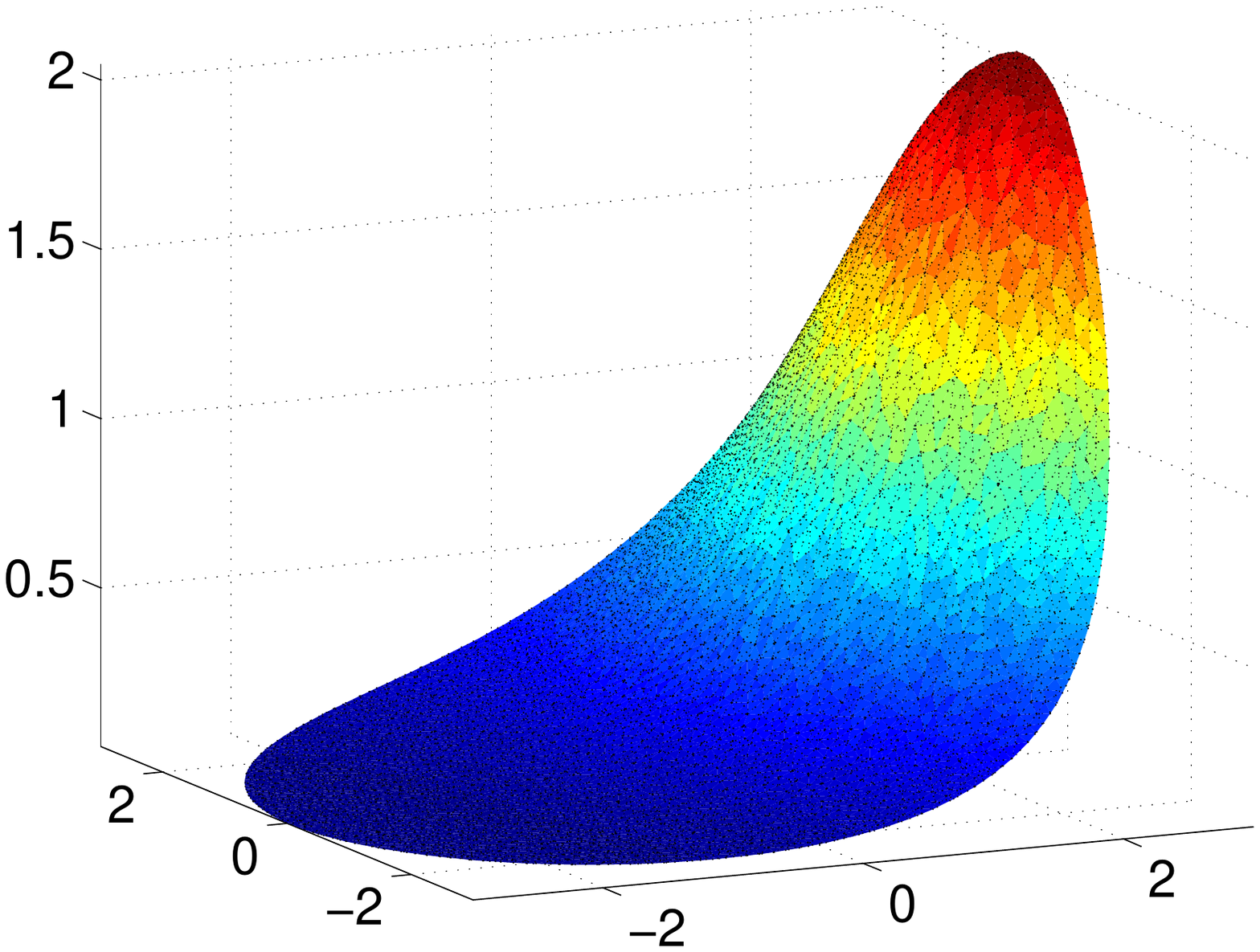}}}
  \end{minipage}
  \hfil
  \begin{minipage}[b]{0.4\linewidth}
    \centering
    \begin{tikzpicture}[x=2.5ex, y=2.5ex]
      \input{\graphpath neumannR3.tex}
      \draw (0,0) circle(3);
    \end{tikzpicture}
  \end{minipage}
  \caption{Non radially symmetric least energy solution on
    $B_3 \subseteq \IR^2$.}
  \label{GS:R=3}
\end{figure}

\begin{table}[!ht]
  \begin{center}
    \begin{tabular}{c r@{.}l c r@{.}l r@{.}l r@{.}l r@{.}l
        r@{.\vrule height 2.3ex width 0pt}l}
      $R$& \multicolumn{2}{c}{$2+\lambda_2$}& $p$ &\multicolumn{2}{c}{$\min u$}
      & \multicolumn{2}{c}{$\max u$} &
      \multicolumn{2}{c}{$\E(u)$} & \multicolumn{2}{c}{$\E(1)$} &
      \multicolumn{2}{c}{$\norm{\nabla \E(u)}$}\\  \hline
      1& 5&39&  6&  0&62&  1&36&  1&024&  1&047& 1&6 $\cdot 10^{-9}$\\
      \hline
      3& 2&38&  3&  0&03&  2&05&  2&800&  4&71&  1&6 $\cdot 10^{-12}$
    \end{tabular}
  \end{center}
  \caption{Characteristics of non-symmetric ground state
    $u$ on $B_R \subseteq \IR^2$.}
\label{tableV}
\end{table}

Owing to this foliated Schwarz symmetry, one can also numerically explore the behavior of ground states in $B_R \subseteq \IR^N$ for $N \ge 2$.  Indeed, we can assume that ground state solutions only depend on
\begin{equation*}
  (x_1,\rho) := \bigl(x_{1},\sqrt{x_2^2 +\cdots + x_N^2} \bigr) \in
  \bigl\{ (x_1, \rho)\in \IR\times\IR \bigm|
  x_1^2 + \rho^2 < R,\ \rho > 0 \bigr\}.
\end{equation*}
Let $(u_p)_{2 < p < 2^*}$ be a family of ground state solutions. Since $u=1$ is a competitor in the Nehari manifold $\N_{1,p}$, we have
\begin{equation*}
  \E_{1,p}(u_{p})
  = (\tfrac{1}{2} - \tfrac{1}{p}) \norm{u_{p}}_{H^1}^2
  \le \E_{1,p}(1)
  = (\tfrac{1}{2} - \tfrac{1}{p}) \norm{1}_{H^1}^2
\end{equation*}
and obviously this implies that ground state solutions are bounded for
all $p \in \intervaloo{2, 2^*}$. The graphs in
Figures~\ref{fig:norm-gs} and~\ref{fig:max-gs} support the fact
that the ground state solution form a continuum bifurcating from
$(2 + \lambda_2, 1)$.

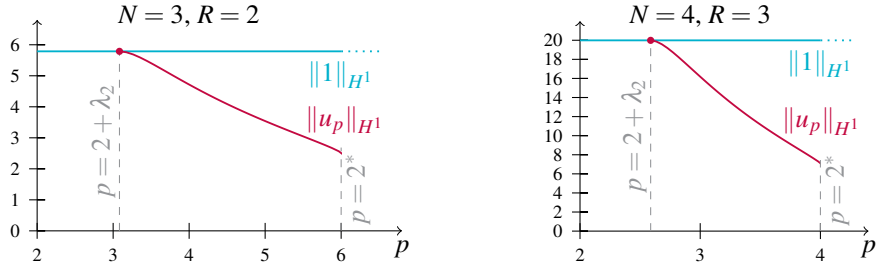
\begin{figure}[htb]
  \begin{minipage}[b]{0.48\linewidth}
    \centering
    \begin{tikzpicture}[y=2.6ex]
      \node at (4,18ex) {$N = 3$, $R = 2$};
      \draw[->] (2,0) -- (6.8,0) node[below]{$p$};
      \draw[->] (2,0) -- +(0, 6.8);
      \foreach \x in {2,...,6}{%
        \draw (\x, 3pt) -- +(0, -6pt) node[below]{\scriptsize $\x$};
      }
      \foreach \y in {0,...,6}{%
        \draw (2,\y) -- +(3pt,0) -- +(-3pt, 0) node[left]{\scriptsize $\y$};
      }
      \begin{scope}[color=umons-gray]
        \draw[dashed] (3.083240, 0) -- (3.083240, 5.788081)
        node[pos=0.5,xshift=-1.4ex,rotate=90]{$p = 2 + \lambda_2$};
        \draw[dashed] (6,0) -- (6., 2.685641)
        node[pos=0.5,xshift=1.4ex,rotate=90]{$p = 2^*$};
      \end{scope}
      \begin{scope}[color=umons-turquoise, line width=0.7pt]
        \draw (2, 5.788081) -- +(4,0) node[below]{$\norm{1}_{H^1}$};
        \draw[dotted] (6, 5.788081) -- +(0.5, 0);
      \end{scope}
      \begin{scope}[color=umons-red, line width=0.7pt]
        \node[above right] at (5.4,2.8) {$\norm{u_p}_{H^1}$};
        \clip (2,0) rectangle (6,6);
        \draw plot file{\graphpath schwarz-gs-N3-R2.dat};
        \draw[fill] (3.083240, 5.788081) circle(1pt);
      \end{scope}
    \end{tikzpicture}
  \end{minipage}
  \hfill
  \begin{minipage}[b]{0.48\linewidth}
    \centering
    \begin{tikzpicture}[x=10ex,y=0.8ex]
      \node at (3, 18ex) {$N = 4$, $R = 3$};
      \draw[->] (2,0) -- (4.4,0) node[below]{$p$};
      \draw[->] (2,0) -- +(0, 22);
      \foreach \x in {2,...,4}{%
        \draw (\x, 3pt) -- +(0, -6pt) node[below]{\scriptsize $\x$};
      }
      \foreach \y in {0,2,...,20}{%
        \draw (2,\y) -- +(3pt,0) -- +(-3pt, 0) node[left]{\scriptsize $\y$};
      }
      \begin{scope}[color=umons-turquoise, line width=0.7pt]
        \draw (2, 19.9896) -- +(2,0) node[below]{$\norm{1}_{H^1}$};
        \draw[dotted] (4, 19.9896) -- +(0.3, 0);
      \end{scope}
      \begin{scope}[color=umons-red, line width=0.7pt]
        \node[above right] at (3.6,9) {$\norm{u_p}_{H^1}$};
        \clip (2,0) rectangle (4,21);
        \draw plot file{\graphpath schwarz-gs-N4-R3.dat};
        \draw[fill] (2.58773, 19.9896) circle(1pt);
      \end{scope}
      \begin{scope}[color=umons-gray]
        \draw[dashed] (2.58773, 0) -- +(0, 19.9896)
        node[pos=0.5,xshift=-1.4ex,rotate=90]{$p = 2 + \lambda_2$};
        \draw[dashed] (4,0) -- (4, 7.449446)
        node[pos=0.58,xshift=1.4ex,rotate=90]{$p = 2^*$};
      \end{scope}
    \end{tikzpicture}
  \end{minipage}

  \vspace{-1ex}%
  \caption{Norm of the ground state $u_p$
    of \eqref{pblP1}
    on $B_R \subseteq \IR^N$.}
  \label{fig:norm-gs}
\end{figure}

\begin{figure}[htb]
  \begin{minipage}[b]{0.48\linewidth}
    \centering
    \begin{tikzpicture}[y=1.7ex]
      \node at (4, 25ex) {$N = 3$, $R = 2$};
      \draw[->] (2,0) -- (6.8,0) node[below]{$p$};
      \draw[->] (2,0) -- +(0, 17);
      \foreach \x in {2,...,6}{%
        \draw (\x, 3pt) -- +(0, -6pt) node[below]{\scriptsize $\x$};
      }
      \foreach \u in {0,2,...,10} {%
        \draw (2,\u) ++(3pt, 0) -- +(-6pt, 0) node[left]{\scriptsize $\u$};
      }
      \foreach \u/\y in {12/11.64506, 15/13.32699, 20/15.1885,
      }{%
        \draw (2,\y) ++(3pt, 0) -- +(-6pt, 0) node[left]{\scriptsize $\u$};
      }
      \draw[color=umons-turquoise, line width=0.7pt]
      (2,1) -- (6, 1);
      \begin{scope}[color=umons-red, line width=0.7pt]
        \node[above right] at (3.8, 3.1) {$\max u_p$};
        \draw[fill] (3.083240, 1) circle(1pt);
        \clip (2,0) rectangle (6, 17);
        \draw plot file{\graphpath max-gs-N3-R2.dat}; 
      \end{scope}
      \begin{scope}[color=umons-gray]
        \draw[dashed] (3.083240, 0) -- +(0, 10)
        node[pos=0.8,xshift=1.4ex,rotate=90]{$2 + \lambda_2$};
        \draw[dashed] (6,0) -- (6., 16)
        node[pos=0.4,xshift=1.4ex, rotate=90]{$p = 2^*$};
      \end{scope}
    \end{tikzpicture}
  \end{minipage}
  \hfill
  \begin{minipage}[b]{0.48\linewidth}
    \centering
    \begin{tikzpicture}[x=10ex,y=1.3ex]
      \node at (3, 25ex) {$N = 4$, $R = 3$};
      \draw[->] (2,0) -- (4.4,0) node[below]{$p$};
      \draw[->] (2,0) -- +(0, 22);
      \foreach \x in {2,...,4}{%
        \draw (\x, 3pt) -- +(0, -6pt) node[below]{\scriptsize $\x$};
      }
      \foreach \u in {0,2,...,10} {%
        \draw (2,\u) ++(3pt, 0) -- +(-6pt, 0) node[left]{\scriptsize $\u$};
      }
      \foreach \u/\y in {12/11.64506, 15/13.32699, 20/15.1885,
        30/17.48589, 50/20.09040,
      }{%
        \draw (2,\y) ++(3pt, 0) -- +(-6pt, 0) node[left]{\scriptsize $\u$};
      }
      \draw[color=umons-turquoise, line width=0.7pt] (2,1) -- (4, 1);
      \begin{scope}[color=umons-red, line width=0.7pt]
        \node[below right] at (3, 4.3) {$\max u_p$};
        \draw[fill] (2.58773, 1) circle(1pt);
        \clip (2,0) rectangle (4,22);
        \draw plot file{\graphpath max-gs-N4-R3.dat}; 
      \end{scope}
      \begin{scope}[color=umons-gray]
        \draw[dashed] (2.58773, 0) -- +(0, 10)
        node[pos=0.8,xshift=1.4ex,rotate=90]{$2 + \lambda_2$};
        \draw[dashed] (4,0) -- +(0, 21)
        node[pos=0.4,xshift=1.4ex, rotate=90]{$p = 2^*$};
      \end{scope}
    \end{tikzpicture}
  \end{minipage}

  \vspace{-1ex}%
  \caption{Maximum value of the ground state $u_p$ of \eqref{pblP1}
    on $B_R \subseteq \IR^N$.}
  \label{fig:max-gs}
\end{figure}
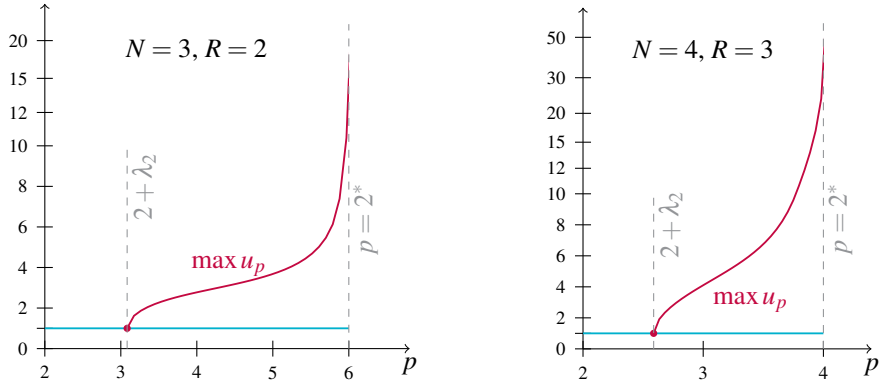

The graphs of some ground state solutions are given by Figure~\ref{fig:ground-states}.  As expected, when $p \approx 2 + \lambda_2$, $u_p$ looks like $1 + \epsilon \phi_2$ where
\begin{equation*}
  \phi{}_2(x) = \abs{x}^{-\frac{N-2}{2}}
  J_{N/2}\bigl(\sqrt{\lambda_2} \abs{x}\bigr) \, x_1
\end{equation*}
is a second eigenfunction of $-\Delta$ that is invariant under rotations in $(x_2, \dotsc, x_N)$ centered at~$0$.
As $p$ approaches $2^*$, the solution becomes mostly flat except for a (bounded) bump  on the $x_1$-axis.  We emphasize that for $p = 2^*$, a least energy solution still exists in $H^1$ as established by Wang~\cite{wang91}.

\begin{figure}[htb]
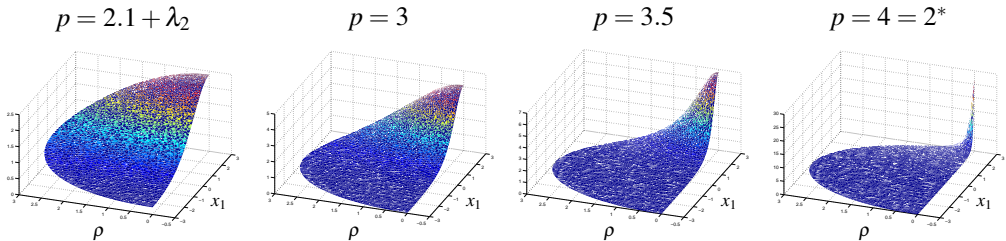

  \centering
  \centerline{%
    \begin{tikzpicture}
      \node at (0,0) {%
        \includegraphics[width=0.24\linewidth,viewport=69 213 554 580]{%
          \graphpath schwarz_N4_R3_p2_68773.pdf}};
      \node at (0, 10ex) {$p = 2.1 + \lambda_2$};
      \node at (-2ex, -8ex) {\scriptsize $\rho$};
      \node at (8ex, -5.5ex) {\scriptsize $x_1$};
    \end{tikzpicture}
    \hfill
    \begin{tikzpicture}
      \node at (0,0) {%
        \includegraphics[width=0.24\linewidth,viewport=68 210 556 580]{%
          \graphpath schwarz_N4_R3_p3.pdf}};
      \node at (0, 10ex) {$p = 3$};
      \node at (-2ex, -8ex) {\scriptsize $\rho$};
      \node at (8ex, -5.5ex) {\scriptsize $x_1$};
    \end{tikzpicture}
    \hfill
    \begin{tikzpicture}
      \node at (0,0) {%
        \includegraphics[width=0.24\linewidth,viewport=75 208 556 580]{%
          \graphpath schwarz_N4_R3_p3_5.pdf}};
      \node at (0, 10ex) {$p = 3.5$};
      \node at (-2ex, -8ex) {\scriptsize $\rho$};
      \node at (8ex, -5.5ex) {\scriptsize $x_1$};
    \end{tikzpicture}
    \hfill
    \begin{tikzpicture}
      \node at (0,0) {%
        \includegraphics[width=0.24\linewidth,viewport=71 212 555 580]{%
          \graphpath schwarz_N4_R3_p4.pdf}};
      \node at (0, 10ex) {$p = 4 = 2^*$};
      \node at (-2ex, -8ex) {\scriptsize $\rho$};
      \node at (8ex, -5.5ex) {\scriptsize $x_1$};
    \end{tikzpicture}%
  }

  \vspace{-1ex}%
  \caption{Graphs of ground states on $B_3 \subseteq \IR^4$.}
  \label{fig:ground-states}
\end{figure}
These numerical investigations motivate the following conjecture.
\begin{Conj}\label{conj:ground-states}
Let $N \ge 2$, $p \in \intervaloo{2, 2^*}$ and $\Omega = B_R$.
  \begin{enumerate}[(a)]
  \item\label{conj:1} The constant $u=1$ is the least energy solution
    to~\eqref{pblP1}
    if and only if $p \le 2+\lambda_2$. In this case,
    there are no other positive solutions.
  \item\label{conj:gs} For $p> 2+\lambda_2$, least energy solutions
    belong to the branch bifurcating from $p = 2+\lambda_2$.
  \end{enumerate}
\end{Conj}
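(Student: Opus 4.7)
For the forward direction of (\ref{conj:1}), if $p > 2+\lambda_2$ then, as recalled at the beginning of Section~\ref{symbre}, the Morse index of the constant $u \equiv 1$ as a critical point of $\E_p$ is at least two, whereas a least energy solution has Morse index one; hence $u \equiv 1$ cannot be least energy. The substantive content of (\ref{conj:1}) is therefore the uniqueness claim: for $p \in \intervaloc{2, 2+\lambda_2}$, the constant $1$ is the only positive solution. My plan is a global continuation argument. Fix $\underline p \in \intervaloo{2, 2+\lambda_2}$ small enough that Theorem~\ref{1=unique sol} gives uniqueness on $\intervaloc{2, \underline p}$, and set
\[
S := \bigl\{ (p,u) \in \intervalcc{\underline p, 2+\lambda_2} \times \C^{2,\alpha}(\overline{B_R}) : u>0 \text{ solves } \eqref{pblP1}\bigr\},
\]
with trivial branch $T := \intervalcc{\underline p, 2+\lambda_2}\times\{1\}$. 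Assuming by contradiction that $(p_0,u_0) \in S\setminus T$, let $C$ be its connected component in $S\setminus T$. Proposition~\ref{apriori-bound} makes $\overline C$ compact in $\IR\times \C^{2,\alpha}$, so Rabinowitz's global alternative forces $\overline C$ either to meet $\{\underline p\}\times \C^{2,\alpha}$, excluded by our choice of $\underline p$, or to hit $T$ at a bifurcation point in $\intervalcc{\underline p, 2+\lambda_2}$. By \eqref{eq:p-bifurc} the only such point is $(2+\lambda_2,1)$. Ruling this out amounts to showing the non-radial bifurcation at $(2+\lambda_2,1)$ is supercritical, i.e.\ the constant $c$ of Section~\ref{sec:first-bifurc} is positive, which is strongly supported by Figure~\ref{compC}.

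For part~(\ref{conj:gs}), I would work in the subspace of axially symmetric functions, where the kernel at $(2+\lambda_2,1)$ is one-dimensional, spanned by $\phi_2(x) = \abs{x}^{-(N-2)/2} J_{N/2}(\sqrt{\lambda_2}\abs{x})\, x_1$. Proposition~\ref{bifurc-simple}, combined with $c>0$, then produces a $\C^1$ branch $B_2$ of positive solutions emanating to the right of $p = 2+\lambda_2$ with expansion $u_p = 1 + \sqrt{(p-p^*)/c}\,\phi_2 + o\bigl(\abs{p-p^*}^{1/2}\bigr)$. A direct computation of $\E_p''(u_p)$ shows that $u_p$ has Morse index one on this branch near the bifurcation, so these solutions are local minimizers on the Nehari manifold; together with the foliated Schwarz symmetry of least energy solutions~\cite{Weth} and local uniqueness of the bifurcating continuum, this forces least energy solutions sufficiently close to $(2+\lambda_2,1)$ to lie on $B_2$. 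To reach the full range $\intervaloo{2+\lambda_2,2^*}$ I would track Morse index along $B_2$: showing it remains equal to one simultaneously prevents turning points and, through an open-closed argument exploiting the subcritical a priori bound of Proposition~\ref{apriori-bound}, propagates the property of being a least energy solution along the whole branch.

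The main obstacles are threefold. First, proving $c>0$ rigorously, for every $R>0$ and $N\ge 2$, requires a subtle estimate involving $\int \Bar\phi_2^4$ and the resolvent $\Bar w$ of $-\Delta - \Bar\lambda_2$ applied to $\Bar\phi_2^{\,2}$; an extension of Lemma~\ref{lemma:integ>0} to mixed Bessel/resolvent expressions seems needed, and the numerics reveal borderline behavior for small $R$ in large dimensions. Second, globally controlling the Morse index along $B_2$ is delicate: secondary bifurcations in the axially symmetric but non-radial direction could in principle occur, and ruling them out appears to require a spectral continuation argument together with a non-degeneracy estimate in the spirit of the proof of Theorem~\ref{transcritical-radial-bifurc}. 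Third, the uniqueness step in (\ref{conj:1}) implicitly excludes bifurcations from infinity; the subcritical a priori bound takes care of this whenever $2+\lambda_2 < 2^*$, but as $2+\lambda_2 \to 2^*$ (large $R$ with $N\ge 3$) the constant in Proposition~\ref{apriori-bound} degenerates, and a separate blow-up analysis in the spirit of Gidas--Spruck would be needed to exclude concentration phenomena producing an interior bifurcation from infinity.
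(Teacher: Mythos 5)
This is labeled as a \emph{Conjecture} in the paper; the authors explicitly state they have no proof and offer only numerical support (mountain-pass computations, the plot of the coefficient $c$ in Figure~\ref{compC}, and the continuum displayed in Figures~\ref{fig:norm-gs}--\ref{fig:ground-states}). So there is no paper proof to compare against; what you have written is a strategy for an open problem, and it should be read as such. Your opening observation --- that for $p > 2+\lambda_2$ the constant $1$ has Morse index $\sum_{k\le i}\dim E_k \ge 1 + N > 1$ and hence cannot be a ground state --- is indeed correct and is already contained in Section~\ref{symbre} (and goes back to~\cite{Lin-Ni}).

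The obstacles you name are real and coincide with what the authors flag: proving $c>0$ rigorously (the quantity $c$ involves both $\int \Bar\phi_2^4$ and a resolvent term, and Lemma~\ref{lemma:integ>0} does not obviously extend to it), controlling the Morse index along $B_2$ to exclude turning points and secondary bifurcations, and handling a potential bifurcation from infinity near $p = 2^*$. But there is one additional, more fundamental gap in your uniqueness argument for part~(\ref{conj:1}) that you do not acknowledge. The continuum $C$ you extract from $S \setminus T$ is \emph{not} a Rabinowitz branch emanating from the trivial line, and the global alternative (unbounded in $\IR\times X$ or returning to a bifurcation point on $T$) simply does not apply to an arbitrary component of the solution set. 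A compact isola --- a closed loop of positive solutions sitting in $\intervaloo{\underline p, 2+\lambda_2}\times \C^{2,\alpha}$ that never touches $T$ nor $\{\underline p\}\times\C^{2,\alpha}$ --- is fully consistent with Proposition~\ref{apriori-bound} and with everything you invoke; compactness of $\overline C$ is not a contradiction, it is precisely what a loop looks like. Leray--Schauder degree continuation does not rule this out either, since an isola has total index zero. This is exactly the scenario the authors describe when they write that ``a bifurcation from the trivial solution $u=1$ is not the only mechanism that can justify the birth of a branch of ground state solutions'' and that they ``cannot exclude that situation a priori.'' Until one has a structural reason (monotonicity, convexity, uniqueness of non-degenerate continuation as in Miyamoto's ODE approach) to preclude isolas, the reduction of uniqueness to ``the bifurcation at $2+\lambda_2$ is supercritical'' is incomplete.

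Two smaller remarks. First, in part~(\ref{conj:1}) you apply Crandall--Rabinowitz ideas at $(2+\lambda_2,1)$ in the full space $\C^{2,\alpha}(\overline{B_R})$, where the kernel is $N$-dimensional; supercriticality in the sense you need (all nontrivial solutions near the bifurcation have $p>2+\lambda_2$) requires either the equivariant bifurcation theorem modulo the $O(N)$-action or a restriction to the axially symmetric subspace followed by the symmetry result of~\cite{Weth,Lopes}, and neither step is automatic. Second, in part~(\ref{conj:gs}) the implication ``Morse index one on $B_2$ $\Rightarrow$ least energy'' is not correct as stated: Morse index one makes $u_p$ a local minimizer on the Nehari manifold, but global minimality requires comparing against all competitors, which is exactly what the open--closed argument has to supply; as written, it is circular unless you also know a priori that ground states depend continuously on $p$ and cannot jump to a different component.
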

Assertion (\ref{conj:1}) is supported by the computation of mountain
pass solutions. Assertion (\ref{conj:gs}) is motivated by the
numerical evidence that the bifurcation at $(2+\lambda_2,1)$ is
super-critical and the investigation along the branch of least energy
solutions.

Note that Figures~\ref{fig:norm-gs} and~\ref{fig:max-gs} suggest
that the branch of ground state solutions exists for all
$p \in \intervaloo{2, +\infty}$.  Proving that this is indeed the case
would be interesting, see
Conjecture~\ref{conj:nonradial-supercritical} in the Introduction, and
will be the subject of a future project.

\subsection{The first radial bifurcation}
\label{num:1st radial bifurc}%
We display in this subsection some numerical computations illustrating the first bifurcation in the space of radially symmetric functions. One naturally expects that on this first branch, the solutions are least energy \emph{radial} solutions, namely least energy solutions among radial functions. This bifurcation arises at $(1,2+
\lambdarad_2)$ where $\lambdarad_2$ is the second radial
eigenvalue. The numerics are performed on a ball of radius $R=4$ so
that $2 + \lambdarad_2 < 2^*$ for $N \in\{ 2, 3, 4\}$.
We recall that this bifurcation is
transcritical, as follows from Theorem~\ref{transcritical-radial-bifurc}. Using
the Mountain Pass Algorithm to approximate the least energy radial
solution, one gets (as expected) a
decreasing solution to~\eqref{pblP1} different
from~$1$ for $p \lesssim 2 + \lambdarad_2$, as stated by Theorem~\ref{transcritical-radial-bifurc}.  This solution is drawn on
the left of Figure~\ref{fig:rad-below} and its characteristics are
given in Table~\ref{table:rad-below}.
Using a shooting method, a second positive decreasing solution $u_2$ is found. It is pictured on the right of Figure~\ref{fig:rad-below} and some characteristics are given in Table~\ref{table:rad-below}. It has higher energy that both the previous decreasing solution and~$u=1$.

\begin{figure}[!hbt]
  \centering
  \newcommand{\R}{4}

  \null\hfill
  \tikzsetnextfilename{bifurcation-radial-2-below}%
  \begin{tikzpicture}[x=6ex,y=2.8ex]
    \newcommand{\ymin}{0}%
    \newcommand{\ymax}{5.9}%
    \draw[->] (0,\ymin) -- ++(\R + 0.5, 0)
    node[below right, xshift=-2ex]{$r = \abs{x}$};
    \draw[->] (0,\ymin) -- (0, \ymax) node[left]{$u_1$};
    \foreach \x in {0,...,\R}{%
      \draw (\x,\ymin) ++(0,3pt) -- ++(0,-6pt)
      node[below]{\scriptsize $\x$};
    }
    \foreach \y in {0,...,5}{%
      \draw (0, \y) ++(3pt,0) -- ++(-6pt,0) node[left]{\scriptsize $\y$};
    }
    \draw[dashed] (0,1) -- (\R,1);
    \begin{scope}
      \clip (0,0) rectangle (\R, \ymax);
      \draw[thick, color=n4] plot file{\graphpath radial-N4-R4-2-below.dat};
      \node[color=n4, right] at (0.4, 5) {$N=4$};
    \end{scope}
    \draw[thick, color=n3] plot file{\graphpath radial-N3-R4-2-below.dat};
    \node[color=n3, right] at (0.72, 3) {$N=3$};
    \draw[thick, color=n2] plot file{\graphpath radial-N2-R4-2-below.dat};
    \node[color=n2, above right] at (1.7, 1.05) {$N=2$};
  \end{tikzpicture}
  \hfill
  \begin{tikzpicture}[x=6ex,y=42ex]
    \newcommand{\ymin}{0.9}%
    \newcommand{\ymax}{1.3}%
    \draw[->] (0,\ymin) -- ++(\R + 0.5, 0)
    node[below right, xshift=-2ex]{$r = \abs{x}$};
    \draw[->] (0,\ymin) -- (0, \ymax) node[left]{$u_2$};
    \foreach \x in {0,...,\R}{%
      \draw (\x,\ymin) ++(0,3pt) -- ++(0,-6pt)
      node[below]{\scriptsize $\x$};
    }
    \foreach \y in {0.9, 1, 1.1, 1.2}{%
      \draw (0, \y) ++(3pt,0) -- ++(-6pt,0) node[left]{\scriptsize $\y$};
    }
    \draw[dashed] (0,1) -- (\R,1);
    \begin{scope}
      \clip (0,\ymin) rectangle (\R, \ymax);
      \draw[thick, color=n4] plot file{\graphpath radial-N4-R4-u0-1.00003.dat};
      \node[color=n4, below right] at (0.1, 1) {$N=4$};
    \end{scope}
    \draw[thick, color=n3] plot file{\graphpath radial-N3-R4-u0-1.0953.dat};
    \node[color=n3, above right, yshift=-1pt, rotate=-10] at (0, 1.1) {$N=3$};
    \draw[thick, color=n2] plot file{\graphpath radial-N2-R4-u0-1.20215.dat};
    \node[color=n2, above right, yshift=-1pt] at (0.1, 1.2) {$N=2$};
  \end{tikzpicture}
  \hfill\null

  \vspace{-2ex}
  \caption{Profile of non constant radial solutions
    ($p = 1.95 + \lambdarad_2$).}
  \label{fig:rad-below}
\end{figure}
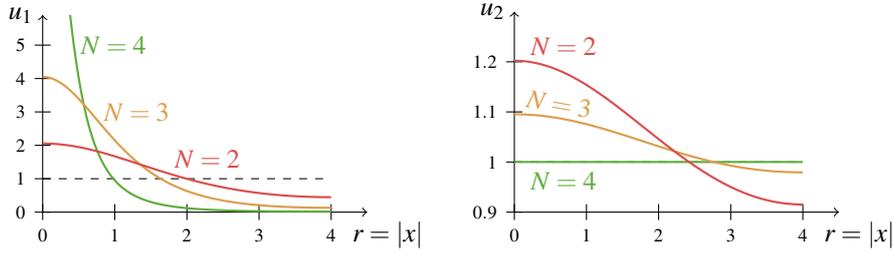

\begin{table}[ht]
  \begin{equation*}
    \begin{array}{c c r@{.}l|r@{.}l r@{.}l r@{.}l|r@{.}l r@{.}l r@{.}l}
      N& {2 + \lambdarad_2}& \multicolumn{2}{c|}{\E(1)}&
      \multicolumn{2}{c}{\min u_1}& \multicolumn{2}{c}{\max u_1}&
      \multicolumn{2}{c|}{\E(u_1)}&
      \multicolumn{2}{c}{\min u_2}& \multicolumn{2}{c}{\max u_2}&
      \multicolumn{2}{c}{\E(u_2)}\\
      \hline
      2&  2.92& 7&604&   0&447& 2&05& 7&45  & 0&915&  1&202& 7&606\\
      3&  3.26& 50&576&  0&130& 4&05& 34&85 & 0&979&  1&095& 50&578\\
      4&  3.65& 280&581& 0&016& 13&3& 66&39 & 0&999& 1&00003& 280&581\\
      \hline
    \end{array}
  \end{equation*}
  \caption{Characteristics of radial solutions
    ($p = 1.95 + \lambdarad_2$, $R=4$).}
  \label{table:rad-below}
\end{table}

For $p \in \intervaloo{2 + \lambdarad_2, 2^*}$,
the Mountain Pass Algorithm finds two radial solutions $u_1$
and $u_2$ to problem \eqref{pblP1} depending on the starting point.
As an illustration, for $p = 2.1 + \lambdarad_2$, these solutions
are depicted in
Figure~\ref{fig:radpart} and their characteristics are given in
Table~\ref{tabradpart}. The accuracy is relatively good since
$\norm{\nabla\E(u_i)} < 10^{-8}$ for $i=1, 2$.
In agreement with the results of Section~\ref{bifu},
they are positive and possess a single intersection
with $1$.  Moreover,  one solution is increasing along the
radius and the other one is decreasing.

\begin{figure}[!hbt]
  \newcommand{\R}{4}
  \null\hfill
  \tikzsetnextfilename{bifurcation-radial-1}%
  \begin{tikzpicture}[x=6ex,y=40ex]
    \newcommand{\ymin}{0.75}%
    \draw[->] (0,\ymin) -- ++(\R + 0.5, 0)
    node[below right, xshift=-2ex]{$r = \abs{x}$};
    \draw[->] (0,\ymin) -- (0, 1.17) node[left]{$u_1$};
    \foreach \x in {0,...,\R}{%
      \draw (\x,\ymin) ++(0,3pt) -- ++(0,-6pt)
      node[below]{\scriptsize $\x$};
    }
    \foreach \y in {0.8, 0.9, 1, 1.1}{%
      \draw (0, \y) ++(3pt,0) -- ++(-6pt,0) node[left]{\scriptsize $\y$};
    }
    \draw[dashed] (0,1) -- (\R,1);
    \draw[thick, color=n4] plot file{\graphpath radial-N4-R4-1.dat};
    \node[color=n4, below right] at (2.8, 1.) {$N=4$};
    \draw[thick, color=n3] plot file{\graphpath radial-N3-R4-1.dat};
    \draw[thick, color=n2] plot file{\graphpath radial-N2-R4-1.dat};
    \node[color=n2, left] at (3.2, 1.08) {$N=2$};
  \end{tikzpicture}
  \hfill
  \tikzsetnextfilename{bifurcation-radial-2}%
  \begin{tikzpicture}[x=6ex,y=2.8ex]
    \newcommand{\ymin}{0}%
    \newcommand{\ymax}{5.9}%
    \draw[->] (0,\ymin) -- ++(\R + 0.5, 0)
    node[below right, xshift=-2ex]{$r = \abs{x}$};
    \draw[->] (0,\ymin) -- (0, \ymax) node[left]{$u_2$};
    \foreach \x in {0,...,\R}{%
      \draw (\x,\ymin) ++(0,3pt) -- ++(0,-6pt)
      node[below]{\scriptsize $\x$};
    }
    \foreach \y in {1,...,5}{%
      \draw (0, \y) ++(3pt,0) -- ++(-6pt,0) node[left]{\scriptsize $\y$};
    }
    \draw[dashed] (0,1) -- (\R,1);
    \begin{scope}
      \clip (0,0) rectangle (\R, \ymax);
      \draw[thick, color=n4] plot file{\graphpath radial-N4-R4-2.dat};
      \node[color=n4, right] at (0.3, 5) {$N=4$};
    \end{scope}
    \draw[thick, color=n3] plot file{\graphpath radial-N3-R4-2.dat};
    \node[color=n3, right] at (0.65, 3) {$N=3$};
    \draw[thick, color=n2] plot file{\graphpath radial-N2-R4-2.dat};
    \node[color=n2, above right] at (1.5, 1.05) {$N=2$};
  \end{tikzpicture}
  \hfill\null

  \vspace{-2ex}
  \caption{Profile of non constant radial solutions
    ($p = 2.1 + \lambdarad_2$).}
  \label{fig:radpart}
\end{figure}
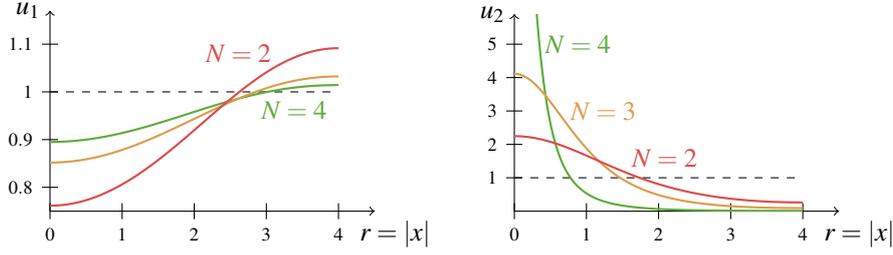

\begin{table}[ht]
  \begin{equation*}
    \begin{array}{c c r@{.}l|r@{.}l r@{.}l r@{.}l|r@{.}l r@{.}l r@{.}l}
      N& {2 + \lambdarad_2}& \multicolumn{2}{c|}{\E(1)}&
      \multicolumn{2}{c}{\min u_1}& \multicolumn{2}{c}{\max u_1}&
      \multicolumn{2}{c|}{\E(u_1)}&
      \multicolumn{2}{c}{\min u_2}& \multicolumn{2}{c}{\max u_2}&
      \multicolumn{2}{c}{\E(u_2)}\\
      \hline
      2&  2.92& 8&48&  0&76& 1&09& 8&47&  0&261& 2&25& 7&39\\
      3&  3.26& 54&30&  0&85& 1&03& 54&29&  0&092& 4&12& 30&74\\
      4&  3.65& 294&63&  0&90& 1&01& 294&62&  0&008& 17&25& 49&61\\
      \hline
    \end{array}
  \end{equation*}
  \caption{Characteristics of radial solutions
    ($p = 2.1 + \lambdarad_2$, $R=4$).}
  \label{tabradpart}
\end{table}

The bifurcation diagram in Figure~\ref{fig:bifurcation} explains how
the above solutions are related: they all belong to the continuum
bifurcating from $2 + \lambdarad_2$. The increasing solutions on the
left of Figure~\ref{fig:radpart} belong
to the branch starting to the right of $2 + \lambdarad_2$. They have lower energy than~$1$ but not the lowest energy.
Their radial Morse index, denoted $\MIrad$, is~$1$ (they are
local minimizers of $\E_{1,p}$ on the Nehari manifold in
$H^1_{\text{rad}}$).  These solutions still exist in the supercritical
range.  The decreasing solutions on Figure~\ref{fig:rad-below}
and on the left of Figure~\ref{fig:radpart}
all belong to the branch emanating to the left of $2 + \lambdarad_2$.
Before the turning point, they have a radial Morse index of~$2$
and have higher energy than~$1$ (see Figure~\ref{fig:bifurcation-energy}).
After the turning point, their radial Morse index is~$1$ and,
as displayed in Figure~\ref{fig:bifurcation-energy}, they become radial ground states for slightly greater~$p$.

\begin{figure}[htb]
  \centering
  \begin{tikzpicture}[x=15ex,y=1.1ex]
    \draw[->] (2,0) -- (5.5,0) node[below]{$p$};
    \draw[->] (2,0) -- +(0, 23) node[left]{$u(0)$};
    \foreach \p in {2,...,5} {%
      \draw (\p, 3pt) -- (\p, -3pt) node[below]{\scriptsize $\p$};
    }
    \foreach \u in {0,2,...,20} {%
      \draw (2,\u) ++(3pt, 0) -- +(-6pt, 0) node[left]{\scriptsize $\u$};
    }
    \begin{scope}[color=umons-red]
      \draw[<-] (3.8, 17.5) -- (4.2, 19)
      node[right, fill=umons-red!10]{\color{black}%
        \footnotesize
        \begin{tabular}{@{}l@{}}
          $u(0) > 1$\\
          decreasing solutions\\
          $\MIrad(u) = 1$
        \end{tabular}};
      \draw[<-] (2.9, 4) .. controls (2.6, 3) and (2.6,6) .. (2.7, 12)
      node[above, fill=umons-red!10]{\color{black}%
        \footnotesize
        \begin{tabular}{@{}l@{}}
          $u(0) > 1$\\
          decreasing solutions\\
          $\E_{1,p}(u) > \E_{1,p}(1)$\\
          $\MIrad(u) = 2$
        \end{tabular}};

      \clip (2,0) rectangle (4, 23);
      \draw[thick] plot file{\graphpath radial-branch-decr.dat};
      \draw[fill] (2.910127, 6.01076) circle(1.5pt);
    \end{scope}
    \begin{scope}[color=umons-turquoise]
      \draw[thick, smooth] plot file{\graphpath radial-branch-incr.dat};
      \draw[<-] (4.5, 0.7) -- (5, 4)
      node[above, fill=umons-turquoise!10]{\color{black}%
        \footnotesize
        \begin{tabular}{@{}l@{}}
          $u(0) < 1$\\
          increasing solutions\\
          $\E_{1,p}(u) < \E_{1,p}(1)$\\
          $\MIrad(u) = 1$
        \end{tabular}};
    \end{scope}
    \draw[dashed] (2,1) -- (5.3, 1);
    \draw[color=black,fill=white] (3.64841, 1) circle(1.5pt);
    \draw (3.64841, 3pt) -- +(0, -6pt);
    \draw [<-] (3.64841, -6pt) -- ++(0, -8pt)
    node[below]{\scriptsize $2 + \lambdarad_2 = 2 + \lambda_5$};
    \draw[dashed, color=umons-gray] (4,0) -- +(0,22.5)
    node[pos=0.5, xshift=1.4ex, rotate=90] {$p = 2^*$};
  \end{tikzpicture}

  \vspace*{-1ex}
  \caption{Radial bifurcation branch from $2 + \lambdarad_2$
    for $N=4$ and $R=4$.}
  \label{fig:bifurcation}
\end{figure}
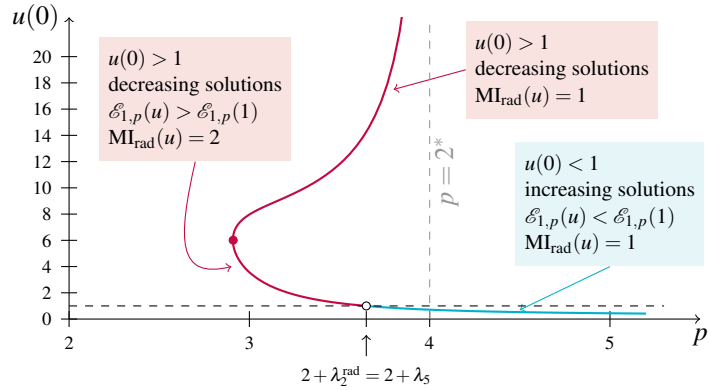

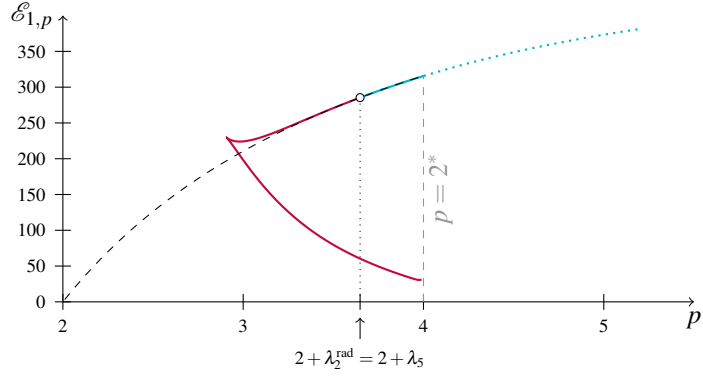
\begin{figure}[htb]
  \centering
  \begin{tikzpicture}[x=15ex,y=0.06ex]
    \draw[->] (2,0) -- (5.5,0) node[below]{$p$};
    \draw[->] (2,0) -- +(0, 400) node[left]{$\E_{1,p}$};
    \foreach \p in {2,...,5} {%
      \draw (\p, 3pt) -- (\p, -3pt) node[below]{\scriptsize $\p$};
    }
    \foreach \u in {0,50,...,350} {%
      \draw (2,\u) ++(3pt, 0) -- +(-6pt, 0) node[left]{\scriptsize $\u$};
    }
    \begin{scope}
      \clip (2,0) rectangle (4, 380);
      \draw[color=umons-red, thick]
      plot file{\graphpath radial-branch-decr-energy.dat};
    \end{scope}
    \begin{scope}[color=umons-turquoise]
      \draw[dotted, thick, smooth]
      plot file{\graphpath radial-branch-incr-energy.dat};
      \clip (2,0) rectangle (4,350);
      \draw[thick, smooth]
      plot file{\graphpath radial-branch-incr-energy.dat};
    \end{scope}
    \begin{scope}
      \clip (2,0) rectangle (4, 350);
      \draw[dashed] plot file{\graphpath radial-branch-1-energy.dat};
    \end{scope}
    \draw[dotted] (3.64841, 0) -- +(0, 285.392024) node(B){};
    \draw[color=black, fill=white] (B) circle(1.5pt);
    \draw (3.64841, 3pt) -- +(0, -6pt);
    \draw [<-] (3.64841, -6pt) -- ++(0, -8pt)
    node[below]{\scriptsize $2 + \lambdarad_2 = 2 + \lambda_5$};
    \draw[dashed, color=umons-gray] (4,0) -- +(0,315.827341)
    node[pos=0.5, xshift=1.4ex, rotate=90] {$p = 2^*$};

  \end{tikzpicture}
  \caption{Energy along the radial branches emanating from $2 +
    \lambdarad_2$ for $N=4$ and $R=4$.}
  \label{fig:bifurcation-energy}
\end{figure}

The above figures suggest that positive increasing
solutions are unique.  Increasing solutions must clearly start with $u(0) \in
\intervaloo{0,1}$.
As an additional evidence supporting uniqueness, we have drawn
on Figure~\ref{fig:time-map} the time maps
$$u_0 \mapsto T_{N,p}(u_0),$$
where $T_{N,p}(u_0)$ is the smaller positive number such that
$u'\bigl( T_{N,p}(u_0) \bigr) = 0$ with $u$ being the solution
to~\eqref{eq:radial-ode} such that $u(0) = u_0$.
These graphs clearly show that $T_{N,p} : \intervaloo{0,1} \to \IR$
is decreasing and so the equation $T_{n,p}(u_0) = R$ has at most a
solution.  Therefore uniqueness holds.

\begin{figure}[htb]
  \centering

  \newcommand{\graph}[1]{%
    \begin{tikzpicture}[x=10ex, y=1.2ex]
      \draw[->] (0,0) -- (1.4, 0) node[below]{$u(0)$};
      \draw[->] (0,0) -- (0, 13) node[left]{$T_{N,#1}$};
      \foreach \x in {0, 1} {%
        \draw (\x, 3pt) -- (\x, -3pt) node[below]{\scriptsize $\x$};
      }
      \foreach \y in {0,2,...,10} {%
        \draw (3pt, \y) -- (-3pt, \y) node[left]{\scriptsize $\y$};
      }
      \node[above] at (0.6, 11) {$p = #1$};
      \draw[thick, color=n2] plot file{\graphpath time-map-N2-p#1.dat}
      node[below, yshift=2pt]{$N = 2$};
      \draw[thick, color=n3] plot file{\graphpath time-map-N3-p#1.dat};
      \draw[thick, color=n4] plot file{\graphpath time-map-N4-p#1.dat}
      node[above]{$N = 4$};
    \end{tikzpicture}
    \ignorespaces}

  \null\hfill
  \graph{3}%
  \hfill
  \graph{4}%
  \hfill
  \graph{5}%
  \hfill\null

  \caption{Time maps for increasing solutions
    for $N \in \{2, 3, 4\}$.}
  \label{fig:time-map}
\end{figure}

\medskip

The preceding considerations naturally lead to some additional
conjectures.

\begin{Conj}\label{Conj:abcd}
  Let $N\ge 2$ and $\Omega = B_R$.
  \begin{enumerate}[(a)]
  \item For any $p > 2 + \lambdarad_2 / \lambda$,
  there exists a \emph{unique} positive
  radial increasing (w.r.t.\ $\abs{x}$) solution to~\eqref{pblP}
  which belongs to the right bifurcation
  branch coming from $p= 2+ \lambdarad_2 / \lambda$.

  \item If $2 + \lambdarad_2 / \lambda \le 2^*$, the least energy radial solutions belong to the
  radial bifurcation branches coming from $p= 2+\lambdarad_2/\lambda$ when $2+ \lambdarad_2 / \lambda < p< 2^*$
  and, moreover, they are decreasing functions of~$\abs{x}$;
  \item \label{Conj:abcd-pointc}
    There exists a turning point $\bar p< 2 + \lambdarad_2 /\lambda$,
    such that the solution is degenerate at $\bar p$ and there exists two
    decreasing radial solutions for $\bar p< p < 2 + \lambdarad_2
    /\lambda$.   Moreover the least energy radial solution becomes
    non constant at some $p_c\in \intervalco{\bar p,
      2 + \lambdarad_2 /\lambda}$.
  \item If $2+\lambdarad_2 / \lambda > 2^*$, any radial positive ground
    state solution to \eqref{pblP}, $p \in \intervaloc{2, 2^*}$,
    is the constant function~$\lambda^{1/(p-2)}$.
  \end{enumerate}
\end{Conj}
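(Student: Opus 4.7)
The plan is to leverage the bifurcation picture of Section~\ref{bifu} together with ODE shooting and time-map techniques, since all relevant objects lie on the continua $B_2^{\pm}$ emanating from $(2+\lambdarad_2/\lambda,\lambda^{1/(p-2)})$. For each fixed $p$, the strategy is to analyze the one-parameter family of Cauchy profiles $r\mapsto u(r;\gamma)$ with $u(0)=\gamma\in\intervaloo{0,+\infty}$, $u'(0)=0$ solving the radial ODE~\eqref{eq:radial-ode}, together with the associated time map $T_p(\gamma):=\inf\{r>0 : u'(r;\gamma)=0\}$.

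For~(a), monotone increasing profiles correspond to $\gamma\in\intervaloo{0,1}$, and the conclusion amounts to showing $T_p'(\gamma)<0$ on $\intervaloo{0,1}$. Differentiating~\eqref{eq:radial-ode} in $\gamma$ yields a linear equation for $v:=\partial_\gamma u$ with $v(0)=1$, $v'(0)=0$, and the implicit function theorem gives $T_p'(\gamma) = -v'(T_p(\gamma))/u''(T_p(\gamma))$. A short energy argument based on~\eqref{eq:Hamiltonian} shows $u(T_p(\gamma))>1$ and hence $u''(T_p(\gamma))<0$, reducing the question to determining the sign of $v'$ at the first critical point. I would try to control it by comparing $v$ against the Pohozaev-type multiplier $Nu+ru'$, which satisfies a closely related inhomogeneous linear equation. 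The main technical obstacle is tracking the sign of $v'$ over the whole interval despite the singular $(N-1)/r$ term; a Sturm--Picone comparison seems the natural tool.

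For~(b) and~(c), the branch $B_2^+$ is a priori bounded for $p$ bounded away from $2^*$ by Proposition~\ref{apriori-bound} and carries decreasing profiles by Theorem~\ref{boundary2}, so by Proposition~\ref{deg} it bends back at some $\bar p\in\intervaloo{2,\,2+\lambdarad_2/\lambda}$, producing the turning point claimed in~(\ref{Conj:abcd-pointc}). Identifying the post-turning segment as the radial ground state requires three ingredients: (i)~uniqueness of decreasing solutions past $\bar p$, via the same time-map argument as in~(a) applied now for $\gamma>1$; (ii)~a Morse-index computation at the turning point showing that $\MIrad$ jumps from $2$ to $1$; and~(iii)~a comparison of $p\mapsto\E_{1,p}(u_p)$ along $B_2^+$ with $p\mapsto\E_{1,p}(\lambda^{1/(p-2)})$, the difference being smooth in $p$ and forced to change sign at some $p_c\in\intervalco{\bar p,\,2+\lambdarad_2/\lambda}$ by the Morse-index jump. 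The most delicate step is ruling out secondary bifurcations on $B_2^+$ and the existence of decreasing radial branches disconnected from~$B_2^+$; a nodal argument in the spirit of Proposition~\ref{unbd} combined with time-map monotonicity for $\gamma>1$ should suffice.

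For~(d), when $2+\lambdarad_2/\lambda > 2^*$ no radial bifurcation from the constant branch occurs in the range $p\in\intervaloc{2,2^*}$, so any hypothetical competitor to the constant would have to arise as an isolated radial solution. I would combine the a priori bound of Corollary~\ref{cor21} for solutions with $\gamma<1$ with a Pohozaev-type identity on the ball to handle solutions with $\gamma>1$ at $p=2^*$, then use Theorem~\ref{1=unique sol} as an anchor at $p$ close to~$2$ and propagate in $p$ by continuity along the constant curve. The critical endpoint $p=2^*$ is the most delicate, where a concentration and blow-up analysis in the spirit of Adimurthi--Yadava~\cite{Adimurthi-Yadava91} will be needed to exclude non-constant radial ground states in the dimensions not covered by Wang~\cite{wang91}.
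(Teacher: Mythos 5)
This statement is labeled \emph{Conjecture}~\ref{Conj:abcd} in the paper precisely because no proof of it is known. The authors support it only with the numerical evidence of Section~\ref{num:1st radial bifurc} (the bifurcation diagrams of Figures~\ref{fig:bifurcation}--\ref{fig:time-map}) and a pointer to the reference~\cite{BGN} for the asymptotic, large-$p$, case of part~(a). There is therefore no proof in the paper against which your attempt can be checked, and the fact that you are unable to close the argument is consistent with the statement's conjectural status.

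Your proposal is a reasonable research plan, but it does not resolve any of the four items, and you acknowledge as much. Some more specific remarks. For~(a), reducing uniqueness to the monotonicity of the time map $T_p$ on $\intervaloo{0,1}$ is exactly what Figure~\ref{fig:time-map} illustrates numerically, but you correctly identify the control of $\sign\bigl(\partial_\gamma u'\bigr)$ in the presence of the singular $(N-1)/r$ term as the main obstacle and offer no way to overcome it. For~(b)--(c), the existence of \emph{a} degenerate radial solution and a local multiplicity near $2+\lambdarad_2$ are indeed established by Proposition~\ref{deg} and Corollary~\ref{multiloc}, but the paper explicitly notes that identifying the turning point with the left endpoint of the multiplicity interval ``requires a deeper analysis that we do not pursue here''; your step~(iii) also has a gap: a radial Morse-index jump from $2$ to $1$ does not by itself force the energy comparison $\E_{1,p}(u_p)<\E_{1,p}(1)$ needed to produce the crossing value $p_c$. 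For~(d), the proposed combination of a Pohozaev-type identity with continuation from small $p$ does not obviously work: for the Neumann problem on a ball, Pohozaev's identity does not yield the non-existence conclusions available in the Dirichlet case (indeed the paper recalls that the critical functional attains its infimum), and the Lin--Ni results quoted in the Introduction show that non-constant radial positive solutions can exist at $p=2^*$ in dimensions $N=4,5,6$, so the exclusion argument at the critical endpoint is genuinely incomplete. In short, you have located the right obstacles, but the statement remains open both in the paper and in your sketch, and the paper never claims otherwise.
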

The non degeneracy and uniqueness of the radial increasing solution is
proved for large $p$ in \cite{BGN} so that (a) holds true at least
asymptotically as $p\to\infty$.  The proof relies on a careful blow up
argument which crucially uses the identification of a limit problem
for $p\to\infty$.

\subsection{Further conjectures and open questions}
\label{num:open-questions}%

As proved previously, all bran\-ches $B_i^-$ in the set of , starting to
the right of $p = 2 + \lambdarad_i(B_R)$, exist for all $p \in
\intervaloo{2 + \lambdarad_i, +\infty}$.  A natural question is what
happens to the radial branch $B_i^+$ starting to the left of $p = 2 +
\lambdarad_i(B_R)$.  Figures~\ref{fig:bifuc-simple}, \ref{fig:bifurcation}
and~\ref{fig:bifurcation-3} picture the numerical computation of
such branches.  They make clear
that when $2 + \lambdarad_i < 2^*$, the branch $B_i^+$ --- which was
shown to exist for all $p \in \intervalco{2+\lambdarad_i - \delta_i,
  2^*}$ for some $\delta_i > 0$ --- blows up when $p \to 2^*$.  The
following conjecture is therefore natural.



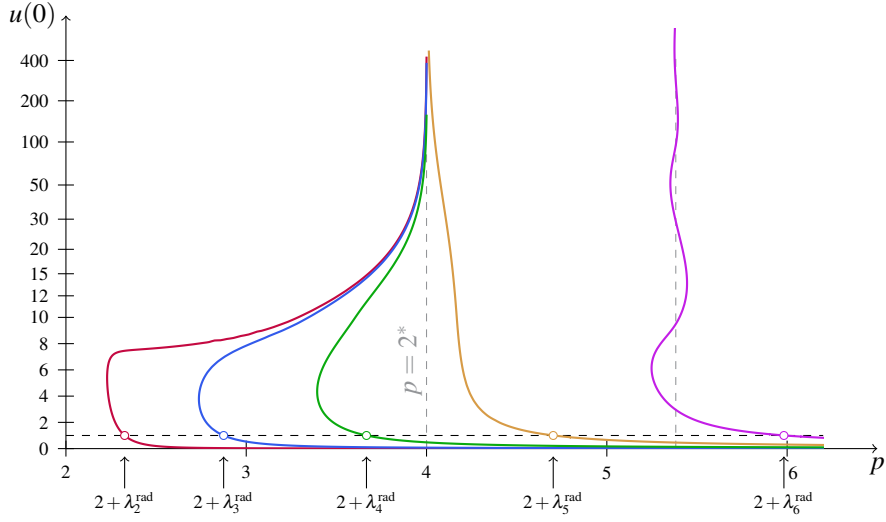
\begin{figure}[htb]
  \centering
  \begin{tikzpicture}[x=15ex,y=1.1ex]
    \draw[->] (2,0) -- (6.5,0) node[below]{$p$};
    \draw[->] (2,0) -- +(0, 33) node[left]{$u(0)$};
    \foreach \p in {2,...,5} {%
      \draw (\p, 3pt) -- (\p, -3pt) node[below]{\scriptsize $\p$};
    }
    \draw (6, 3pt) -- (6, -3pt) node[below, xshift=0.5ex]{\scriptsize $6$};
    \foreach \u in {0,2,...,10} {%
      \draw (2,\u) ++(3pt, 0) -- +(-6pt, 0) node[left]{\scriptsize $\u$};
    }
    \foreach \u/\y in {12/11.64506, 15/13.32699, 20/15.1885,
      30/17.48589, 50/20.09040, 100/23.3692, 200/26.50784, 400/29.5809} {%
      \draw (2,\y) ++(3pt, 0) -- +(-6pt, 0) node[left]{\scriptsize $\u$};
    }
    \draw[dashed, color=umons-gray] (4,0) -- +(0,22.5)
    node[pos=0.3, xshift=-1.4ex, rotate=90] {$p = 2^*$};
    \draw[dashed] (2,1) -- (6.2, 1);
    \draw[dashed, color=umons-gray] (5.38243, 0) -- +(0, 30);
    \begin{scope}
      \clip (2,0) rectangle (6.2, 32);
      \foreach \t in {2+, 2-, 3+, 3-, 4+, 4-, 5+, 5-, 6+, 6-}{%
        \draw[thick, smooth, color=b\t]
        plot file{\graphpath bifurcFEM-N4-R9-\t.dat};
      }
    \end{scope}
    \foreach \i/\l in {2/2.32561, 3/2.87469, 4/3.66692, 5/4.70272,
      6/5.98216}{%
      \draw[fill, color=b\i+, fill=white] (\l,1) circle(1.5pt);
      \draw[<-] (\l, -3pt) -- ++(0, -12pt)
      node[below, yshift=2pt] {\scriptsize $2 + \lambdarad_{\i}$};
    }
  \end{tikzpicture}

  \vspace*{-1ex}
  \caption{Some radial bifurcation branches from~$1$ ($N=4$, $R=9$).
    The vertical axis is linear up to $10$ and then smoothly
    switches to a logarithmic scale.}
  \label{fig:bifurcation-3}
\end{figure}

\begin{Conj}
  Assume that $2 + \lambdarad_i < 2^*$, $i \ge 2$, and
  let $(u_p)_{2+\lambdarad_i < p < 2^*}$ be a family of positive
  radial solutions of type~$i_+$.
  When $p \to 2^*$, $\abs{u_p}_\infty \to +\infty$
  and the solution bifurcates from infinity. In particular, we have 
  \begin{equation*}
  \frac{1}{\abs{u_p}_\infty}
  u_p\bigl( \abs{u_p}_\infty^{1- p/2} x \bigr)
   \to
    \Bigl( \frac{N(N-2)}{N(N-2) + \abs{x}^2} \Bigr)^{(N-2)/2}
  \end{equation*}
  uniformly on compact sets.
\end{Conj}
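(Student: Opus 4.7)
The plan is to perform a standard blow-up analysis around the origin. Since $u_p$ is of type~$i_+$, Theorem~\ref{boundary2} gives that the origin is the global maximum of $u_p$, so $M_p := \norm{u_p}_\infty = u_p(0)$. The first step is to prove $M_p \to +\infty$ as $p \to 2^*$. This would follow by combining three previous results: Proposition~\ref{unbd} asserts that the branch $B_i^+$ is unbounded in $\IR \times \C^{2,\alpha}_{\text{rad}}$; Proposition~\ref{apriori-bound} (applied with any $\bar p < 2^*$), together with elliptic regularity, gives a uniform $\C^{2,\alpha}$ bound on positive solutions whose exponent $p$ is restricted to a compact subinterval of $\intervaloo{2, 2^*}$; and Theorem~\ref{transcritical-radial-bifurc} yields $p > 2+\varepsilon$ on the branch. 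The only way to reconcile unboundedness with these bounds is that the projection of $B_i^+$ onto the $p$-axis accumulates at $2^*$ and $M_p \to +\infty$ as we approach~$2^*$ along the branch.

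Next, I would introduce the rescaling suggested by the statement,
\[
v_p(y) := M_p^{-1}\, u_p\bigl(M_p^{1 - p/2}\, y\bigr),
\qquad y \in \Omega_p := B_{R\, M_p^{(p-2)/2}},
\]
and check by direct computation that $v_p$ is a classical solution of
\[
-\Delta v_p + M_p^{2-p}\, v_p = v_p^{\,p-1} \quad \text{in } \Omega_p,
\]
with homogeneous Neumann boundary conditions on $\partial \Omega_p$, $v_p(0) = 1$, and $\norm{v_p}_\infty = 1$. As $p \to 2^*$ and $M_p \to +\infty$, the scaling factor $M_p^{(p-2)/2}$ tends to $+\infty$, so $\Omega_p$ invades $\IR^N$, while the coefficient $M_p^{2-p}$ tends to~$0$.

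Then I would pass to the limit. Since $0 \le v_p \le 1$ on $\Omega_p$, the right-hand side $v_p^{p-1}$ is uniformly bounded on any fixed compact set of $\IR^N$, and standard interior elliptic regularity yields uniform $\C^{2,\alpha}$ estimates on such sets. A diagonal extraction provides a subsequence $v_{p_n} \to V$ in $\C^2_{\text{loc}}(\IR^N)$, where $V$ is radial, non-negative, satisfies $V(0) = 1 = \norm{V}_\infty$, and solves
\[
-\Delta V = V^{2^*-1} \quad \text{in } \IR^N.
\]
The Caffarelli-Gidas-Spruck classification of positive entire solutions of this critical equation, combined with the condition that $V$ attains its maximum~$1$ at the origin, singles out
\[
V(y) = \Bigl(\frac{N(N-2)}{N(N-2) + \abs{y}^2}\Bigr)^{(N-2)/2}.
\]
Uniqueness of the limit upgrades the subsequential convergence to convergence of the whole family.

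The main obstacle is, in the case $i \ge 3$, to exclude multi-bubble scenarios in which the secondary local extrema of $u_p$ (the critical points away from $0$ produced by Theorem~\ref{boundary2}) either blow up at a rate comparable to $M_p$ or cluster near the origin, spoiling the identification of a single Aubin-Talenti bubble. Controlling the positions of these secondary extrema should be possible by exploiting the monotonicity of the Hamiltonian~\eqref{eq:h}, which forces the oscillation amplitude of $u_p$ around~$1$ to decrease with~$r$, but a fully rigorous proof of the required uniform separation appears delicate and is precisely the step preventing the present statement from being upgraded to a theorem.
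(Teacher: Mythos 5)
This statement is labelled as a \emph{Conjecture} in the paper, and the authors explicitly do not prove it: they present it as motivated by the numerical evidence of Figure~\ref{fig:bifurcation-3} and remark just afterwards that the expected behavior is a bubble tower at an interior point, as in the Dirichlet case. So there is no proof in the paper against which your argument can be checked.

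That said, your blow-up outline is the natural route, and the rescaling computation and invocation of the Caffarelli--Gidas--Spruck classification are correct as far as they go. Two caveats are worth stating more carefully than your draft does. First, even the preliminary claim $M_p\to\infty$ is not as automatic as you suggest: Proposition~\ref{unbd} gives unboundedness of the branch in $\IR\times\C^{2,\alpha}_{\text{rad}}$ and Proposition~\ref{apriori-bound} gives bounds on compact subintervals of $\intervaloo{2,2^*}$, but one must also rule out that the branch escapes the strip $p<2^*$ entirely (the a priori bounds fail for $p\ge 2^*$, so nothing in Sections~\ref{aprioriesti}--\ref{bifu} forbids this a priori). Moreover, the conjecture speaks of an arbitrary family of type-$i_+$ solutions, not only the bifurcation branch $B_i^+$; without a classification result analogous to Theorem~\ref{Thm:classCi-} for solutions with $u(0)>1$, identifying such a family with the branch is itself an assumption. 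Second, and more substantially, you correctly flag that the step excluding multi-bubble phenomena and controlling the secondary critical points is open; this is precisely why the paper leaves the statement as a conjecture rather than a theorem, and indeed the authors expect a bubble tower for $i\ge 3$, for which the stated convergence would only capture the outermost bubble. Your concluding sentence accurately diagnoses the status of the problem, and no further verdict against a ``paper proof'' is possible here.
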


The fact that all branches $B^+_i$ starting from
$2 + \lambdarad_i < 2^*$ blow up (as indicated by
Figure~\ref{fig:bifurcation-3}) also implies that, if
$2 + \lambdarad_{n+1} < 2^*$, then Problem~\eqref{pblP} possesses $n$
positive solutions (distinguished by the nodal properties of $u-1$)
whenever $p$ is slightly subcritical.  These solution concentrate
at~$0$ when $p \xrightarrow{<} 2^*$.
The existence of \emph{one} concentrating solution for slightly
subcritical problems was established by O.~Rey and J.~Wei.  In dimension
three~\cite{reywei1}, it concentrates at an interior point
while, in dimensions four or greater and for
non-convex domains~$\Omega$~\cite{reywei2}, it concentrates on the
least curved part of the boundary~$\partial\Omega$.
What Figure~\ref{fig:bifurcation-3} suggests is that there should
actually exist a bubble tower at an interior point (as was shown for
the Dirichlet case~\cite{MR1998635})


The behavior of the branch $B_i^+$ is more tricky when $2 +
\lambdarad_i > 2^*$.  We will first focus on the case $i=2$.
The shape of the branch depends how small the radius $R$ is but also
on the dimension~$N$.
On Figure~\ref{fig:bifurcation-1st}, the thick line is the branch
bifurcating from $(2 + \lambdarad_2, 1)$ and the thin line is another
continuum of positive radially decreasing solutions of type~$2_+$.
These graphs suggest that, when $N \in \{ 4,5,6 \}$, no matter how
large $2 +\lambdarad_2 > 2^*$ is, the left branch starting from $(2 +
\lambdarad_2, 1)$ always goes below~$2^*$, then makes a U-turn and
blows up in~$L^\infty$ as $p \to 2^*$. Thus this branch always
crosses $p = 2^*$, which implies the existence of a radially
decreasing solution for the critical exponent in accordance with the result of Adimurthi
\& S.\ L.\ Yadava~\cite{Adimurthi-Yadava91}.
In this case, for $p \le 2^*$, the behavior of the energy along the
branch behaves as depicted in
Figures~\ref{fig:bifurcation}--\ref{fig:bifurcation-energy}:
after the turning point, the radial Morse index changes from
$2$ to $1$ and, for $p$ close enough to $2^*$,
the branch has lower energy than~$1$.
These numerical computations thus suggest that, in this case, the
solution~$1$ stops being the radial ground state before $2^*$
and this is not due to a sub-critical bifurcation from~$1$ but most likely
to a bifurcation from infinity (this is part of the
assertion~\eqref{Conj:abcd-pointc} in Conjecture \ref{Conj:abcd}).
For $N =3$ or $N \ge 7$, Figure~\ref{fig:bifurcation-1st} shows
that when $R$ becomes small, the branch emanating from $(2 +
\lambdarad_2, 1)$ does not cross $p=2^*$.  On the graphs, there is
another branch of positive solutions of type~$2_+$ coming from infinity
but this branch must disappear for smaller $R$ because
positive solutions are necessarily constant for a sufficiently
small radius~\cite{Adimurthi-Yadava91, Adimurthi-Yadava97}.

Figure~\ref{fig:bifurcation-3} also indicates is that, for $R$ large
enough, the branch $B_i^+$ emanating from the first $2 + \lambdarad_i$
greater than $2^*$ is asymptotic to $2^*$.  Along that branch, the
solutions concentrate at the origin as $p \to 2^*$.  Proving that this
behavior indeed takes place would be a nice complement to results
showing the existence of solutions concentrating on the boundary of
the domain as
$p \xrightarrow{>} 2^*$~\cite{delpinomussopistoia,MR3262455}.
In addition (as again illustrated by
Figure~\ref{fig:bifurcation-1st}), notice that the branches
bifurcating from higher $2 + \lambdarad_i$ oscillate around
some blow up value $p$.  This behavior was proved by
Miyamoto~\cite{Miyamoto} for~\eqref{pblP} but when looking to the
bifurcation diagram w.r.t.\ the parameter $\lambda$.  It would be
interesting to perform a similar analysis w.r.t.\ the parameter $p$
and analyze the curves in the $(\lambda, p)$-plane for which
entire singular solutions exist (which give the values of the
asymptotes of the branches of solutions).

\begin{figure}[htb]
  \centering
  
  \newcommand{\graph}[4]{
    \begin{tikzpicture}[x=4.ex, y=1pt]
      \draw[->] (2,0) -- (7.5,0);
      \node[below] at (7.3, 0) {\scriptsize \rlap{\hspace{1pt}$p$}};
      \draw[->] (2,0) -- (2, 81) node[left]{\scriptsize \llap{$u(0)$}};
      \foreach \x in {2,...,7}{%
        \draw (\x, 3pt) -- (\x, -3pt) node[below]{\scriptsize $\x$};
      }
      \foreach \y in {0,10,...,70}{%
        \draw (2,\y) +(3pt, 0) -- +(-3pt, 0) node[left]{\scriptsize $\y$};
      }
      \draw[dashed] (#3, 0) -- +(0, 75);
      \begin{scope}[color=umons-red]
        \clip (2,-3pt) rectangle (7.3, 78);
        \draw[thick] plot file{\graphpath bifurcation-N#1-R#2-low.dat};
        \draw plot file{\graphpath bifurcation-N#1-R#2-up.dat};
        \draw[fill=white] (#4, 1) circle(1.5pt);
      \end{scope}
      \node[above] at (4.5, 75) {$N = #1,\ R = #2$};
      \node[below] at (4.5, -2.5ex) {$2+\lambda_2 \approx #4$};
    \end{tikzpicture}%
  }

  \graph{4}{3}{4}{4.93}
  \hfill
  \graph{4}{2}{4}{8.59}
  \hfill
  \graph{4}{1.67}{4}{12.30}

  \vspace{0.5ex}

  \graph{7}{5}{2.8}{3.95}
  \hfill
  \graph{7}{4.9}{2.8}{4.03}
  \hfill
  \graph{7}{4.8}{2.8}{4.12}

  \caption{Branch emanating from $2 + \lambdarad_2 > 2^*$.}
  \label{fig:bifurcation-1st}
\end{figure}

\subsection{Evidence of concentration for the
  singular perturbation
  problem~\texorpdfstring{\eqref{pblE}}{(Pε)}}
\label{sec:concentration-singular-perturb}

In this section, we compute solutions to problem~\eqref{pblE} when
$\epsilon$ is small.
The bifurcation diagram for $N=3$, $R=4$, $f(u) = \abs{u} \, u$
is drawn in Figure~\ref{fig:bifurcation-eps}.
Note that, for this $f$, the values of $\epsilon$ where bifurcation
occurs (see~\eqref{eq:epsiloni}) are $\epsilon_2 \approx 0.7924$,
$\epsilon_3 \approx 0.2681$, $\epsilon_4 \approx 0.1346$,
$\epsilon_5 \approx 0.0809$, $\epsilon_6 \approx 0.0540$,
$\epsilon_7 \approx 0.0386$,...
Figure~\ref{fig:rad:epsilon->0} displays
solutions for $\epsilon = 0.05$ on the branches $C^\pm_i$,
$i=2,\dotsc, 6$, and shows that the ``bumps'' cluster
around the boundary.
Further evidence that the oscillations of the radial solutions of type
$i_+$ ($i \ge 3$) and $i_-$ ($i \ge 2$) accumulate near the boundary
as $\epsilon \to 0$ is given by
Fig.~\ref{fig:bifurcation-eps-profile} where solutions on the branches
$C^\pm_4$ are drawn.
As a consequence, the bifurcating branches from $(\epsilon_i,1)$
provide an easy
way to construct clustered layer solutions~\cite{Malchiodi-Ni-Wei05}.
Since the cubic nonlinearity is subcritical in dimension $3$, the
solutions of type $i_+$ develop a (bounded) peak at the origin as
$\varepsilon\to 0$, the profile being asymptotically that of the rescaled ground state
solution in $\IR^3$.

\begin{figure}[htb]
  \centering

  \begin{tikzpicture}[x=60ex,y=5ex]
    \draw[->] (0,0) -- (1.1,0) node[below]{$\epsilon$};
    \draw[->] (0,0) -- +(0, 4.5) node[left]{$u(0)$};
    \foreach \x in {0, 0.1, 0.2, 0.3, 0.4, 0.5, 0.6, 0.7, 0.8, 0.9, 1}{
      \draw (\x, 3pt) -- (\x, -3pt) node[below]{\scriptsize $\x$};
    }
    \foreach \y in {1, 2, 3, 4} {
      \draw (3pt, \y) -- (-3pt, \y) node[left]{\scriptsize $\y$};
    }
    \draw[dashed] (0,1) -- (1.1, 1);
    \foreach \t in {2+, 2-, 3+, 3-, 4+, 4-, 5+, 5-, 6+, 6-, 7+, 7-}{
      \draw[thick, color=b\t]
      plot file {\graphpath bifurcFEM-eps-N3-R4-\t.dat};
    }
    \foreach \i/\e in {2/0.792443, 3/0.268099, 4/0.134567,
      5/0.0808662, 6/0.053953, 7/0.0385551}{
      \draw[fill, color=b\i+, fill=white] (\e, 1) circle(1.5pt);
      \ifthenelse{\i < 5}{
        \node[below right, color=b\i+, xshift=-3pt] at (\e, 1) {%
          \scriptsize $(\epsilon_\i,1)$};
      }{}
    }
  \end{tikzpicture}

  \vspace*{-1ex}
  \caption{Some radial bifurcation branches emanating 
    from~$(\epsilon_i, 1)$ ($N=3$, $p=3$, $R=4$).}
  \label{fig:bifurcation-eps}
\end{figure}
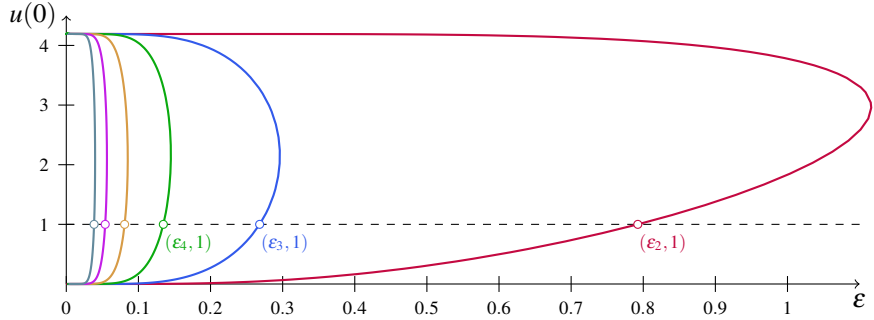

\begin{figure}[!hbt]
  \newcommand{\R}{4}
  \null\hfill

  \begin{tikzpicture}[x=6ex,y=3.8ex]
    \newcommand{\ymin}{0}%
    \newcommand{\ymax}{4.9}%
    \draw[->] (0,\ymin) -- ++(\R + 0.5, 0)
    node[below right, xshift=-2ex]{$r = \abs{x}$};
    \draw[->] (0,\ymin) -- (0, \ymax) node[left]{$u$};
    \foreach \x in {0,...,\R}{%
      \draw (\x,\ymin) ++(0,3pt) -- ++(0,-6pt)
      node[below]{\scriptsize $\x$};
    }
    \foreach \y in {1,...,4}{%
      \draw (0, \y) ++(3pt,0) -- ++(-6pt,0) node[left]{\scriptsize $\y$};
    }
    \draw[dashed] (0,1) -- (\R,1);

    \draw[line width = 1.5pt, color=b2+] plot
    file{\graphpath radial-N3-R4-p3-eps0.05-2+.dat};
    \draw[<-, color=b2+!60, line width=0.8pt]
    (3.75, 0.1) .. controls +(0.3, 0.5) and (4, 0.4)
    .. (4.2, 0.5) node[right, xshift=-3pt, color=b2+] {$2_+$};
    \draw[thick, color=b3+] plot
    file{\graphpath radial-N3-R4-p3-eps0.05-u0-4.19161.dat};
    \node[color=b3+, above] at (3.9, 1.25) {$3_+$};
    \draw[thick, color=b4+] plot
    file{\graphpath radial-N3-R4-p3-eps0.05-u0-4.19041.dat};
    \node[color=b4+, above] at (3.2, 1.25) {$4_+$};
    \draw[thick, color=b5+] plot
    file{\graphpath radial-N3-R4-p3-eps0.05-u0-4.16521.dat};
    \node[color=b5+, above] at (2.5, 1.2) {$5_+$};
    \draw[thick, color=b6+] plot
    file{\graphpath radial-N3-R4-p3-eps0.05-u0-3.71811.dat};
    \node[color=b6+, above] at (1.8, 1.2) {$6_+$};
  \end{tikzpicture}
  %
  %
  \hfill
  \begin{tikzpicture}[x=6ex,y=7.6ex]
    \newcommand{\ymax}{2.45}%
    \newcommand{\ymin}{0.}%
    \draw[->] (0,\ymin) -- ++(\R + 0.5, 0)
    node[below right, xshift=-2ex]{$r = \abs{x}$};
    \draw[->] (0,\ymin) -- (0, \ymax) node[left]{$u$};
    \foreach \x in {0,...,\R}{%
      \draw (\x,\ymin) ++(0,3pt) -- ++(0,-6pt)
      node[below]{\scriptsize $\x$};
    }
    \foreach \y in {0.5, 1, 1.5, 2}{%
      \draw (0, \y) ++(3pt,0) -- ++(-6pt,0) node[left]{\scriptsize $\y$};
    }
    \draw[dashed] (0,1) -- (\R,1);

    \draw[thick, color=b2-] plot
    file{\graphpath radial-N3-R4-p3-eps0.05-u0-2.89993e-06.dat};
    \node[color=b2+, above] at (4, 1.3) {$2_-$};
    \draw[thick, color=b3-] plot
    file{\graphpath radial-N3-R4-p3-eps0.05-u0-7.81442e-05.dat};
    \node[color=b3-, above] at (3.2, 1.3) {$3_-$};
    \draw[thick, color=b4-] plot
    file{\graphpath radial-N3-R4-p3-eps0.05-u0-0.00167092.dat};
    \node[color=b4-, above] at (2.5, 1.3) {$4_-$};
    \draw[thick, color=b5-] plot
    file{\graphpath radial-N3-R4-p3-eps0.05-u0-0.0268115.dat};
    \node[color=b5-, above] at (1.8, 1.2) {$5_-$};
    \draw[thick, color=b6-] plot
    file{\graphpath radial-N3-R4-p3-eps0.05-u0-0.481319.dat};
    \node[color=b6-, above] at (1.1, 1.1) {$6_-$};
  \end{tikzpicture}
  \hfill\null

  \vspace{-2ex}
  \caption{Non constant radial solutions
    ($N = 3$, $p = 3$, $R = 4$, 
    $\epsilon = 0.05$).
  }
  \label{fig:rad:epsilon->0}
\end{figure}
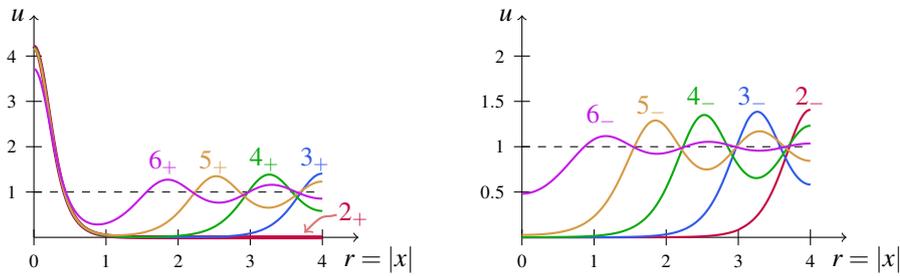

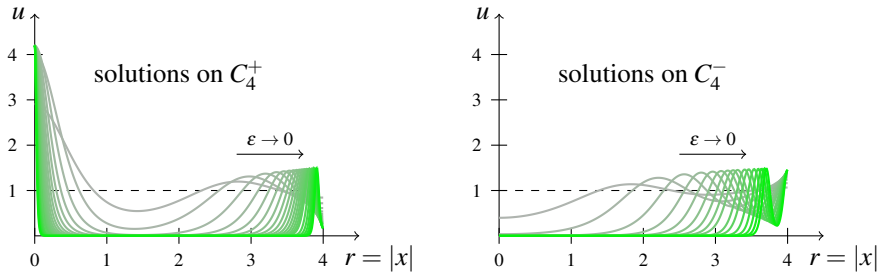
\begin{figure}[!hbt]
  \newcommand{\R}{4}
  \newcommand{\branch}{4}
  \newcommand{\ymin}{0}%
  \newcommand{\ymax}{4.9}%
  \null\hfill
  \begin{tikzpicture}[x=6ex,y=3.8ex]
    \draw[->] (0,\ymin) -- ++(\R + 0.5, 0)
    node[below right, xshift=-2ex]{$r = \abs{x}$};
    \draw[->] (0,\ymin) -- (0, \ymax) node[left]{$u$};
    \foreach \x in {0,...,\R}{%
      \draw (\x,\ymin) ++(0,3pt) -- ++(0,-6pt)
      node[below]{\scriptsize $\x$};
    }
    \foreach \y in {1,...,4}{%
      \draw (0, \y) ++(3pt,0) -- ++(-6pt,0) node[left]{\scriptsize $\y$};
    }
    \draw[dashed] (0,1) -- (\R,1);
    \node[above] at (2, 3) {solutions on $C^+_{\branch}$};

    \foreach \j in {10,25,...,240, 240,280,...,500}{
      \pgfmathtruncatemacro{\k}{\j / 2}%
      \draw[thick, color=b\branch+!\k!gray!60] plot
      file{\graphpath bifurcFEM-eps-N3-R4-p3-\branch+-\j.dat};
    }
    \draw[->] (2.8, 1.8) -- +(2.5em, 0) node[pos=0.5, above]{%
      \scriptsize $\epsilon \to 0$};
  \end{tikzpicture}
  \hfill
  \begin{tikzpicture}[x=6ex,y=3.8ex]
    \draw[->] (0,\ymin) -- ++(\R + 0.5, 0)
    node[below right, xshift=-2ex]{$r = \abs{x}$};
    \draw[->] (0,\ymin) -- (0, \ymax) node[left]{$u$};
    \foreach \x in {0,...,\R}{%
      \draw (\x,\ymin) ++(0,3pt) -- ++(0,-6pt)
      node[below]{\scriptsize $\x$};
    }
    \foreach \y in {1,...,4}{%
      \draw (0, \y) ++(3pt,0) -- ++(-6pt,0) node[left]{\scriptsize $\y$};
    }
    \draw[dashed] (0,1) -- (\R,1);
    \node[above] at (2, 3) {solutions on $C^-_{\branch}$};

    \foreach \j in {10,30,...,210, 240,290,...,500}{
      \pgfmathtruncatemacro{\k}{\j / 2}%
      \draw[thick, color=b\branch-!\k!gray!60] plot
      file{\graphpath bifurcFEM-eps-N3-R4-p3-\branch--\j.dat};
    }
    \draw[->] (2.5, 1.8) -- +(2.5em, 0) node[pos=0.5, above]{%
      \scriptsize $\epsilon \to 0$};
  \end{tikzpicture}
  \hfill\null

  \caption{Profile of the solutions along the branch emanating
    from~$\epsilon_{\branch}$  ($N=3$, $p=3$, $R=4$).}
  \label{fig:bifurcation-eps-profile}
\end{figure}

We now examine more complex nonlinearities~$f$ which possess
fixpoints in the interval~$\intervaloo{0,1}$ i.e.,
Problem~(\ref{pblE}) possesses more constant solutions than $0$
and~$1$.  Such fix point will generate an additional homoclinic
(asymptotic to this point) for the conservative limit equation
$-v'' + v = f(v)$ which will trap the continuum emanating from~$1$
preventing it from reaching the homoclinic asymptotic to~$0$
as for the pure power nonlinearity.

More specifically, we consider a function $f_1$ such that
$u \mapsto F_1(u) - u^2/2$ (where $F_1(u) = \int_0^u f_1$) possesses a
single (necessarily degenerate) critical point in~$\intervaloo{0,1}$
and $f_2$ such that $u \mapsto F_1(u) - u^2/2$ has a local minimum
$u^*_1$ and a local maximum $u^*_2$ in~$\intervaloo{0,1}$.  These
functions are pictured on Fig.~\ref{fig:non-linearities}.

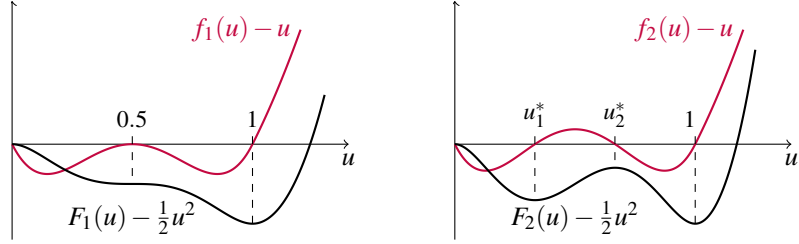
\begin{figure}[htb]
  \centering
  \begin{tikzpicture}[x=20ex, y=40ex]
    \draw[->] (0,0) -- (1.4, 0) node[below]{$u$};
    \draw[->] (0, -0.2) -- (0, 0.3);
    \draw[thick, color=umons-red] plot file{\graphpath f1.dat}
    node[left]{$f_1(u) - u$};
    \draw[thick] plot file{\graphpath F1.dat};
    \node[below] at (0.5, -0.1) {$F_1(u) - \tfrac{1}{2}u^2$};
    \draw[dashed] (1,0) ++(0, 3pt) node[above]{\small $1$}
    -- (1, -0.166);
    \draw[dashed] (0.5,0) ++(0, 3pt) node[above]{\small $0.5$}
    -- (0.5, -0.083);
  \end{tikzpicture}
  \hfil
  \begin{tikzpicture}[x=20ex, y=40ex]
    \draw[->] (0,0) -- (1.4, 0) node[below]{$u$};
    \draw[->] (0, -0.2) -- (0, 0.3);
    \draw[thick, color=umons-red] plot file{\graphpath f2.dat}
    node[left]{$f_2(u) - u$};
    \draw[thick] plot file{\graphpath F2.dat};
    \node[below] at (0.5, -0.1) {$F_2(u) - \tfrac{1}{2}u^2$};
    \draw[dashed] (1,0) +(0, 3pt) node[above, xshift=-2pt]{\small $1$}
    -- (1, -0.166);
    \draw[dashed] (0.333, 0) +(0,3pt) node[above]{\small $u^*_1$}
    -- +(0, -0.117);
    \draw[dashed] (0.666, 0) +(0,3pt) node[above]{\small $u^*_2$}
    -- +(0, -0.049);
  \end{tikzpicture}

  \caption{Nonlinearities $f_i$, $i=1,2$.}
  \label{fig:non-linearities}
\end{figure}

The nonlinearities $f_1$ and $f_2$ are chosen in such a way that the
bifurcations from~$1$ occur for the same $\epsilon_i$, $i \ge 2$, as
for the above pure power ($p=3$).

For both nonlinearities,
Figs.~\ref{fig:f1:rad:epsilon->0}--\ref{fig:f2:rad:epsilon->0} show
that the solutions bifurcating from~$1$ behave similarly to those of
the pure power case except that their bumps resemble to the homoclinic
starting from~$0.5$ or $u^*_2$.  Note that, for $f=f_1$, the speed of
concentration of the bumps is likely to be of order $\epsilon^\alpha$
for some $\alpha > 0$ due to the degeneracy of the critical point
$0.5$ which implies that the associated homoclinic decays like a power.

For $f=f_1$, there are additional solutions $u$ with
$u(0) \in \intervaloo{0, 0.5}$.  These solutions seem to come in
pairs: for $\epsilon > 0$ small enough, there are two solutions of
type $i_-$, one starting close to the homoclinic asymtotic to~$0$ and
another one increasing to $0.5$ and then resembling the homoclinic
asymtotic to $0.5$ before oscillating around~$1$
(see the right graph of Fig.~\ref{fig:f1:rad:epsilon=0.02}).

\begin{figure}[!hbt]
  \newcommand{\R}{4}
  \newcommand{\ymax}{1.45}%
  \newcommand{\ymin}{0.}%
  \newcommand{\solution}[3][]{%
    \draw[#1] plot file{\graphpath radial-N3-R4-f4-eps#2-u0-#3.dat};}%
  \null\hfill
  \begin{tikzpicture}[x=6ex,y=12ex]
    \node[above] at (2, 1.4) {$\epsilon = 0.25$};
    \draw[->] (0,\ymin) -- ++(\R + 0.5, 0)
    node[below right, xshift=-2ex]{$r = \abs{x}$};
    \draw[->] (0,\ymin) -- (0, \ymax) node[left]{$u$};
    \foreach \x in {0,...,\R}{%
      \draw (\x,\ymin) ++(0,3pt) -- ++(0,-6pt)
      node[below]{\scriptsize $\x$};
    }
    \foreach \y in {0.5, 1}{%
      \draw (0, \y) ++(3pt,0) -- ++(-6pt,0) node[left]{\scriptsize $\y$};
    }
    \draw[dashed] (0,1) -- +(\R,0);
    \draw[dashed] (0, 0.5) -- +(\R,0);

    \solution[thick, color=b2-]{0.25}{0.679194}
    \node[color=b2-, below] at (2.9, 0.9) {$2_-$};
    \solution[thick, color=b3-]{0.25}{0.949853}
    \node[color=b3-, above] at (2, 1.) {$3_-$};
  \end{tikzpicture}
  \hfill
  \begin{tikzpicture}[x=6ex,y=12ex]
    \node[above] at (2, 1.4) {$\epsilon = 0.05$};
    \draw[->] (0,\ymin) -- ++(\R + 0.5, 0)
    node[below right, xshift=-2ex]{$r = \abs{x}$};
    \draw[->] (0,\ymin) -- (0, \ymax) node[left]{$u$};
    \foreach \x in {0,...,\R}{%
      \draw (\x,\ymin) ++(0,3pt) -- ++(0,-6pt)
      node[below]{\scriptsize $\x$};
    }
    \foreach \y in {0.5, 1}{%
      \draw (0, \y) ++(3pt,0) -- ++(-6pt,0) node[left]{\scriptsize $\y$};
    }
    \draw[dashed] (0, 1) -- +(\R,0);
    \draw[dashed] (0, 0.5) -- +(\R,0);

    \solution[color=b2-]{0.05}{0.000243914}
    \solution[color=b2-]{0.05}{0.115623}
    \solution[thick, color=b2-]{0.05}{0.541756}
    \node[color=b2-, above] at (3.4, 0.45) {$2_-$};
    \solution[thick, color=b3-]{0.05}{0.563124}
    \node[color=b3-, above] at (3.2, 1.1) {$3_-$};
    \solution[thick, color=b4-]{0.05}{0.603638}
    \node[color=b4-, above] at (2.5, 1.1) {$4_-$};
    \solution[thick, color=b5-]{0.05}{0.683576}
    \node[color=b5-, above] at (1.8, 1.1) {$5_-$};
    \solution[thick, color=b6-]{0.05}{0.902619}
    \node[color=b6-, above] at (1.1, 1.) {$6_-$};
  \end{tikzpicture}
  \hfill\null

  \vspace{-2ex}
  \caption{Non constant radial solutions
    ($N = 3$, $f = f_1$, $R = 4$, $\epsilon \to 0$).}
  \label{fig:f1:rad:epsilon->0}
\end{figure}

\begin{figure}[!hbt]
  \newcommand{\R}{4}
  \newcommand{\ymax}{1.45}%
  \newcommand{\ymin}{0.}%
  \newcommand{\solution}[2][]{%
    \draw[#1] plot file{\graphpath radial-N3-R4-f4-eps0.02-u0-#2.dat};}%
  \null\hfill
  \begin{tikzpicture}[x=6ex,y=12ex]
    \node[above] at (2, 1.4) {$\epsilon = 0.02$};
    \draw[->] (0,\ymin) -- ++(\R + 0.5, 0)
    node[below right, xshift=-2ex]{$r = \abs{x}$};
    \draw[->] (0,\ymin) -- (0, \ymax) node[left]{$u$};
    \foreach \x in {0,...,\R}{%
      \draw (\x,\ymin) ++(0,3pt) -- ++(0,-6pt)
      node[below]{\scriptsize $\x$};
    }
    \foreach \y in {0.5, 1}{%
      \draw (0, \y) ++(3pt,0) -- ++(-6pt,0) node[left]{\scriptsize $\y$};
    }
    \draw[dashed] (0,1) -- +(\R,0);
    \draw[dashed] (0, 0.5) -- +(\R,0);

    \solution[thick, color=b2-]{0.517672}
    \solution[thick, color=b3-]{0.523444}
    \solution[thick, color=b4-]{0.532241}
    \solution[thick, color=b5-]{0.544522}
    \solution[thick, color=b6-]{0.56452}
    \solution[thick, color=b7-]{0.597884}
    \solution[thick, color=b8-]{0.662927}
    \solution[thick, color=b9-]{0.81565}
    \node[color=b2-, right] at (3.45, 0.7) {\scriptsize $2_-$};
    \foreach \n/\r in {3/3.5, 4/2.9, 5/2.5, 6/2.05, 7/1.6}{%
      \node[color=b\n-, above] at (\r, 1.1) {\scriptsize $\n_-$};
    }
    \node[color=b8-, above] at (1.2, 1.05) {\scriptsize $8_-$};
    \node[color=b9-, above] at (0.8, 1) {\scriptsize $9_-$};
  \end{tikzpicture}
  \hfill
  \begin{tikzpicture}[x=6ex,y=12ex]
    \node[above] at (2, 1.4) {$\epsilon = 0.02$};
    \draw[->] (0,\ymin) -- ++(\R + 0.5, 0)
    node[below right, xshift=-2ex]{$r = \abs{x}$};
    \draw[->] (0,\ymin) -- (0, \ymax) node[left]{$u$};
    \foreach \x in {0,...,\R}{%
      \draw (\x,\ymin) ++(0,3pt) -- ++(0,-6pt)
      node[below]{\scriptsize $\x$};
    }
    \foreach \y in {0.5, 1}{%
      \draw (0, \y) ++(3pt,0) -- ++(-6pt,0) node[left]{\scriptsize $\y$};
    }
    \draw[dashed] (0, 1) -- +(\R,0);
    \draw[dashed] (0, 0.5) -- +(\R,0);

    \solution[thick, color=b2-]{1.1e-08}
    \solution[thick, color=b3-]{4.33549e-07}
    \solution[thick, color=b4-]{2.3981e-05}
    \solution[thick, color=b5-]{0.000813239}
    \solution[thick, color=b5-]{0.093006}
    \solution[thick, color=b4-]{0.195051}
    \solution[thick, color=b3-]{0.274162}
    \solution[thick, color=b2-]{0.328249}
    \node[color=b2-, right] at (3.35, 0.6) {$2_-$};
    \node[color=b3-, above] at (3.4, 1.1) {$3_-$};
    \node[color=b4-, above] at (2.9, 1.1) {$4_-$};
    \node[color=b5-, above] at (2.4, 1.1) {$5_-$};
  \end{tikzpicture}
  \hfill\null

  \vspace{-2ex}
  \caption{Non constant radial solutions
    ($N = 3$, $f = f_1$, $R = 4$, $\epsilon = 0.02$).}
  \label{fig:f1:rad:epsilon=0.02}
\end{figure}

For $f=f_2$, the additional numerically computed solutions oscillate
around the second local minimum $u^*_1$ of
$u \mapsto F_2(u) - \frac{1}{2}u^2$ (see
Fig.~\ref{fig:f2:rad:epsilon->0}).  For these solutions, the
classification in types $i_\pm$ has to be adapted to count the number
of zeros of $u - u^*_1$ with the subscript $\pm$ being the sign of
$u(0) - u^*_1$.  Assuming that $f'(u^*_1) > 1$ (i.e., that the minimum
$u^*_1$ is non-degenerate) and following similar arguments to those
developed above, one can prove a multiplicity result such as
Corollary~\ref{multiplicity:espilon->0}.  In this case however, both
solutions of type $i_-$ and $i_+$ will keep existing no matter how
small $\epsilon > 0$ is,
so, when $\epsilon < (f'(u^*_1) - 1) / \lambdarad_{n+1}$, one will
actually have at least $2n$ solutions to Problem~(\ref{pblE}), one for
each type $i_\pm$, $2 \le i \le n+1$.

\begin{figure}[!hbt]
  \newcommand{\R}{4}
  \newcommand{\ymax}{1.45}%
  \newcommand{\ymin}{0.}%
  \newcommand{\solution}[3][]{%
    \draw[#1] plot file{\graphpath radial-N3-R4-f4.5-eps#2-u0-#3.dat};}%
  \null\hfill
  \begin{tikzpicture}[x=6ex,y=12ex]
    \node[above] at (2, 1.4) {$\epsilon = 0.25$};
    \draw[->] (0,\ymin) -- ++(\R + 0.5, 0)
    node[below right, xshift=-2ex]{$r = \abs{x}$};
    \draw[->] (0,\ymin) -- (0, \ymax) node[left]{$u$};
    \foreach \x in {0,...,\R}{%
      \draw (\x,\ymin) ++(0,3pt) -- ++(0,-6pt)
      node[below]{\scriptsize $\x$};
    }
    \foreach \y in {0.5, 1}{%
      \draw (0, \y) ++(3pt,0) -- ++(-6pt,0) node[left]{\scriptsize $\y$};
    }
    \draw[dashed] (0,1) -- +(\R,0);
    \draw[dashed] (0, 0.333333) -- +(\R,0) node[right]{\small $u^*_1$};
    \draw[dashed] (0, 0.666667) -- +(\R,0) node[right]{\small $u^*_2$};

    \solution[thick, color=b2+]{0.25}{0.113111}
    \node[color=b2+, above] at (1.6, 0.34) {$2_+$};
    \solution[thick, color=b2-]{0.25}{0.399173}
    \node[color=b2-, below] at (2.4, 0.3) {$2_-$};
    \solution[thick, color=b2+]{0.25}{0.727308}
    \node[color=b2+, below] at (2.9, 0.9) {$2_-$};
    \solution[thick, color=b3+]{0.25}{0.954806}
    \node[color=b3+, above] at (2, 1.) {$3_-$};
  \end{tikzpicture}
  \hfill
  \begin{tikzpicture}[x=6ex,y=12ex]
    \node[above] at (2, 1.4) {$\epsilon = 0.05$};
    \draw[->] (0,\ymin) -- ++(\R + 0.5, 0)
    node[below right, xshift=-2ex]{$r = \abs{x}$};
    \draw[->] (0,\ymin) -- (0, \ymax) node[left]{$u$};
    \foreach \x in {0,...,\R}{%
      \draw (\x,\ymin) ++(0,3pt) -- ++(0,-6pt)
      node[below]{\scriptsize $\x$};
    }
    \foreach \y in {0.5, 1}{%
      \draw (0, \y) ++(3pt,0) -- ++(-6pt,0) node[left]{\scriptsize $\y$};
    }
    \draw[dashed] (0, 1) -- +(\R,0);
    \draw[dashed] (0, 0.333333) -- +(\R,0) node[right]{\small $u^*_1$};
    \draw[dashed] (0, 0.666667) -- +(\R,0) node[right]{\small $u^*_2$};

    \solution[thick, color=b2-]{0.05}{1.59751e-05}
    \node[color=b2-, above, yshift=-1pt] at (2.3, 0.) {$2_-$};
    \solution[thick, color=b3-]{0.05}{0.00276109}
    \node[color=b3-, above] at (1, 0.) {$3_-$};
    \solution[thick, color=b3+]{0.05}{0.643381}
    \node[color=b3+, below] at (1, 0.63) {$3_+$};
    \solution[thick, color=b2+]{0.05}{0.665335}
    \node[color=b2+, below] at (4, 0.25) {$2_+$};
    \solution[thick, color=b2-]{0.05}{0.667147}
    \node[color=b2-, above] at (4., 1.1) {$2_-$};
    \solution[thick, color=b3-]{0.05}{0.669612}
    \node[color=b3-, above] at (3.2, 1.1) {$3_-$};
    \solution[thick, color=b4-]{0.05}{0.682263}
    \node[color=b4-, above] at (2.5, 1.05) {$4_-$};
    \solution[thick, color=b5-]{0.05}{0.730164}
    \node[color=b5-, above] at (1.8, 1.) {$5_-$};
    \solution[thick, color=b6-]{0.05}{0.912657}
    \node[color=b6-, above] at (1.1, 1.) {$6_-$};
  \end{tikzpicture}
  \hfill\null

  \vspace{-2ex}
  \caption{Non constant radial solutions
    ($N = 3$, $f = f_2$, $R = 4$, $\epsilon \to 0$).}
  \label{fig:f2:rad:epsilon->0}
\end{figure}

\bibliographystyle{plain}
\bibliography{neumann}

\end{document}